  \theoremstyle{plain}
\newtheorem{theorem}{Theorem}[section]
\newtheorem{proposition}[theorem]{Proposition}
\newtheorem{notation}[theorem]{Notation}
\newtheorem{conjecture}[theorem]{Conjecture}
\newtheorem{question}[theorem]{Question}
\newtheorem{lemma}[theorem]{Lemma}
\newtheorem{problem}[theorem]{Problem}
  \theoremstyle{definition}
  \newtheorem{remark}{Remark}[section]
\setlist{nolistsep}
\newcommand{\N}{\mathbb{N}}
\newcommand{\Z}{\mathbb{Z}}
\newcommand{\Q}{\mathbb{Q}}
\newcommand{\R}{\mathbb{R}}
\newcommand{\C}{\mathbb{C}}
\newcommand{\K}{\mathcal{K}}
\newcommand{\hz}{\hat{Z}} 
\newcommand{\Ks}{\K\setminus \Sigma}
\newcommand{\dni}{\Delta^{n+1}_i}
\newcommand{\4}{4\!\times\! 4}
\newcommand{\x}{3}
\newcommand{\y}{3}
\renewcommand{\a}{3}
\renewcommand{\b}{-2}
\newcommand{\cman}{3}
\renewcommand{\d}{6}
\newcommand{\e}{-20}
\newcommand{\V}{\mathcal{V}}
\newcommand{\W}{\mathcal{W}}
\newcommand{\hs}{\hat{\Sigma}}
\newcommand{\A}{\mathcal{A}}
\newcommand{\D}{\mathcal{D}}
\newcommand{\uZ}{\underline{Z}}
\newcommand{\Dm}[1]{{\D_m}^{\!\!\!\!#1}
}
\newcommand{\Dmt}[1]{{\tilde{\D}_m}^{\hspace*{0.5pt}#1}
}
\newcommand{\qtcat}{\raisebox{-0.25em}{\labellist
\small\hair 2pt
\pinlabel $+$ at 190 63
\endlabellist \includegraphics [scale=0.1]{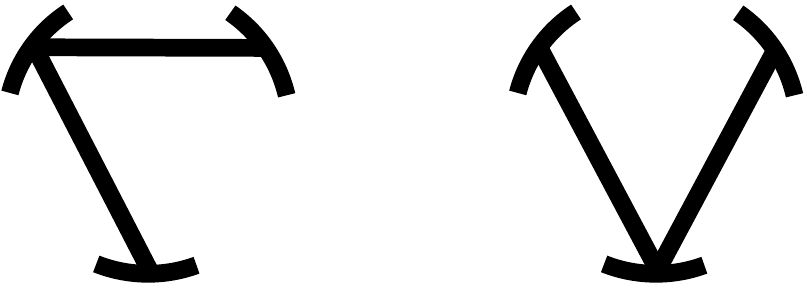}}}
\newcommand{\qtcatt}{\raisebox{-0.25em}{\labellist
\small\hair 2pt
\pinlabel $+$ at 190 63
\endlabellist \includegraphics [scale=0.1]{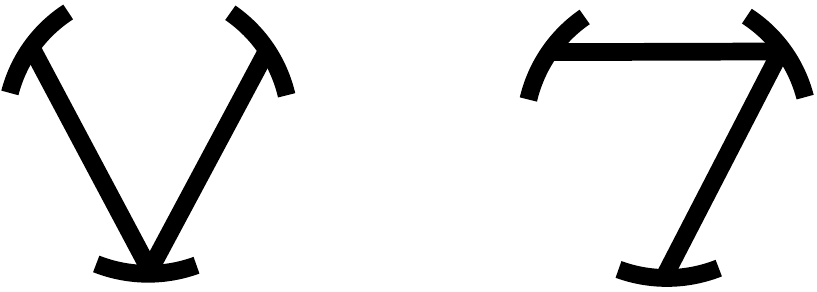}}}
\renewcommand{\Z}{\mathbb{Z}}
\renewcommand{\R}{\mathbb{R}}
\renewcommand{\S}{\mathbb{S}}
\newcommand{\Ker}{\operatorname{Ker}}
\renewcommand{\Im}{\operatorname{Im}}
\title{A Kontsevich integral of order $1$}
\author{Arnaud Mortier} 
\newcommand{\cgraph}{\raisebox{-0.75em}{\labellist
\small\hair 2pt
\pinlabel $1$ at 5 90
\pinlabel $2$ at 5 5
\pinlabel $3$ at 95 5
\pinlabel $4$ at 95 90
\endlabellist \includegraphics[height=2.5em]{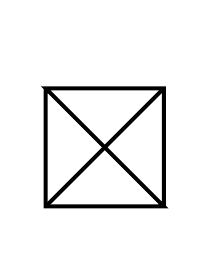}}}
\newcommand{\lmbn}{\raisebox{-0.75em}{\labellist
\small\hair 2pt
\pinlabel $+$ at 170 42
\pinlabel $i$ at 30 -19
\pinlabel $j$ at 73 -19
\pinlabel $k$ at 116 -16
\pinlabel $i$ at 230 -19
\pinlabel $j$ at 273 -19
\pinlabel $k$ at 316 -16
\endlabellist \includegraphics [height=2.7em]{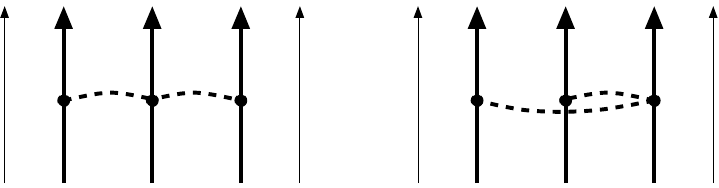}}}
\newcommand{\lmbnn}{\raisebox{-0.75em}{\labellist
\small\hair 2pt
\pinlabel $-$ at 170 42
\pinlabel $i$ at 30 -19
\pinlabel $j$ at 73 -19
\pinlabel $k$ at 116 -16
\pinlabel $i$ at 230 -19
\pinlabel $j$ at 273 -19
\pinlabel $k$ at 316 -16
\endlabellist \includegraphics [height=2.7em]{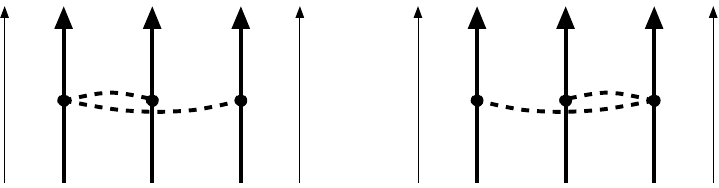}}}
\newcommand{\cgraphbis}{\raisebox{-0.75em}{\labellist
\small\hair 2pt
\pinlabel $1$ at 9 92
\pinlabel $2$ at 49 -3
\pinlabel $3$ at 93 92
\endlabellist \includegraphics[height=2.5em]{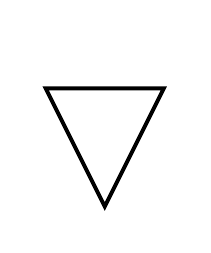}}}
\newcommand{\cgraphter}{\raisebox{-0.75em}{\labellist
\small\hair 2pt
\pinlabel $1$ at 9 92
\pinlabel $2$ at 49 -3
\pinlabel $3$ at 93 92
\pinlabel $4$ at 139 92
\pinlabel $5$ at 179 -3
\pinlabel $6$ at 223 92
\endlabellist \includegraphics[height=2.5em]{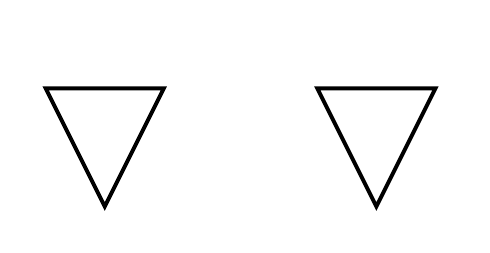}}}
\newcommand{\fTone}{\raisebox{-0.5em}{\labellist
\small\hair 2pt
\pinlabel $+$ at 72 31
\endlabellist \includegraphics[height=1.5em]{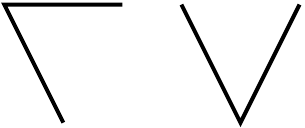}}}
\newcommand{\fTtwo}{\raisebox{-0.5em}{\labellist
\small\hair 2pt
\pinlabel $+$ at 72 31
\endlabellist \includegraphics[height=1.5em]{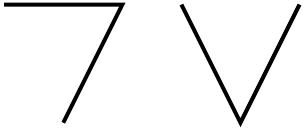}}}
\newcommand{\fTexample}{\raisebox{-0.5em}{\labellist
\small\hair 2pt
\pinlabel $+$ at 363 29
\pinlabel $+$ at 165 29
\pinlabel $+$ at 562 29
\endlabellist \includegraphics[height=1.5em]{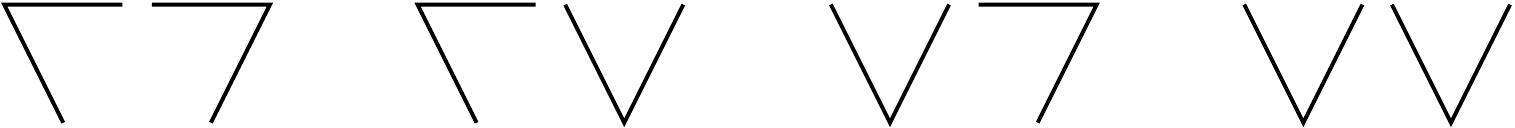}}}
\newcommand{\Span}{\operatorname{Span}}
\newcommand{\Id}{\operatorname{id}}
\newcommand{\pr}{\operatorname{pr}}
\newcommand{\rot}{\operatorname{rot}}
\newcommand{\lk}{\operatorname{lk}}
\newskip\stdskip                      
\begin{document}
\date{}
\maketitle
\begin{abstract}    
\footnotesize
We define a $1$-cocycle in the space of long knots that is a natural generalization of the Kontsevich integral seen as a $0$-cocycle. It involves a $2$-form that generalizes the Knizhnik--Zamolodchikov connection. We show that the well-known close relationship between the Kontsevich integral and Vassiliev invariants 
(via the algebra of chord diagrams and $1$T-$4$T relations) 
is preserved between our integral and Vassiliev $1$-cocycles, via a change of variable similar to the one that led Birman--Lin to discover the $4$T relations. We explain how this construction is related to Cirio--Faria Martins' categorification of the Knizhnik--Zamolodchikov connection.
\end{abstract}

\begin{center}\footnotesize 
\textit{To Joan S Birman and Xiao-Song Lin}
\end{center}

\tableofcontents
\bigskip

The Kontsevich integral is a knot invariant that simultaneously realizes all of the Vassiliev invariants \cite{Kontsevich, BarNatanVKI,VassilievBook}. The relationship between the two frameworks is a very intimate bond that is spectacularly revealed in light of an algebra of chord diagrams that plays a central role in both constructions. We owe the proof that the Vassiliev settings can be reduced to the defining relations of that algebra to Birman--Lin \cite{BirmanLin}. Now despite the massive momentum brought forward to try and understand these invariants, it seems to have been left aside that Vassiliev invariants are at level $0$ of a whole cohomology theory of the topological moduli space of knots. Indeed, since Vassiliev~\cite{VassilievBook} and Hatcher~\cite{HatcherTopologicalmoduli}  (followed by Budney~\cite{BudneyHomotopyType} and Budney--Cohen~\cite{BudneyHomology}) laid the groundwork for the study of the topology of the space of knots, relatively few attempts have been made to build actual realizations of $1$-cocycles or higher invariants: see \cite{VassilievTT2},  
\cite{Sakai1, Sakai2},  \cite{MortierCADS, FT1cocycles}, \cite{FiedlerQuantum1cocycles, FiedlerSingularization, FiedlerPolyCocy}, \cite{Longoni, PelattSinha}.

In light of the bond between the Kontsevich theory and Vassiliev knot invariants, it is only natural to expect that the Kontsevich integral will naturally generalize to higher dimensional invariants. The purpose of this article is to build a $1$-cocycle from that perspective. 

There was one earlier attempt to build $1$-cocycles by means of integrals: Sakai \cite{Sakai1, Sakai2} uses configuration space integrals building on methods developed by Bott--Taubes \cite{BottTaubes}, and further by Cattaneo--Cotta-Ramusino--Longoni \cite{CattaneoCottaLongoni} to construct Vassiliev cohomology classes for knots in higher dimensional Euclidean spaces. Topologically, Sakai's approach  uses a generalization of the linking number of a two-component link (where every pair of points contributes) rather than a two-component braid (where only pairs of points at the same altitude contribute, as in the present work).

The present construction can be seen as a realization of Cirio and Faria Martins' categorification of the Knizhnik--Zamolodchikov connection \cite{CirioFaria}. Part of a dictionary is presented briefly at the end of Appendix~\ref{Appen:1}. This aspect shall be developed in a forthcoming paper joint with Faria Martins.

The paper is organized as follows:

Section~\ref{sec:0} contains an account of Vassiliev's cohomology theory for long knots, followed by the settings of the Kontsevich integral. Several ways to think of the crucial four-term ($4$T) relations are given along the way.

In Section~\ref{sec:A}, we set up the target space $\A^1$ of our $1$-cocycle, which is spanned by some degenerate version of ordinary chord diagrams, subject to a number of relations that can be interpreted as \enquote{higher $4$T relations}.

In Section~\ref{sec:leaf} is defined a prototype version of our $1$-cocycle in the space of Morse knots---that is, essentially, knots with a given finite number of \enquote{caps and cups} with respect to an altitude function. The situation is reminiscent of how the Kontsevich integral is naturally defined first on Morse knots before one can deal with \enquote{Reidemeister I moves}. We end the section by investigating the first elementary properties of the cocycle, which will be essential in the next section.

In Section~\ref{sec:correctZ1}, we extend the definition of our cocycle to the whole space of long knots, using essentially the same correction term as the Kontsevich integral.
We then investigate further its functorial properties, showing along the way that our target space $\A^1$ is \enquote{almost} a bimodule over the target space of the Kontsevich integral.

In Section~\ref{sec:Vass}, we introduce the notion of a weight system of order $1$ and define the change of variable that will turn the set of relations defining Vassiliev's $1$-cocycles into the exact same set of relations that defines $\A^1$. The situation is analogous to how the $4$T relations show up in both Vassiliev and Kontsevich's frameworks. Although we cannot formally prove that our invariant is a Vassiliev $1$-cocycle, we show in Subsection~\ref{sec:rot} that its degree $3$ part can be explicitly evaluated on a cycle that is canonically defined in each path-component of the space of knots (that is, for each knot type), and we identify the resulting knot invariant as the first non-trivial Vassiliev invariant---the coefficient of \,\raisebox{-.2cm}{\includegraphics[scale= 0.3]{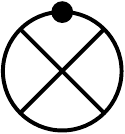}} in the Kontsevich integral, known as the Casson invariant.

Section~\ref{sec:open} lists some open questions raised by our construction. 

\subsection*{Acknowledgements}
I wish to thank the referees for their very careful reading and suggestions,  which substantially improved the readability. I am grateful to Joan Birman for her moral support and suggestions over the last year of writing this paper; to Victoria Lebed for her support and many fruitful discussions; to Jo\~ao Faria Martins for extremely valuable comments the scope of which go beyond this paper; and to Victor A. Vassiliev for his confirmation of a typo in \cite{VassilievCalc}. 

\section{Some background}\label{sec:0}

\subsection{The space of knots and Vassiliev's cohomology}\label{sec:VC}


A \textit{long knot} is an embedding $\R\to \R^3$ that coincides with $t\mapsto (0,0,t)$ outside a compact subset of $\R$. Most research in knot theory ignores the parametrization and focuses on isotopy classes of knots---which often brings down the research to combinatorial issues. It may not be obvious at first glance what added value can arise from dealing with parametrizations. First, it allows one to see the set of all knots as a topological space, and consider higher order invariants (where usual isotopy invariants of knots are $0$-cohomology classes). Second, thanks to a number of structural observations added to technical brilliancy, Vassiliev \cite{Vassiliev1990} was able to define a degree of complexity on such cohomology classes, as well as a systematic method to find those of finite degree. Let us outline these ideas.

Following Vassiliev, let $\K$ denote the topological space made of all smooth maps $\R\to \R^3$ that coincide with $t\mapsto (0,0,t)$ outside a compact subset of $\R$, endowed with the topology of uniform convergence. The subset of elements of $\K$ which are not embeddings is the \textit{discriminant}, denoted by $\Sigma$. The goal is to understand $\K\setminus \Sigma$, the space of knots.

The first observation is that $\K$ is a contractible space (it deformation retracts on the unknot), so that Alexander duality applies: up to some subtleties\footnote{Constant $0$-cocycles are not linking forms, as a linking form vanishes at infinity---that is the only exceptional case. Also, $\K$ is infinite-dimensional, so the whole construction requires finite-dimensional approximations together with a stabilization theorem.}, any cohomology class in $\Ks$ arises as the linking form with some cycle in $\Sigma$, and there is a natural isomorphism between the corresponding (co)homology groups.

Second, observe that elements of $\Sigma$ are singular maps, and such maps can be sorted according to how singular they are---via the codimension of the set of maps with a similar pattern of singularities. This makes $\Sigma$ into a stratified space; for instance, the top-dimensional stratum is the set of all maps with exactly one double point ($K(a)=K(b)$ with $a\neq b$) and a nowhere vanishing derivative. Going through such a stratum is commonly described as performing a \textit{crossing change} on a knot. 

The degree of complexity of a cohomology class is defined via dual cycles in $\Sigma$, by investigating how these cycles interact with strata of any dimension. Vassiliev's method uses a topological blow-up: expand every stratum by a cartesian product with an artificial simplex whose dimension increases with the stratum's codimension, so that there is room for a chain of any dimension to live in any stratum. The resulting space $\hs$ is called the \textit{resolved discriminant}. Now a chain that is a cycle in $\Sigma$ may not be a cycle in $\hs$: pieces that used to fit together may now lie far apart due to the blow-up---see Figure~\ref{fig:resolution}. If one can complete the chain back into a cycle in $\hs$, using only cells of depth\footnote{The depth index will not be detailed here, but it can be roughly understood by Figure~\ref{fig:resolution}: the cell labelled $e$ lies one level deeper than the other cells on the picture.} less than some integer $n$, then the original cycle and its dual cohomology class are said to have \textit{complexity less than} $n$. In this case, the deepest piece of the cycle is called a \textit{principal part} of the cohomology class.

In other words, (co)homology classes of finite complexity are those that arise from the spectral sequence induced by the filtration of $\hs$ by the depth index, and a principal part is a relative cycle from the first sheet of the sequence.
 
\begin{figure}[ht!]
\tdplotsetmaincoords{70}{300}
\begin{center}
\begin{tikzpicture}[tdplot_main_coords,font=\sffamily, scale=0.4]
  \draw[fill=black, opacity = 0.3] (\x-\d,\x-\e,0)  -- (\x-\d,-\x-\e,0)  -- (-\x-\d,-\x-\e,0)  -- (-\x-\d,\x-\e,0) -- cycle;
  \draw[fill=black, opacity = 0.3, pattern = horizontal lines] (0-\d,\x-\e,0)  -- (0-\d,-\x-\e,0)  -- (-\x-\d,-\x-\e,0)  -- (-\x-\d,\x-\e,0) -- cycle;
  \draw[fill=black, opacity = 0.3] (0-\d,\x-\e,\x)  -- (0-\d,\x-\e,-\x)  -- (0-\d,-\x-\e,-\x)  -- (0-\d,-\x-\e,\x) -- cycle;
\draw[fill=black, opacity = 0.3, pattern = horizontal lines] (0-\d,\x-\e,\x)  -- (0-\d,\x-\e,0)  -- (0-\d,-\x-\e,0)  -- (0-\d,-\x-\e,\x) -- cycle;
   \draw (0-\d,\x-\e,0)  -- (0-\d,-\x-\e,0);

\draw[fill=black, opacity = 0.3] (\y-\a,\y-\b,0-\cman)  -- (\y-\a,-\y-\b,0-\cman)  -- (-\y-\a,-\y-\b,0-\cman)  -- (-\y-\a,\y-\b,0-\cman) -- cycle;
  \draw[fill=black, opacity = 0.3, pattern = horizontal lines] (0-\a,\y-\b,0-\cman)  -- (0-\a,-\y-\b,0-\cman)  -- (-\y-\a,-\y-\b,0-\cman)  -- (-\y-\a,\y-\b,0-\cman) -- cycle;
  \draw[fill=black, opacity = 0.3] (0+\a,\y+\b,\y+\cman)  -- (0+\a,\y+\b,-\y+\cman)  -- (0+\a,-\y+\b,-\y+\cman)  -- (0+\a,-\y+\b,\y+\cman) -- cycle;
\draw[fill=black, opacity = 0.3, pattern = horizontal lines] (0+\a,\y+\b,\y+\cman)  -- (0+\a,\y+\b,0+\cman)  -- (0+\a,-\y+\b,0+\cman)  -- (0+\a,-\y+\b,\y+\cman) -- cycle;
   \draw (0+\a,\y+\b,0+\cman)  -- (0+\a,-\y+\b,0+\cman);
   \draw (0-\a,\y-\b,0-\cman)  -- (0-\a,-\y-\b,0-\cman);

\draw[fill=black, opacity = 0.2] (0-\a,\y-\b,0-\cman)  -- (0-\a,-\y-\b,0-\cman) -- (0+\a,-\y+\b,0+\cman) -- (0+\a,\y+\b,0+\cman) -- cycle;

\node[] at (-15,15) {$d$};
\node[] at (-8.5,23.7) {$a$};
\node[] at (-4,16) {$b$};
\node[] at (2.6,25) {$c$};

\node[] at (1,4) {$e$};

\node[] at (18.6,7) {$c$};
\node[] at (1,-5) {$d$};
\node[] at (-12.5,1.6) {$a$};
\node[] at (-8,-6.1) {$b$};

\end{tikzpicture}
 
\end{center}
\caption{On the left, top-dimensional strata in $\Sigma$ ($1$-codimensional in $\K$), corresponding to some sets of maps with one double point, meet \enquote{transversely} at a set of maps with two double points. Going from area $a$ to area $b$ (or $c$ to $d$) amounts to performing a crossing change on a singular knot, away from its double point. Part of a cycle in $\Sigma$ is depicted as a hatched area (where the weights $a$ and $c$ are equal, and $b=d=0$).\newline
On the right, in the resolved discriminant, the chain that was a cycle in $\Sigma$ now has boundary. To try and make it a cycle again, one needs to acknowledge the use of higher codimensional strata---think of a toll bridge---here with a local weight $e=a-b=c-d$.}\label{fig:resolution}
\end{figure}

\subsubsection{Example: usual knot invariants, and the four-term relation seen from the outside}\label{sec:exuki}
Let $v$ be a knot invariant with values in $\Q$ (or any abelian group)---in other words $v\in H^0(\K\setminus \Sigma,\Q)$. The Alexander dual of $v$ is a cycle $\sigma$ of maximal dimension in $\Sigma$, with which $v$ is the linking form (up to a constant invariant). The local weights that define $\sigma$ can therefore be computed as the value by which $v$ jumps when crossing a top-dimensional stratum of $\Sigma$. This is the meaning of the \textit{derivative formula}: $$v'\left({\raisebox{-0.4em}{\includegraphics[scale=0.12]{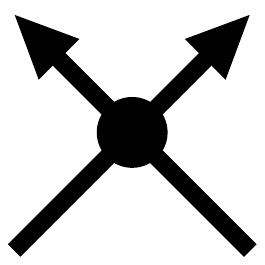}}}\right)=v\left({\raisebox{-0.4em}{\includegraphics[scale=0.12]{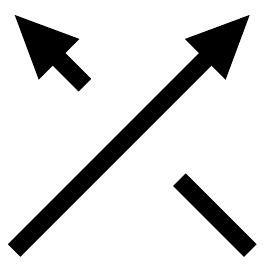}}}\right)-v\left({\raisebox{-0.4em}{\includegraphics[scale=0.12]{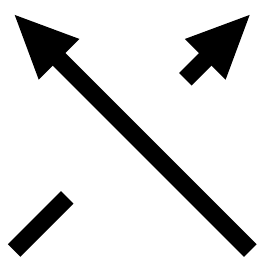}}}\right)$$
where the sign convention for positive and negative crossings is to be thought of as a co-orientation of the stratum in between---note that this convention depends only on the choice of an orientation of $\R^3$ and does not require one to consider a projection $\R^3\to \R^2$.

Now to see whether we have an invariant of finite complexity, we compute the weights given to higher and higher codimensional strata as displayed in Figure~\ref{fig:resolution}, where the equality $e=a-b=c-d$ can be written as $$v^{(2)}\left({\raisebox{-0.4em}{\includegraphics[scale=0.12]{double}\includegraphics[scale=0.12]{double}}}\right)=v'\left({\raisebox{-0.4em}{\includegraphics[scale=0.12]{double}\includegraphics[scale=0.12]{poscross}}}\right)-v'\left({\raisebox{-0.4em}{\includegraphics[scale=0.12]{double}\includegraphics[scale=0.12]{negcross}}}\right)=v'\left({\raisebox{-0.4em}{\includegraphics[scale=0.12]{poscross}\includegraphics[scale=0.12]{double}}}\right)-v'\left({\raisebox{-0.4em}{\includegraphics[scale=0.12]{negcross}\includegraphics[scale=0.12]{double}}}\right).$$
The derivative formula is thus straightforwardly extended to singular knots with arbitrarily many double points. 

%

For an invariant to be of finite complexity, one of its higher derivatives must vanish. In this case, the weights of the strata of knots with $n$ double points (where $v^{(n+1)}=0$) are invariant under crossing change, and therefore only depend on the relative order in which double points are met along the knot. A collection of such strata connected to each other via crossing changes is represented by an \textit{ordinary chord diagram}: a real line (or a circle with a dot at infinity) enhanced with finitely many disjoint \textit{chords} (pairs of distinct points), altogether regarded up to the action of $\operatorname{Homeo}_+(\R)$. The purpose of a chord here is to point at the two preimages of a double point---see Figure~\ref{fig:exCD}. The last non-zero derivative of a $0$-cocycle can therefore be entirely described by a formal linear combination of such diagrams, called the \textit{weight system} of the cocycle. 

\begin{figure}[h!]
\centering
\includegraphics[scale=0.336]{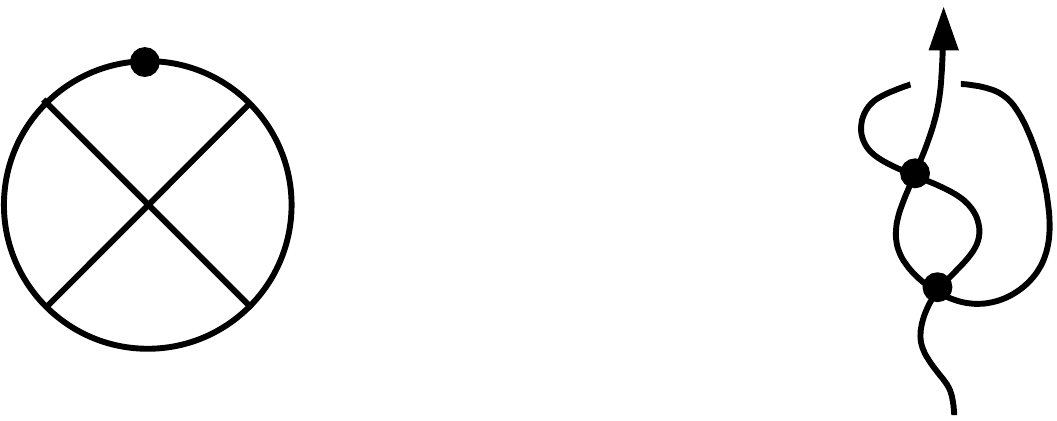}
\caption{On the left, a chord diagram that represents the codimension $2$ stratum of all singular knots with exactly two double points, met in the order $1,2,1,2$ along the knot, and a nowhere vanishing derivative. On the right is an example of such a singular knot. This chord diagram on its own happens to be the weight system of a cocycle known as the Casson invariant---the coefficient of the quadratic term in the Conway polynomial.}\label{fig:exCD}
\end{figure}
We now investigate two properties which characterize weight systems, as it will turn out that any combination of diagrams satisfying these  properties does arise from some $0$-cocycle---see Sections~\ref{sec:4Tin}~and~\ref{sec:KI}.
First, the weight of a diagram is zero whenever it has an \textit{isolated chord} (a chord with an empty arc of the base circle between its endpoints). This is because the two singular knots obtained by resolving an isolated double point are isotopic. This property is called the \textit{one-term relation} ($1$T).  Consider now four singular knots which differ only as displayed in Figure~\ref{fig:4Tout}. Applying the derivative formula once for each shows that there is a relation between the weights of the four corresponding strata. When it comes to a weight system, this can be written in terms of chord diagrams as in Figure~\ref{fig:4Tdiag}. This is the \textit{four-term relation} ($4$T).

\begin{figure}[h!]
\centering
\includegraphics[scale=0.3]{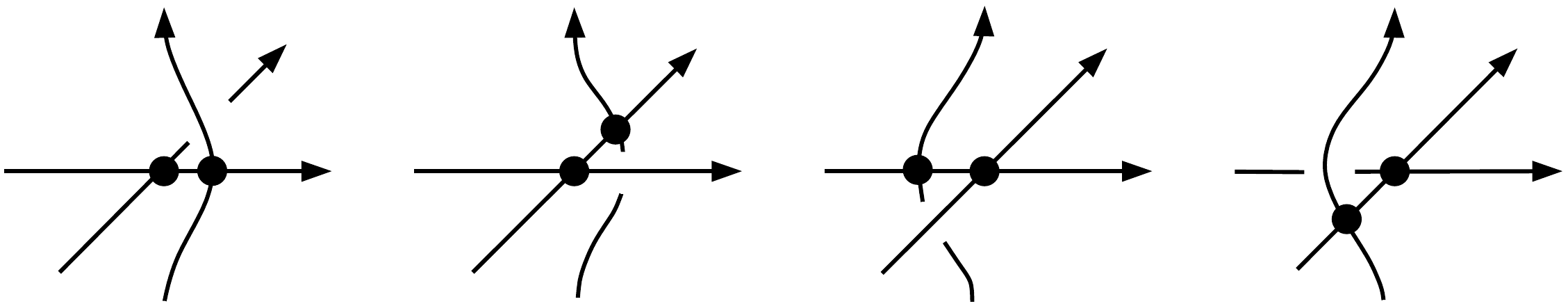}
\caption{Four singular knots that are identical outside the visible part. Any invariant's derivative, at any order, assumes values that are related on four such knots.}\label{fig:4Tout}
\end{figure}

\begin{figure}[h!]
\centering
\labellist
\small\hair 2pt
\pinlabel $-$ at 167 64
\pinlabel $+$ at 366 64
\pinlabel $-$ at 565 64
\pinlabel $=0$ at 764 64
\endlabellist\includegraphics[scale=0.4]{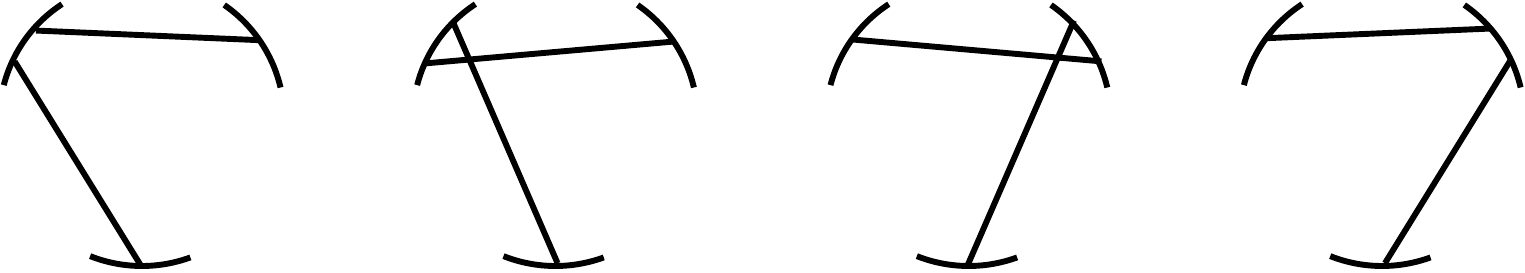}
\caption{These four chord diagrams are identical outside the visible parts. Here the point at infinity can be in any of the three unseen arcs of the circle. For now, this relation is a shorthand for saying that the weight system of a $0$-cocycle is orthogonal to all such linear combinations for the Kronecker pairing. When it comes to the settings of the Kontsevich integral, which are dual to Vassiliev's, this relation will actually mean that the linear combination on the left is set to zero in the algebra of chord diagrams.}\label{fig:4Tdiag}
\end{figure}

\subsubsection{The four-term relation seen from the inside}\label{sec:4Tin}

Here we present the $4$T relation from a different perspective, as it was originally discovered by Birman--Lin in \cite{BirmanLin}, where they show that this relation entirely yields the first sheet of Vassiliev's spectral sequence in cohomological degree $0$.

One very useful feature of the resolved discriminant $\hs$ is that it is built in such a way that the artificial cells themselves can be naturally represented by some kind of chord diagrams, where one can read directly

\begin{itemize}
\item the \textit{complexity} of the cells, which measures how \enquote{deep} they are in $\hs$;
\item the topological orientation of the cells, as well as their boundary expressed in terms of diagrams as well;
\item their \textit{effective codimension}, by which we mean the degree of the cohomology classes where they could potentially participate in a principal part.
\end{itemize} 

At this point it will help to know more about the artificial simplices that define the cells of $\hs$. The artificial simplex that is used to expand a stratum defined by some chord diagram has one vertex for each chord of the diagram. Thus, an ordinary double point contributes $+1$ to the codimension of a stratum and $+1$ to the dimension of the associated simplex, which overall doesn't affect the dimension of the product. However, 
a stratum of knots with triple points or higher has a much larger codimension in $\K$ than the number of chords needed to define it. To compensate for this and ensure that each stratum is expanded to a dimension no less than that of $\Sigma$, Vassiliev adds chords that bring redundant information (don't change the associated stratum of $\K$) and yet raise the dimension of the simplex. Diagrams with less redundant information are still used, to denote faces at the boundary of the bigger cell.

For instance, the diagram on Figure~\ref{fig:exTri} corresponds to a stratum of codimension $3$ in $\K$, and the extra chord gives the corresponding cell a dimension equal to that of $\Sigma$, which allows it to enter the principal part of a $0$-cocycle. An ordinary chord diagram enhanced with such a triangle, whose vertices are away from the other chords, is called a \textit{$\nabla\!$-diagram}.


\begin{figure}[h!]
\centering
\includegraphics[scale=0.4]{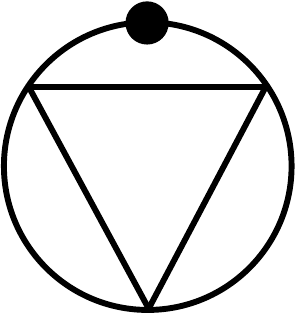}
\caption{This $\nabla\!$-diagram represents the cartesian product of  a $2$-simplex and the stratum of all maps whose only singularity is a triple point. One of its chords contains redundant information, since $K(a)=K(b)$ and $K(b)=K(c)$ already imply $K(a)=K(c)$. This extra chord is there only to raise the dimension of the simplex by $1$. Such a diagram may enter the principal part of a $0$-cocycle, while its faces (obtained by deleting one chord from the triangle) may enter the principal part of a $1$-cocycle.}\label{fig:exTri}
\end{figure}

The depth function on $\hs$ defines a filtration $\Sigma\equiv \hs_1\subset\hs_2\subset\ldots \subset\hs$. The associated spectral sequence computes all cycles of finite complexity, and the relative cycles in the first sheet of this spectral sequence are their potential principal parts. Let us compute this first sheet in cohomological degree $0$. Two kinds of diagrams can enter the principal part of a $0$-cocycle: ordinary chord diagrams (which make up the weight system of the cocycle), and $\nabla\!$-diagrams. 

Vassiliev shows that the boundary of a $\nabla\!$-diagram reduces to the part explained above:
$$\partial_\nabla\left({\raisebox{-0.4em}{\includegraphics[scale=0.12]{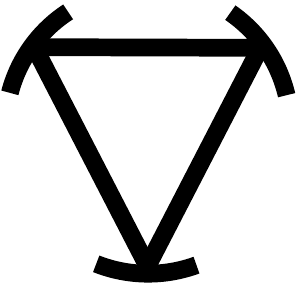}}}\right)={\raisebox{-0.4em}{\includegraphics[scale=0.12]{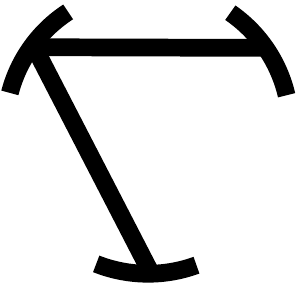}}}-{\raisebox{-0.4em}{\includegraphics[scale=0.12]{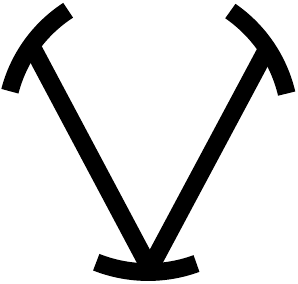}}}+{\raisebox{-0.4em}{\includegraphics[scale=0.12]{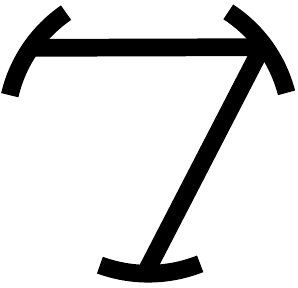}}}$$
The point at infinity is here assumed to be in the top (unseen) arc of the circle for the signs to be correct.

An ordinary chord diagram $D$ has two kinds of boundary pieces. First, 
$$\partial_\bullet D = \sum_{\stackrel{\raisebox{-0.4em}{\includegraphics[scale=0.12]{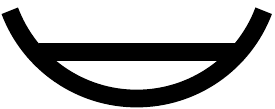}}}{\text{ isolated in $D$}}}{\raisebox{-0.4em}{\includegraphics[scale=0.12]{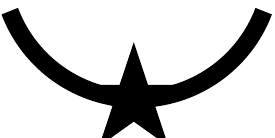}}}$$
corresponds to boundary pieces where an isolated double point tends to become a $1-1$ point where the derivative vanishes. Strata of maps whose derivative vanishes at some point along a given arc are represented by diagrams with a star on that arc. Such pieces of boundary are dead ends---each occurs in the boundary of only one diagram. Therefore, $\Ker \partial_\bullet$ encodes exactly the $1$T relation. 

Second, 
$$\partial_V D = \sum_{\raisebox{-0.4em}{\includegraphics [scale=0.09]{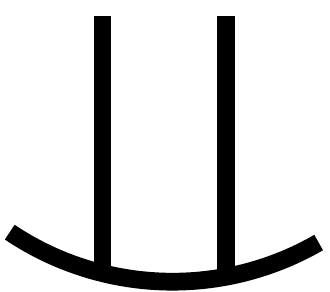}}}{\pm\,  \raisebox{-0.4em}{\includegraphics [scale=0.12]{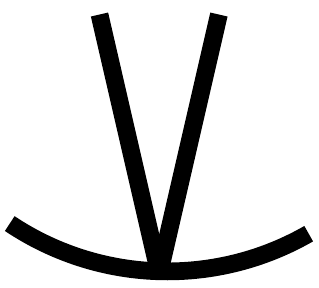}}}$$
where the sum is over pairs of neighboring endpoints of distinct chords in $D$, which may or may not graphically intersect. Such boundary pieces correspond to two double points merging into a triple point.


A linear combination $w$ of ordinary chord diagrams satisfying the $1$T relation can be completed (by $\nabla\!$-diagrams) into a cycle in the first sheet if and only if $\partial_V w$ lies in $\Im \partial_\nabla$. From the definition of $\partial_\nabla$, one sees that $$\Im \partial_\nabla = \Span^\perp\left\lbrace\qtcat , \qtcatt \right\rbrace$$
therefore
$$\partial_V w\in \Im \partial_\nabla \Longleftrightarrow w\in \left[\partial_V^\star \Span\left\lbrace\qtcat , \qtcatt \right\rbrace\right]^\perp$$

Now the direct image of $\Span\left\lbrace\qtcat , \qtcatt \right\rbrace$ by the dual map $\partial_V^\star$ happens to be exactly the subspace spanned by $4$T relators. This shows that the  $1$T and $4$T relations are not only necessary as seen in Subsection~\ref{sec:exuki}, but also sufficient for a linear combination of ordinary chord diagrams to survive in the first sheet of Vassiliev's spectral sequence. This result is due to Birman--Lin~\cite[Lemmas $3.4$ and $3.5$]{BirmanLin}. The main difficulty, actually completely concealed here, was to realize that appropriate orientations of the cells of $\hs$ could be defined so as to make the combinatorics of the boundary maps much simpler---the $4$T relation is not only short, but it is also free from any external data. This result was later improved by Kontsevich, who showed that the spectral sequence degenerates at the first sheet, meaning in particular that $1$T and $4$T relations are necessary and sufficient for a combination of diagrams to actually be the weight system of some $0$-cocycle. We investigate this result in Section~\ref{sec:KI}.

For some purposes, this approach to the $4$T relations is much more powerful than the popular version from Section~\ref{sec:exuki}. In particular, it generalizes to the study of higher cocycles, which is the point of this paper.
%
%
%
%

\subsection{The Kontsevich integral}\label{sec:KI}

To the naive topologist, the Kontsevich integral is a powerful device to capture self-linking information about a knot, and give yet another striking perspective on the $4$T relation and Vassiliev $0$-cocycles, as it encompasses them all. We will stick to this point of view here. A deeper exposition is possible via Drinfeld's associator and quasi-Hopf algebras theory, see~\cite{DrinfeldQHKZ, drinfeld1990quasitri}. Yet another perspective, which is only conjecturally equivalent, is via perturbative Chern--Simons gauge theory---see for instance \cite{labastida1998kontsevich, lescop2002configuration}.

Unless otherwise stated, results from this section are due to Kontsevich~\cite{Kontsevich}. For an excellent and more thorough introduction, we refer to \cite{BarNatanVKI, CDM, LescopK}.

\subsubsection{The case of braids}\label{subsec:KIbraids}
A \textit{braid} is a finite collection of smooth maps $z_k\colon [0,1]\to \C$ for $k=1\ldots p$, called \textit{strands}, such that $z_k(t)\neq z_l (t)$ for any $k\neq l$ and $t\in [0,1]$, and such that $\{z_k(0)\}_{k=1}^ p=\{z_k(1)\}_{k=1}^ p=\{1,\ldots p\}$. A braid with $p$ strands can be regarded as a path in $\C^p \setminus \Delta$, where $\Delta$ is the \textit{fat diagonal}, made of all points in $\C^p$ where at least two coordinates match. Such a path is a loop only if $z_k(0)=z_k(1)$ for all $k$, when the braid is said to be \textit{pure}.

Vassiliev's theory for long knots (Section~\ref{sec:VC}) can be adapted to braids: chord diagrams are here based on $p$ vertical segments labelled from $1$ to $p$, with a horizontal chord for each double point. Weight systems are defined similarly and play the same role, ruled by a natural adaptation of the $4$T relation, see Figure~\ref{pic:4Tbraid}.

\begin{figure}[!ht]
\labellist
\small\hair 2pt
\pinlabel $-$ at 177 87
\pinlabel $=$ at 418 87
\pinlabel $-$ at 658 87
\pinlabel $=$ at 899 87
\pinlabel $-$ at 1140 87
\endlabellist
\centering 
\hspace{5pt}
\includegraphics[scale =0.282]{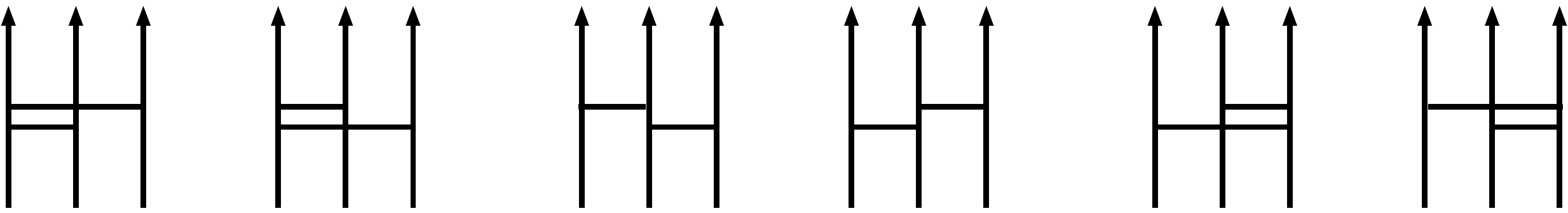}
\caption{The $4$T relations for braids. There can be unseen chords  or vertical segments, but these must be identical in all $6$ diagrams and there can be no chords at an intermediate altitude between the visible ones. These relations are naturally compatible with the $4$T relations for ordinary chord diagrams, under the operation of gluing the top of each vertical segment to the bottom of the next one. In an algebraic framework, one can write these relations as $[H_{ij}+H_{jk},H_{ik}]=0$ for all $i,j,k$, where $H_{ij}$ stands for the diagram with one chord between segments $i$ and $j$, $[,]$ is a commutator, and the multiplication is given by stacking the right diagram on top of the left.}
\label{pic:4Tbraid}
\end{figure}

Given a braid with two strands $z_1$ and $z_2$, the \textit{linking number} $\frac{1}{2 i \pi}\int_0^1 \frac{dz_1-dz_2}{z_1-z_2}$ is an isotopy invariant (because the $1$-form inside is closed). It is an integer when the braid is pure (the algebraic number of times one strand winds around the other) and a half-integer otherwise. It is a complete invariant of $2$-strand braids, as the isotopy class can be recovered from the linking number. 

An isotopy between braids, from the point of view of paths in $\C^p \setminus \Delta$, is simply a path homotopy. A powerful idea to build such homotopy-invariant integrals on paths, extensively described by Chen in  \cite{chen1977}, is to consider \textit{iterated integrals}---that is, where the integration domain is a simplex of the form $0<t_1<\ldots < t_n<1$. 
Thus, to every choice of $n$ pairs of strands given in a particular order, one associates the complex number
\begin{equation} \frac 1 {\left(2i\pi\right)^n}\int_{0<t_1<\ldots < t_n<1}\omega_{i_1, j_1}\wedge\ldots \wedge\omega_{i_n, j_n} \label{eq:int}\end{equation}
where $\omega_{i,j}$ denotes the $1$-form $\frac{dz_i-dz_j}{z_i-z_j}=d\log(z_i-z_j)$ on $\C^p \setminus \Delta$, and where it is understood that each $\omega_{i_k, j_k}$ is pulled back to the parameter simplex $\left\{0<t_1<\ldots < t_n<1\right\rbrace$ via the parametrization of the braid and the projection onto the $k^\text{th}$ coordinate. Each $1$-form measures a linking number, but overall in a nonindependent manner due to the integral bounds. Each choice of $n$ pairs of strands is remembered in the form of a chord diagram based not on a circle but on $p$ vertical segments, with one horizontal chord for each pair; see Figure~\ref{pic:braidex}. 

\begin{figure}[!ht]
\labellist
\small\hair 2pt
\pinlabel $1$ at 285 10
\pinlabel $2$ at 397 10
\pinlabel $3$ at 510 10
\pinlabel $1$ at 967 5
\pinlabel $2$ at 1060 5
\pinlabel $3$ at 1154 5

\pinlabel $1$ at -25 567
\pinlabel $t_2$ at -25 414
\pinlabel $t_1$ at -25 270
\pinlabel $0$ at -25 46

\endlabellist
\centering 
\hspace{5pt}
\includegraphics[width =9.5cm]{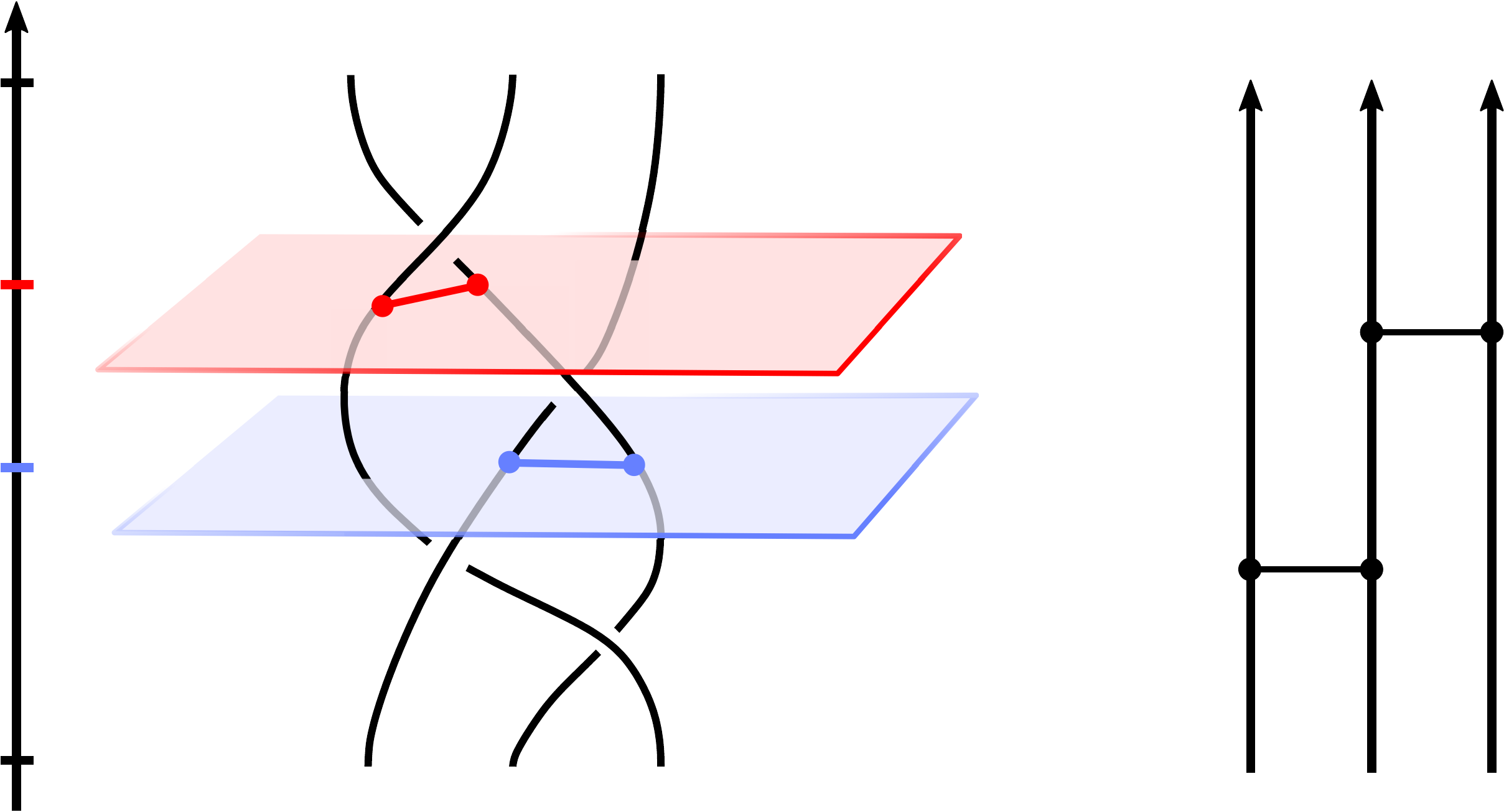}
\caption{On the left, a braid with an ordered choice of two pairs of strands. The variables are bound to the condition $t_1<t_2$.
This choice yields the complex number $\frac{1}{(2i\pi)^2}\int_{0<t_1<t_2<1}d\log(z_1-z_2)\wedge d\log(z_2-z_3)$, which is then stuck to the chord diagram displayed on the right to remember what choice led to it.}
\label{pic:braidex}
\end{figure}

The Kontsevich integral (for braids) is essentially the collection of all these iterated integrals, together with an explicit description of what linear combinations of them yield isotopy invariants. Namely, a linear combination of integrals yields an invariant if and only if the corresponding combination of diagrams satisfies the $4$T relation. Invariants obtained this way turn out to be exactly the collection of all $\Q$-valued Vassiliev invariants of braids---which together form a complete invariant (a result of Kohno  \cite{kohno1987monodromy} proved independently by Bar-Natan \cite{bar1995vassiliev} and later by Papadima over $\Z$ \cite{papadima}).

\subsubsection{Technical details and the case of long knots}\label{sec:techKI}

Let us call a \textit{Morse knot} any long knot $K:\R\rightarrow \R^3\simeq \C_z\times\R_t$ for which the projection $\C_z\times\R_t \rightarrow \R_t$ induces a Morse function. For a given point in $\R^3$, the value of the projection is called its \textit{altitude}. We call a \textit{strand} of the knot the restriction of $K$ to a maximal interval where it has no critical Morse points. By compactness, Morse knots have finitely many strands.

For Morse knots, integrals of the form~\eqref{eq:int} still make sense with a few adjustments. The integration domain has now additional constraints as each variable $t_k$ is limited by the intersection of the lifespans of the two chosen strands. Furthermore, pulling back the $1$-forms $\omega_{i,j}$ yields a $-1$ factor for each $z_i$ or $z_j$ lying on a strand for which the parametrization does not respect the orientation of the $\R_t$ axis. Finally, the chord diagram associated to a choice of $n$ pairs of strands is now an ordinary chord diagram as introduced in Subsection~\ref{sec:exuki}---see Figure~\ref{pic:knotex}.

\begin{figure}[!ht]
\labellist
\small\hair 2pt

\pinlabel $m$ at  -30 140
\pinlabel $t_1$ at -30 250
\pinlabel $t_2$ at -30 368
\pinlabel $t_3$ at -30 457
\pinlabel $M_1$ at -35 529
\pinlabel $M_2$ at -35 575

\endlabellist
\centering 
\hspace{5pt}
\includegraphics[width=10.7cm]{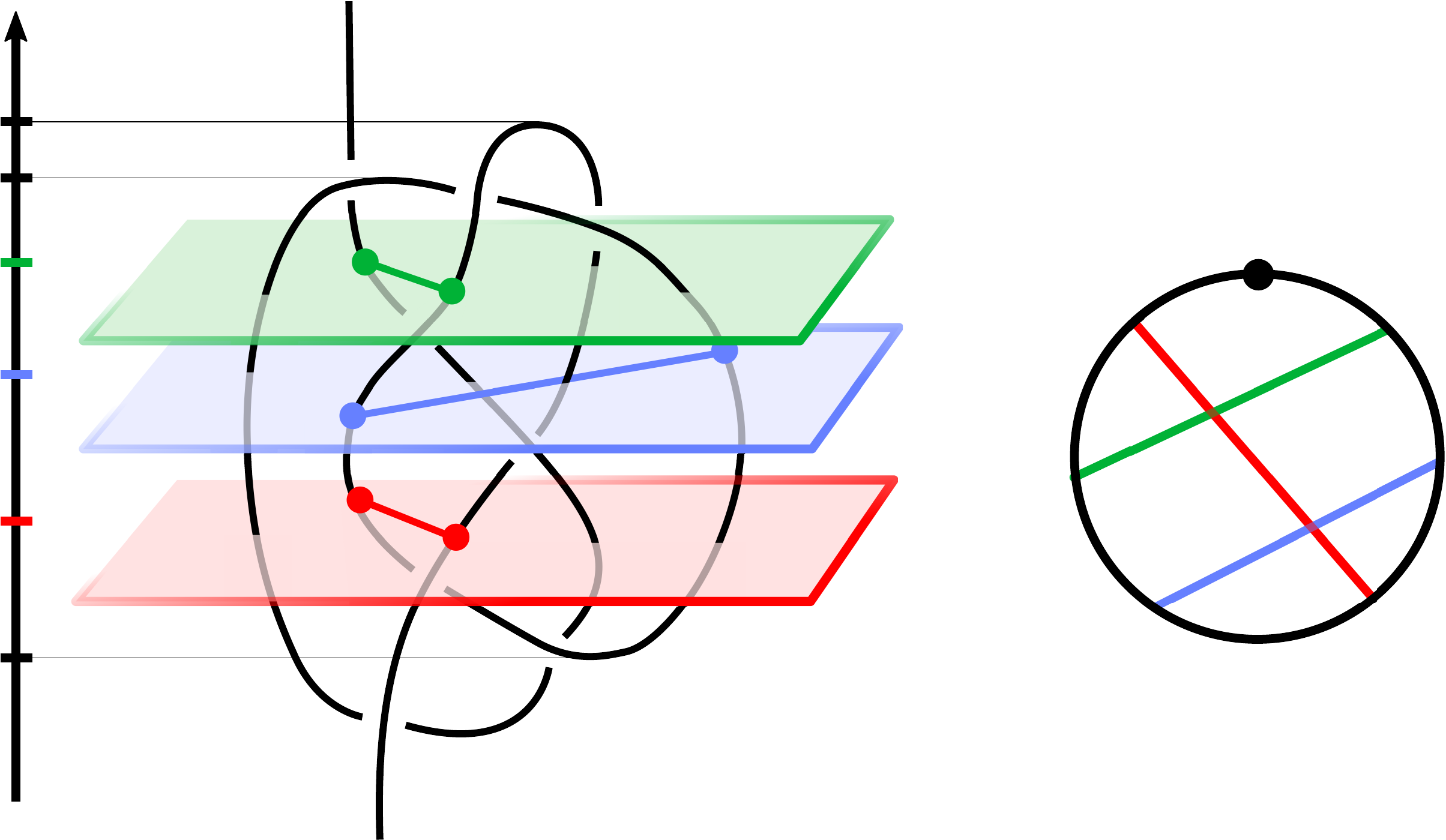}\vspace{5pt}\caption{On the left, a knot with an ordered choice of three pairs of strands. Altitudes are not only bound to the condition $t_1<t_2<t_3$, but also $t_i>m$ for all $i$, as well as $t_2<M_1$ and $t_3<M_2$ in order for the chords to exist.
On the right, the corresponding chord diagram has no isolated chords, which ensures that the integral converges. One can see indeed that when $t_2$ goes near $m$, the second chord brings a singularity $\int \frac{dz}{z}$ with $z$ getting close to $0$, but simultaneously $t_1$ is bound to the interval $(m,t_2)$ whose size tends to $0$.
The colors in the chord diagram are there for the sake of clarity only, they are not part of the structure.}
\label{pic:knotex}
\end{figure}

\subsubsection*{Convergence}

The integral \ref{eq:int} converges in the case of Morse knots as soon as the associated chord diagram has no isolated chords. This is because an isolated chord will in some cases bring a $1$-form $\frac{dz_i-dz_j}{z_i-z_j}$ where $z_i-z_j$ tends to zero with nothing to compensate for it.
When no chords are isolated, such small denominators are always compensated by the short lifespan of some other chords---the integrand may tend to infinity but the volume of the integration domain tends to $0$ faster---see Figure~\ref{pic:knotex}.

The collection of these integrals is gathered as 
\begin{equation}\sum_{n=0}^{\infty} \sum_{\substack{\text{$M$ = admissible}\\ \text{ordered choice of $n$}\\ \text{pairs of strands}}}\left[ \frac {(-1)^{\downarrow(M)}} {\left(2i\pi\right)^n}\int_{\raisebox{-10pt}{$\substack{t_1<\ldots < t_n\\ \text{$t_k\in $ common lifespan}\\ \text{of the two strands}\\ \text{at level $k$}}$}}\omega_{i_1, j_1}\wedge\ldots \wedge\omega_{i_n, j_n}\right]\cdot D_M \label{eq:int2}\end{equation}

where $D_M$ is the ordinary chord diagram yielded by the choice $M$ and \mbox{$\downarrow\!(M)$} stands for the number of times a decreasing strand was chosen. A choice of $n$ pairs is admissible if the lifespans of the two strands in each pair overlap, in a way that is at least partially compatible with the condition $t_1<\ldots < t_n$ (so that the integration domain is non empty when $n>0$), and if the resulting diagram $D_M$ has no isolated chords. By convention, $n=0$ contributes a chord diagram with no chords (with coefficient $1$).

The series \eqref{eq:int2}, which we denote by $\uZ(K)$, lives in $\D^0$, the algebraic completion of $\Span_\C \left\{\text{ordinary chord diagrams}\right\}$. Although it cannot involve diagrams with isolated chords by construction, it is still common to regard $\uZ(K)$ as an element of $\D^0/\left\{1\text{T}\right\}$ since it is ultimately meant to be dual to weight systems.

\subsubsection*{Invariance}

The first outstanding result by Kontsevich is that for any two knots $K$ and $\tilde{K}$ that are isotopic as Morse knots, the difference $\uZ(K)-\uZ(\tilde{K})$ lies in the subspace spanned by the $4$T relators (Figure~\ref{fig:4Tdiag}). In other words, we have a Morse knot invariant $$Z(K)=\left[\uZ(K)\right]\in \A^0=\D^0/\left\{1\text{T}, 4\text{T}\right\}.$$

The proof relies on Stokes' theorem together with a key property of the $1$-forms $\omega_{i,j}=\frac{dz_i-dz_j}{z_i-z_j}$ called \textit{Arnold's identity}, which can be written as:

\begin{equation}\label{Arnold}
\omega_{i,j}\wedge\omega_{j,k} + \omega_{j,k}\wedge\omega_{k,i} + \omega_{k,i}\wedge\omega_{i,j}=0
\end{equation}


Remarkably, $\A^0$ endowed with the natural concatenation of diagrams is an associative and commutative (!) algebra (\cite[Theorem 7]{BarNatanVKI}), and $Z$ is multiplicative with respect to this operation and the connected sum of knots. As a result, one can understand exactly how $Z$ fails to be a classical knot invariant: the addition of two critical points amounts to a connected sum with a \textit{hump}---a trivial knot which we will denote by the symbol $\infty$, see Figure~\ref{pic:hump})---and therefore if we let $c(K)$ denote the number of critical points of $K$, then 
$$\hat{Z}(K)=\frac{Z(K)}{Z(\infty)^{c(K)/2}}$$
defines a classical knot invariant. Note that $Z(\infty)$ is invertible in $\A^0$ because the series \eqref{eq:int2} always starts by the unit (the empty chord diagram) by convention.
\begin{figure}[!ht]
\hspace*{-1cm}\labellist
\small\hair 2pt

\pinlabel $\C_z$ at 548 55
\pinlabel $0$ at 270 110
\pinlabel $\R_t$ at 340 495
\endlabellist
\centering 
\hspace{5pt}
\includegraphics[scale =0.24]{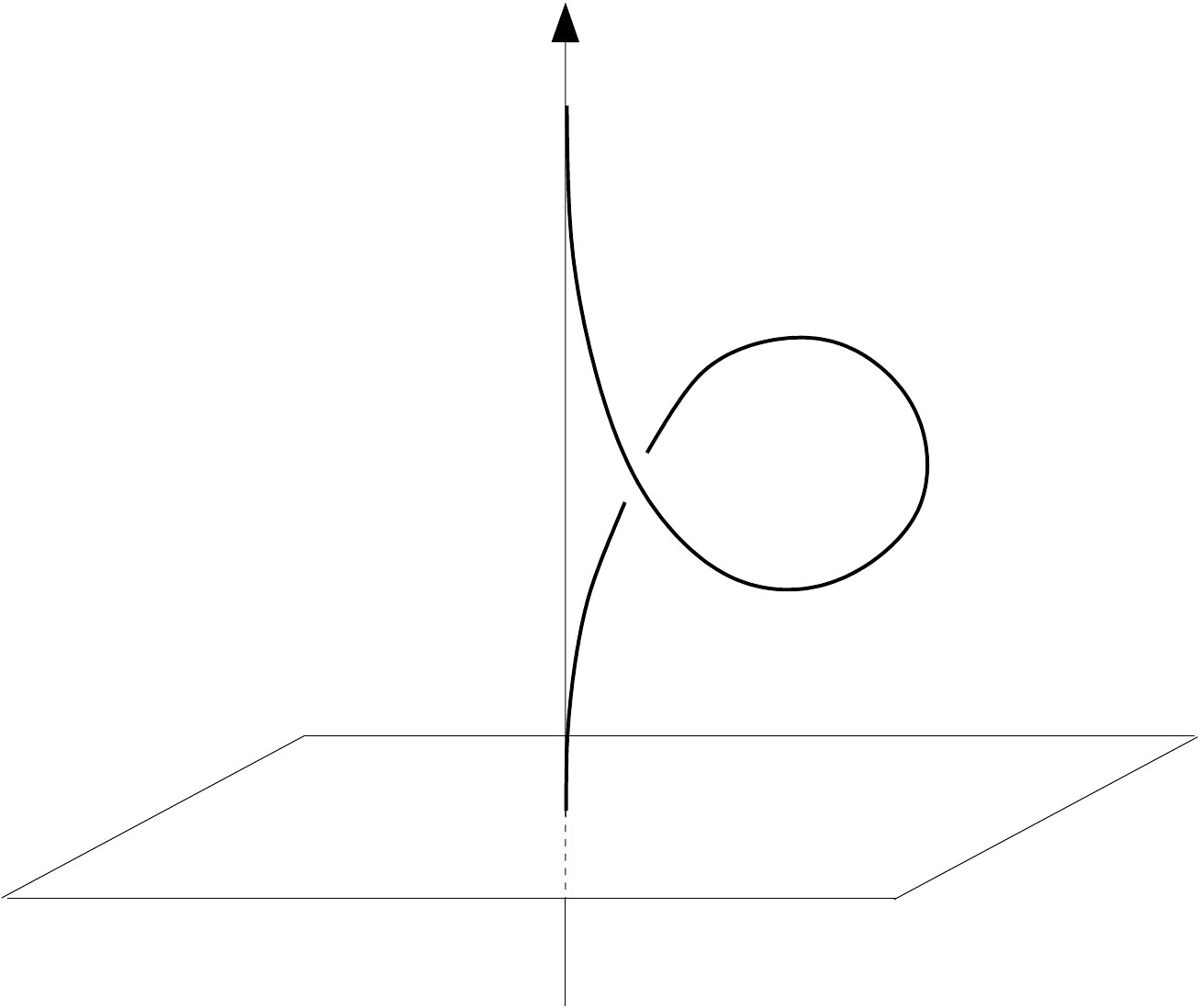}
\caption{The hump, a Morse (un)knot with two critical points}
\label{pic:hump}
\end{figure}

\subsubsection*{Duality with weight systems}

Since Vassiliev--Birman--Lin's weight systems span the orthogonal subspace to $\left\{1\text{T}, 4\text{T}\right\}$ in $\D^0$, they form the dual space to $\A^0$ and may be evaluated as functionals on $\hz(K)$. Kontsevich's major result is then that for any weight system $w$, the $\C$-valued knot invariant $w\left(\hz(\cdot)\right)$ is of finite complexity and has weight system $w$. This implies that in cohomological degree $0$, Vassiliev's spectral sequence degenerates at the first sheet (\enquote{every weight system integrates}). An unpublished result by Kontsevich states that over $\C$ this actually holds in every cohomological degree.

\section{The space of $V$-diagrams $\A^1$}\label{sec:A}

In this section we introduce the target space of our $1$-cocycle. Similarly to $\A^0$ (see Section~\ref{sec:techKI}), it is defined as a quotient of a graded complex vector space freely generated by diagrams which are introduced thereafter:
$$\A^1=\D^1/\Span\left\{1\text{T}, 2\text{T}, \4\text{T}, 16\text{T}, 28\text{T} \right\}.$$

The relations that yield this quotient come from a rewriting of Vassiliev's spectral sequence as explained in Section~\ref{sec:Vass} and Appendix~\ref{Appen:3}, using similar ideas as Birman--Lin \cite{BirmanLin}. They are also essential in the (well-definedness and) cocyclicity of our invariant, as shown in Section~\ref{subsec:cocy}.

A \textit{chord} is an unordered pair of distinct real numbers.
All kinds of chord diagrams that we are about to define are regarded up to positive homeomorphisms of $\R$, and are graded by the number of chords.

An \textit{(ordinary) chord diagram} is a set of finitely many pairwise disjoint chords.

A \textit{$V$-diagram} is the datum of a chord diagram, together with a \enquote{$V$}: three points in $\R$ distinct from each other and from the endpoints of existing chords, together with a choice of two out of the three possible chords to link them. A chord disjoint from the $V$ is called \textit{ordinary}.

A \textit{$V^2$-diagram} is either of two things:
\begin{itemize}
\item a chord diagram with two additional $V$'s disjoint from each other;
\item a chord diagram with four additional distinct points in $\R$, linked by chords which together form a spanning tree of the complete graph on four vertices.
\end{itemize}

\begin{figure}[!ht]
\centering 
\hspace{5pt}
\includegraphics[scale =0.41]{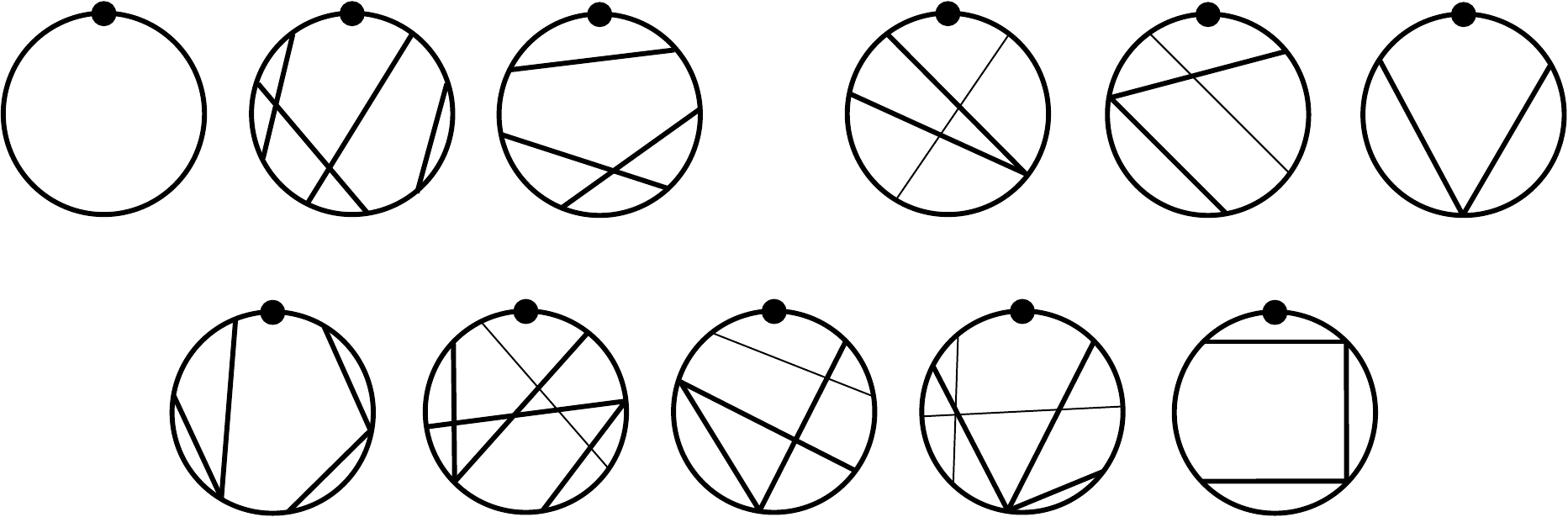}
\caption{Ordinary chord diagrams (top-left), $V$-diagrams (top-right) and $V^2$-diagrams (bottom). Ordinary chords in $V$- or $V^2$-diagrams are drawn thinner for clarity.}
\label{pic:excdvd}
\end{figure}

\begin{remark}
In Vassiliev's seminal paper \cite[Section 2.1]{Vassiliev1990},  $V$-diagrams appear in the boundary of what is defined there as $(3,2,\ldots,2)$-configurations (see the map $\partial_{\nabla}$ recalled here in Section~\ref{sec:4Tin}), while $V^2$-diagrams appear respectively in the faces of $(3,3,2,\ldots,2)$ and $(4,2,\ldots,2)$-configurations.
\end{remark}

The graded completion of the vector space freely generated by $V$-diagrams over $\C$ is denoted by $\D^1$. In the rest of this section, we present the relations that are to be set on $\D^1$, thus defining the quotient $\A^1$.

\subsection{$1$T and $2$T relations}
Recall from Section~\ref{sec:VC} that a chord $\left\lbrace a, b\right\rbrace$ is called isolated when no other chords have endpoints in the interval $(a,b)$. 
A $V$-diagram is set to $0$ when it has an isolated ordinary chord (Figure~\ref{pic:1T2T}a). This is still called the $1$T relation, by analogy with the $1$T relation in $\A^0$.
\begin{remark}
In the settings of the Kontsevich integral, the $1$T relation is sometimes stated in terms of chords that do not intersect other chords, which seems more general than the version with isolated chords. In fact, in $\D^0$ modulo $4$T, the two are equivalent because of \cite[Theorem 7]{BarNatanVKI}\footnote{This theorem implies that up to the $4$T relations, the multiplication of ordinary chord diagrams via concatenation could be just as well defined by cutting one of the two diagrams anywhere along the line and inserting the other diagram there.}. This equivalence is not expected to hold in the present case in $\D^1$. However, be it in $\D^0$ or $\D^1$, neither the good properties of the integral nor Vassiliev's equations actually require more than the version involving isolated chords.
\end{remark}

The $2$T relation affects isolated chords that participate in a $V$. Given a chord diagram and a choice of an endpoint of a chord, there are two ways to attach an isolated chord to this endpoint and form a $V$-diagram. The sum of these two $V$-diagrams is set to $0$ (Figure~\ref{pic:1T2T}b). As usual when several incomplete diagrams are represented within the same equation, it is implied that they are all identical outside the visible area.

\begin{figure}[!ht]
\labellist
\small\hair 2pt

\pinlabel $(\text{a})$ at -10 60
\pinlabel $(\text{b})$ at 360 60
\pinlabel $=0$ at 172 16
\pinlabel $+$ at 591 16
\pinlabel $=0$ at 835 16
\endlabellist
\centering 
\hspace{5pt}
\includegraphics[scale =0.43]{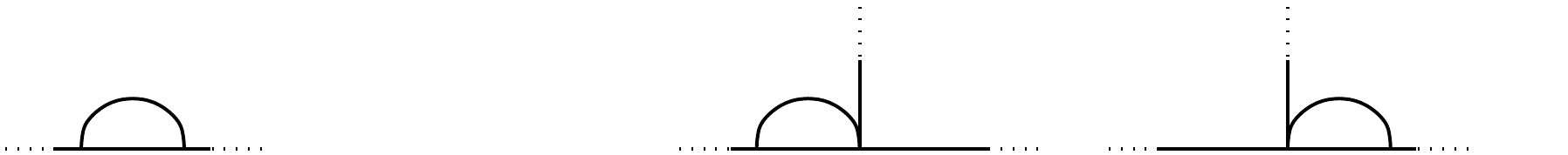}
\caption{$1$T and $2$T relations. The second chord of the $V$ can land anywhere outside the visible area.}
\label{pic:1T2T}
\end{figure}
\subsection{$16$T and $28$T relations}\label{sec:1628}

Let $P$ and $P^\prime$ be two disjoint finite subsets of $\R$, with $P^\prime$ of cardinality $2$, say $P^\prime=\left\lbrace a, b\right\rbrace$. The \textit{linking number} of $P$ and $P^\prime$ is
$$\lk(P,P^\prime)=(-1)^{\hash P\cap [a,b]}.$$

This notion is essential in Birman--Lin's rewriting of Vassiliev's equations in \cite{BirmanLin}. It is also an ingredient of the boundary maps in \cite{MortierCADS} where a combinatorial model is built to calculate part of the cohomology of the space of long knots.

Consider a usual chord diagram and pick four points in $\R\setminus \bigcup \left\lbrace \text{chords}\right\rbrace$, labelled from $1$ to $4$ according to the orientation of $\R$. There are $16$ ways to complete this into a $V^2$-diagram, represented by trees as shown in Figure~\ref{pic:spantrees}.

\begin{figure}[!ht]
\centering 
\hspace{5pt}
\includegraphics[scale =0.4384]{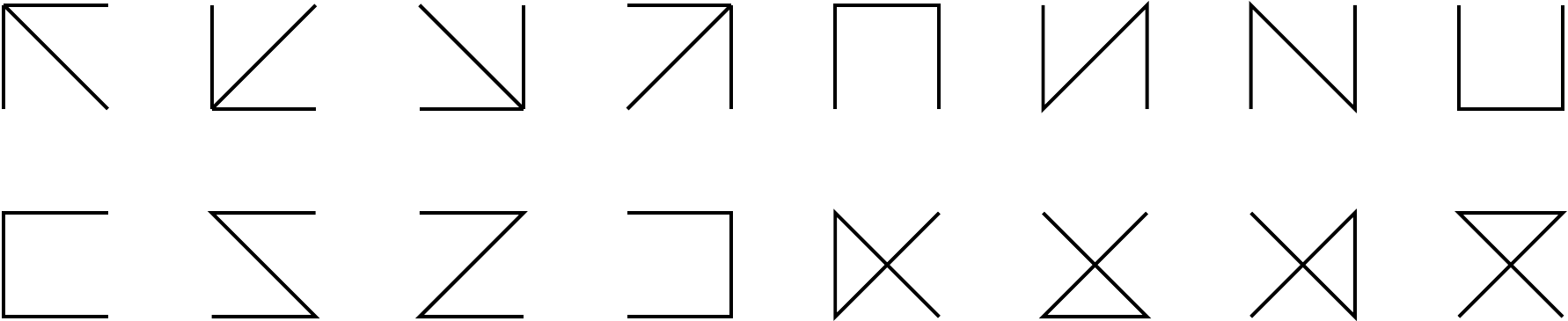}
\caption{The $16$ spanning trees of the complete graph on $4$ vertices. The vertices are labelled as follows: \protect\cgraph}
\label{pic:spantrees}
\end{figure}
By a \textit{desingularization} of a $V^2$-diagram $D$ we mean a $V$-diagram from which one can recover $D$ by shrinking to a point an interval of the base line that contains no endpoints of any chords in its interior.
Each of the $V^2$-diagrams on Figure~\ref{pic:spantrees} can be desingularized in either $6$ ways (for the first four) or $4$ ways (for the remaining twelve), by which the tree is split into a $V$ and an ordinary chord. 

\begin{notation}\label{not:16trees}
By each of the $16$ graphs in Figure~\ref{pic:spantrees} we mean the linear combination of all possible desingularizations of the corresponding $V^2$-diagram, where the coefficient for each summand is the product of two signs:
\begin{itemize}
\item $(-1)^k$ where $k\in \{1,\ldots , 4\}$ is the label of the desingularized vertex;
\item $\lk(P,P^\prime)$ where $P$ and $P^\prime$ are the $V$ and the ordinary chord that result from the desingularization.
\end{itemize}
Two examples are presented in Figure~\ref{pic:expld}.
\end{notation}

\begin{figure}[!ht]
\centering
\labellist
\small\hair 2pt
\pinlabel $\text{Base diagram:}$ at 90 403
\pinlabel $=$ at 175 232
\pinlabel $-$ at 373.5 232
\pinlabel $-$ at 545.5 232
\pinlabel $+$ at 715.5 232

\pinlabel $-$ at 371.5 61
\pinlabel $+$ at 541.5 61
\pinlabel $-$ at 711.5 61
\pinlabel $+$ at 881.5 61
\pinlabel $-$ at 1052.5 61

\pinlabel $=$ at 175 61
\pinlabel $2$ at 223 338
\pinlabel $3$ at 347 338
\pinlabel $1$ at 223 468
\pinlabel $4$ at 347 468

\endlabellist
\hspace{5pt}
\includegraphics[scale=0.318
]{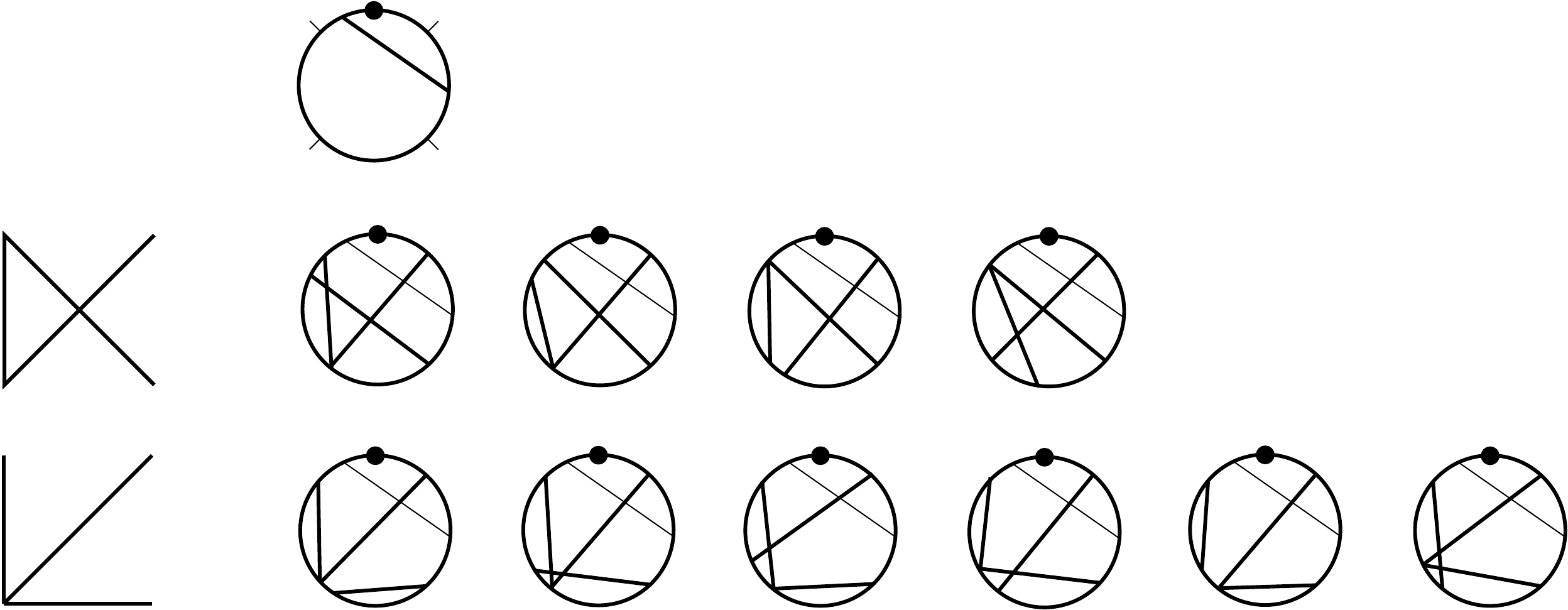}
\caption{Two examples of shortcuts and the linear combinations they stand for, given a base chord diagram with markings, shown at the top. The signs are explained in Notation~\ref{not:16trees}.}
\label{pic:expld}
\end{figure}
%

The $16$T and $28$T relations are as shown in Figures~\ref{pic:16T} and~\ref{pic:28T}. For comparison, the usual $4$T relations with our compact notation are shown in Figure~\ref{pic:4T}. \begin{figure}[!ht]
\labellist
\small\hair 2pt
\pinlabel $+$ at 94 32
\pinlabel $=0$ at 231 32
\pinlabel $+$ at 94 145
\pinlabel $=0$ at 231 145
\endlabellist
\centering 
\hspace{5pt}
\includegraphics[scale =0.44]{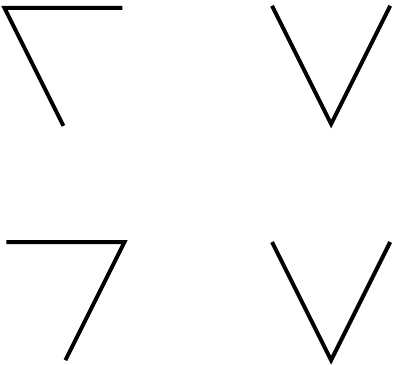}
\caption{The usual $4$T relations for comparison, written in a compact fashion inspired by the approach from Section~\ref{sec:4Tin}. The third one that one would write down is omitted as it is the difference of these two. The vertices are ordered as follows: \protect\cgraphbis}
\label{pic:4T}
\end{figure}
\begin{figure}[!ht]
\labellist
\small\hair 2pt
\pinlabel $+$ at 94 258
\pinlabel $+$ at 223 258
\pinlabel $+$ at 350 258
\pinlabel $=0$ at 487 258
\pinlabel $-$ at 94 145
\pinlabel $+$ at 223 145
\pinlabel $-$ at 350 145
\pinlabel $=0$ at 487 145
\pinlabel $+$ at 94 32
\pinlabel $+$ at 223 32
\pinlabel $-$ at 350 32
\pinlabel $=0$ at 487 32
\endlabellist\centering 
\hspace{5pt}
\includegraphics[scale =0.44]{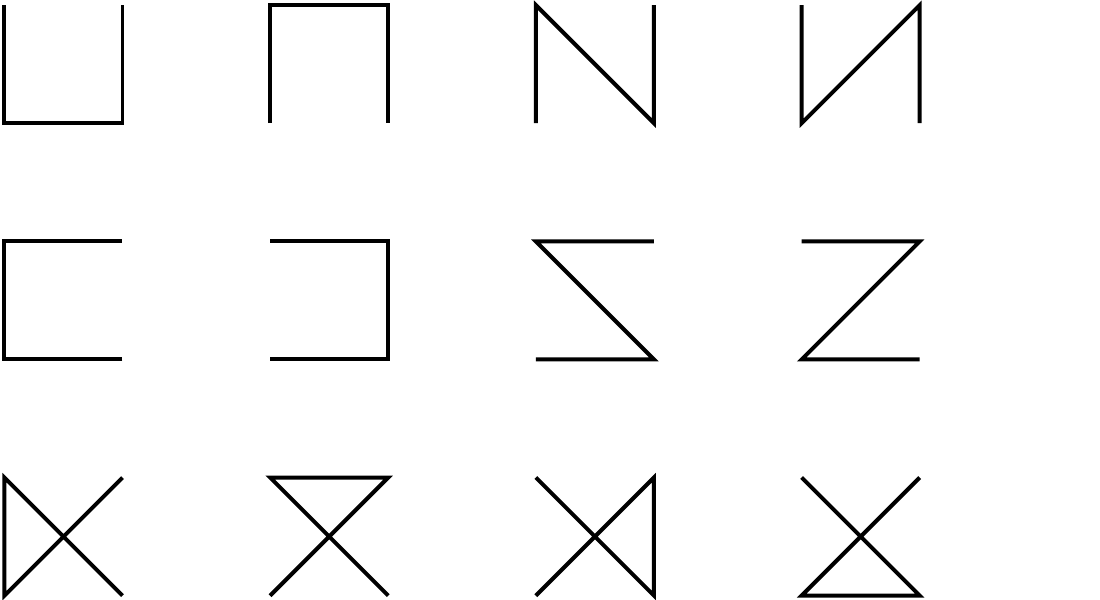}
\caption{The three $16$T relations. Each diagram brings four terms (see Notation~\ref{not:16trees}).}
\label{pic:16T}
\end{figure}

\begin{figure}[!ht]
\labellist
\small\hair 2pt
\pinlabel $+$ at 94 258
\pinlabel $+$ at 223 258
\pinlabel $+$ at 350 258
\pinlabel $+$ at 477 258
\pinlabel $+$ at 605 258
\pinlabel $=0$ at 732 258
\pinlabel $+$ at 94 145
\pinlabel $+$ at 223 145
\pinlabel $+$ at 350 145
\pinlabel $+$ at 477 145
\pinlabel $+$ at 605 145
\pinlabel $=0$ at 732 145
\pinlabel $+$ at 94 32
\pinlabel $+$ at 223 32
\pinlabel $+$ at 350 32
\pinlabel $+$ at 477 32
\pinlabel $+$ at 605 32
\pinlabel $=0$ at 732 32
\endlabellist\centering 
\hspace{5pt}
\includegraphics[scale =0.44]{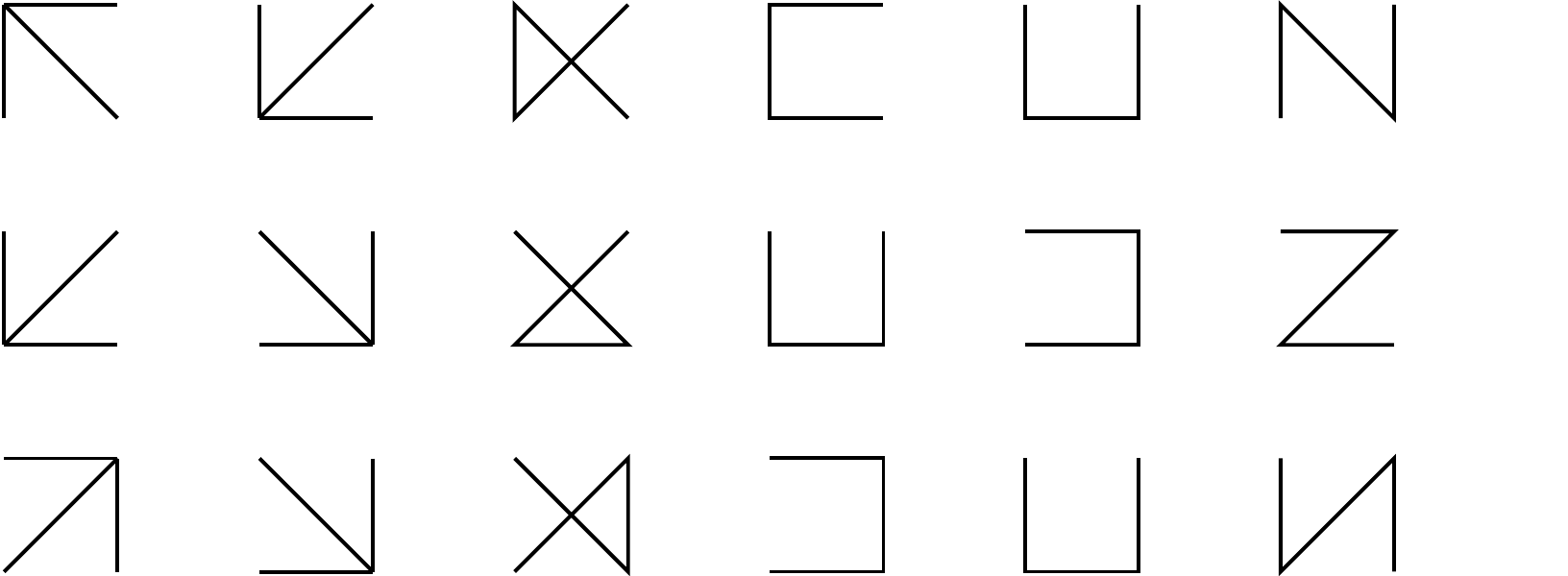}
\caption{The three $28$T relations. In each relation the first two diagrams bring six terms each and the other diagrams four terms each (see Notation~\ref{not:16trees}). Three other similar relations are yielded by these, involving the other three possible pairs of $3$-leaved trees.}
\label{pic:28T}
\end{figure}

\begin{remark}
The three $28$T relations as they are presented here in Figure~\ref{pic:28T} look like they can be obtained from each other by moving the point at infinity (and applying the second $16$T relation where necessary). However, this fails when the relations are presented in extended fashion, with each diagram expanded as in Figure~\ref{pic:expld} and $28$ $V$-diagrams in total, because the linking numbers involved actually depend on the point at infinity.
The fact that the usual $4$T relations can be written without referring to the point at infinity is morally related to the one-to-one correspondence between long knots and compact knots (embeddings of $\S^1$ into $\S^3$) at the level of isotopy classes. There is no such correspondence at the level of higher (co)cycles.
\end{remark}

\subsection{$\4$T relations}\label{sec:4x4}

Consider again a usual chord diagram and now pick two disjoint triples of points in $\R\setminus \bigcup \left\lbrace \text{chords}\right\rbrace$, labelling from $1$ to $3$ the vertices of the triple which owns the least of all six numbers, and from $4$ to $6$ the other three\footnote{This lexicographical order is to be compared with the one that defines the co-orientation of the spaces $\chi(\Gamma^d,J)$, and therefore the ingredient $\zeta$ of the incidence signs in the spectral sequence, in Vassiliev's \cite[Chapter V, Sections $3.3.4$ and $3.3.6$]{VassilievBook}.}, again according to the orientation of $\R$. Each triple can be completed into a $V$ in $3$ different ways, which makes nine different $V^2$-diagrams in total.

The $\4$T relations are then built as follows. Pick any pair of $4$T relators as presented in Figure~\ref{pic:4T}---there are four such pairs, for example \protect\fTone \, first and then \protect\fTtwo . Now expand the formal product of these relators, and set it to $0$: in the example, one obtains \protect\fTexample $=0$. Each of these two-component diagrams stands for the alternating sum of all four ways to desingularize the corresponding $V^2$-diagram into a $V$-diagram, where the coefficient of each summand is the product of\begin{itemize} \item $(-1)^k$, where $k\in \{1,\ldots , 6\}$ is the label of the desingularized vertex according to the scheme \protect\cgraphter  \vspace*{8pt}\item $\lk(P,P^\prime)$ where $P$ and $P^\prime$ are the resulting two ordinary chords.\end{itemize}

%

\section{The integral $Z^1$ for paths of Morse knots}\label{sec:leaf}

Similarly to the Kontsevich integral, our $1$-cocycle is more natural to define first in the space of Morse knots (defined in Section~\ref{sec:techKI}). 

\subsection{Main formula}

For $n\in \N$ with $n\geq 2$, we consider the unbounded closed simplex $$\Delta^n=\left\lbrace (t_1,\ldots,t_n)\in \R^n \,\left|\, t_1\leq\ldots\leq t_n \right\rbrace\right.$$
and its boundary $\partial\Delta^n=\bigcup_{i=1}^{n-1} \Delta^n_i$, where 
$$ \Delta^n_i=\left\lbrace (t_1,\ldots,t_n)\in \Delta^n \,\left|\, t_i= t_{i+1} \right\rbrace\right.$$
The usual orientation of $\R^n$ induces an orientation on $\Delta^n$, which in turn induces an orientation on $\partial\Delta^n$. Each face of $\Delta^n$ can be parametrized by $\Delta^{n-1}$ via duplication of the $i$-th coordinate. When doing so in order to integrate over $\partial\Delta^n$, a sign $(-1)^{i-1}$ appears to account for the orientation---assuming the convention \enquote{outer normal$\,\wedge\, \partial X \equiv X$ } for the orientation of a boundary. 

Let $\mu\colon [a,b]\times\R\rightarrow\R^3\simeq \C\times\R$ be a smooth path in the space of Morse knots. We define $Z^1(\mu)\in\A^1$ by the formula:

\[\sum_{n=1}^{\infty}\dfrac{1}{(2i\pi)^{n+1}}\underset{[a,b]\times \partial\Delta^{n+1}}{\int}\sum_{
\begin{array}{c}
\text{\footnotesize applicable pairings}\\
M=\left\lbrace
\left\lbrace z_j,z_j^\prime \right\rbrace \right\rbrace
\end{array}} (-1)^{\downarrow(M)}D_M\bigwedge_{j=1}^{n+1} \frac{dz_j-dz_j^\prime}{z_j-z_j^\prime}\]

where 
\begin{itemize}
\item for a given $\phi\in \left[a,b\right]$, and a given $t$ on the face $\dni$, an applicable pairing consists of a collection of pairs of complex numbers $z_j\neq z_j^\prime$,  such that \begin{itemize}
\item for every $j$, $(z_j, t_j)$ and $(z_j^{\prime}, t_j)$ lie on the knot $\mu_\phi$,
\item $z_i=z_{i+1}$,
\item $(z_i^\prime,t_i) < (z_{i+1}^\prime,t_{i+1})$ in the order induced by the knot's orientation.\footnote{The last two conditions mean that the two chords at the levels $i$ and $i+1$ should form a $V$, and the corresponding differential forms are ordered lexicographically.}
\end{itemize}

\item $D_M$ is the $V$-diagram naturally corresponding to $M$ and $\mu_\phi$.
\item \mbox{$\downarrow\!(M)$} stands for the number of points $(z_j^{(\prime)}, t_j)$ located on decreasing strands of the knot $\mu_\phi$. The point $(z_i,t_i)=(z_{i+1},t_{i+1})$ contributes only once.

\end{itemize}

\subsubsection*{Some remarks on the definition}
If one rewrites the differential forms in terms of $d\phi$ and the $dt_j$, and then expand the product, it appears that the factor providing the $d\phi$ part has to be either $j=i$ or $j=i+1$ (where $i$ is the index of the face $\dni$), because besides $d\phi$ the two corresponding $1$-forms depend on the same $dt_i$. In other words, only the $1$-forms $dz_j-dz_j^\prime$ corresponding to the $V$ in a $V$-diagram have the ability to measure the linking of two strands both in space and time. 
The integral will vanish on the parts of the integration domain where the $V$ is located on a steady portion of the knot. Ordinary chords will always contribute, the same way they do in the Kontsevich integral, measuring self-linking in space only, as soon as the $V$ is located on a part of the knot that moves with time---see Figure~\ref{pic:pathex}.

The $2$T relation, together with the Arnold identity \eqref{Arnold}, imply that a $V$-diagram with an isolated $V$ (that is, $\{\{a,b\}, \{b,c\}\}$ where $(\min(a,b,c),\max(a,b,c))$ meets no ordinary chords) will never contribute non-trivially to $Z^1$.

\begin{figure}[!ht]
\labellist
\small\hair 2pt
\pinlabel $a$ at 386 980
\pinlabel $b$ at 1073 983
\pinlabel $\R_\phi$ at 1264 983
\pinlabel $m(a)$ at -26 188
\pinlabel $t_1$ at 11 353
\pinlabel $t_2$ at 11 548
\pinlabel $M_1(a)$ at -36 650
\pinlabel $M_2(a)$ at -36 715
\pinlabel $\R_t$ at 6 857
\endlabellist
\hspace{19pt}
\includegraphics[width=12.9cm]{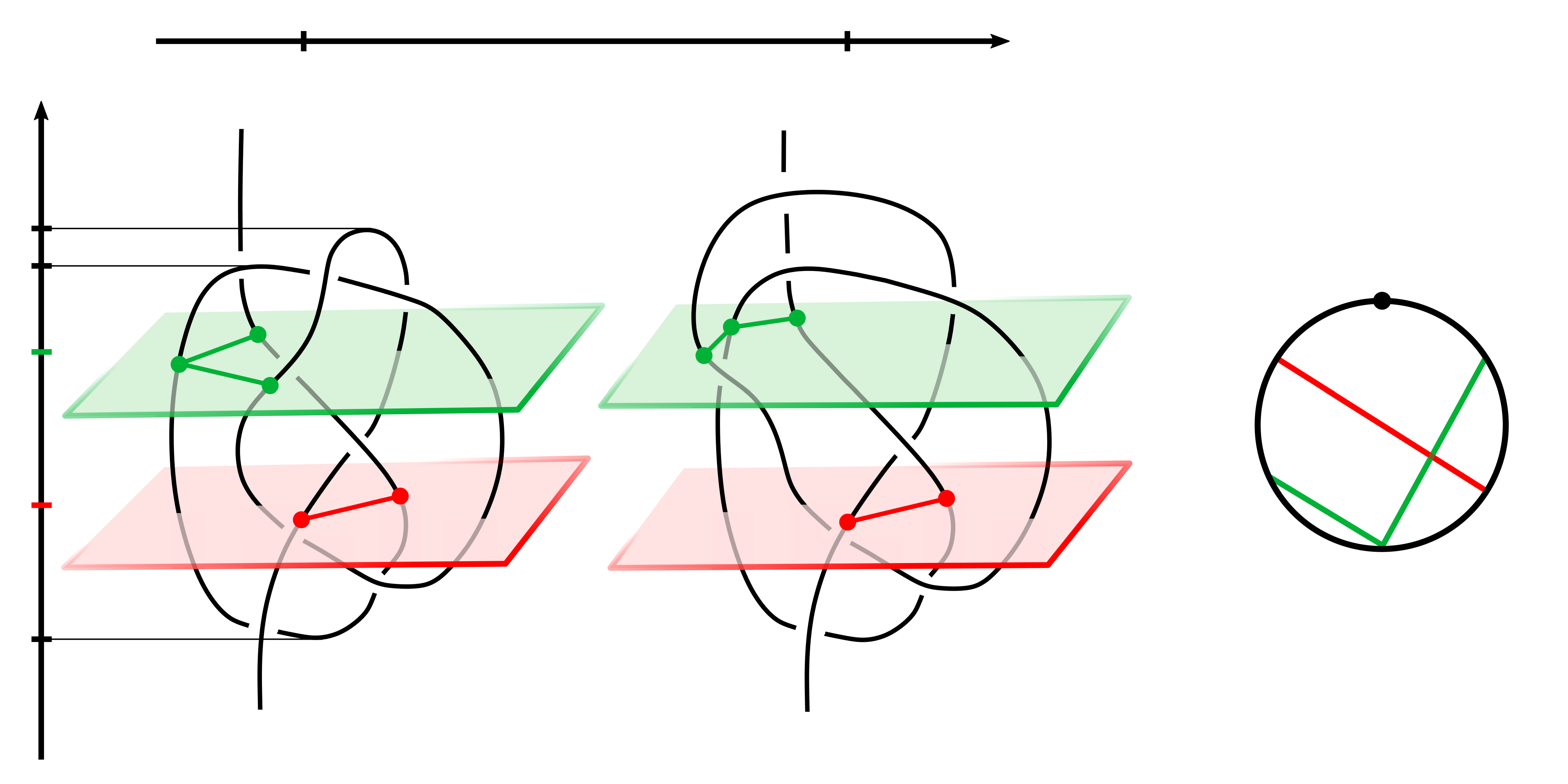}
\caption{On the left, a short path in the space of Morse knots (with only the initial and final knots displayed), and an applicable pairing with $n=2$. The variables are bound to the conditions $a\leq \phi\leq b$ and  $m(\phi)<t_1<t_2<\min(M_1(\phi), M_2(\phi))$. The top two chords continuously measure the linking between the three strands involved, over time ($\phi$) as well as space ($t_2$), while the ordinary chord below is only sensitive to space as in the Kontsevich integral. For lower values of $t_2$, all three strands stand still in time, and the integral formally evaluates to $0$ over this part of the integration domain. On the right, the $V$-diagram corresponding to this particular choice of strands.}
\label{pic:pathex}
\end{figure}

\subsection{A formula for braids}\label{subsec:braids}

If we isolate a chunk of the integration domain, of the form $$\Pi=\left\lbrace (\phi;t_1,\ldots,t_n)\in \left[a,b\right]\times\R^n\left|\begin{array}{c}
\phi_{\min}\leq\phi\leq\phi_{\max}\\t_{\min}\leq t_1<\ldots< t_n\leq t_{\max}

\end{array}\right\rbrace\right.$$ 

so that the path has no critical Morse points between these values, then this part can be seen as a moving braid with loose endpoints:
$$\beta\colon [\phi_{\min},\phi_{\max}]\times[t_{\min},t_{\max}]\rightarrow \C^p\setminus\Delta$$
where $p$ is the number of strands and $\Delta$ the fat diagonal of $\C^p$ (see Section~\ref{subsec:KIbraids}).

Thus we can afford a compact formula \eqref{eq:compact} like the one given by Lescop in her notes on the Kontsevich integral \cite[end of Section~1.4]{LescopK}. Recall the formal Knizhnik--Zamolodchikov connection (see for instance Kohno's book \cite{KohnoCFT} for the context in conformal field theory): $$\Omega_p=\frac{1}{2i\pi}\sum_{1\leq i < j\leq p}\Gamma_{ij}\,\omega_{ij}$$
where the $\Gamma_{ij}$ are the $1$-chord diagrams on $p$ vertical strands and $\omega_{ij}$ is the $1$-form $d\log(z_i-z_j)$ on $\C^p\setminus \Delta$. Similarly, we let $\Gamma_{ijk}$, where $i$, $j$ and $k$ are distinct, stand for the chord diagram on $p$ strands with a chord $\left\lbrace i,j\right\rbrace$ and a chord $\left\lbrace j,k\right\rbrace$ \textit{at the same altitude}. 

The set of pairs $\left\lbrace i,j\right\rbrace$ with $1\leq i\neq j\leq p$ is endowed with the lexicographical order. For $p\geq 3$ we define the $2$-form (see Figure~\ref{pic:Lamb3}) $$\Lambda_p=\frac{1}{(2i\pi)^2}\sum_{\left\lbrace i,j\right\rbrace<\left\lbrace j,k\right\rbrace}\Gamma_{ijk}\,\omega_{ij}\wedge\omega_{jk}$$ 

\begin{figure}[!ht]
\hspace*{-0.85cm}\labellist
\small\hair 2pt
\pinlabel $\omega_{12}\wedge\omega_{13}$ at 140 42
\pinlabel $+$ at 202 42
\pinlabel $\omega_{12}\wedge\omega_{23}$ at 373 42
\pinlabel $+$ at 435 42
\pinlabel $\omega_{13}\wedge\omega_{23}$ at 603 42
\pinlabel $1$ at 13 3
\pinlabel $2$ at 55 3
\pinlabel $3$ at 97 3
\pinlabel $1$ at 246 3
\pinlabel $2$ at 288 3
\pinlabel $3$ at 330 3
\pinlabel $1$ at 477 3
\pinlabel $2$ at 521 3
\pinlabel $3$ at 563 3

\pinlabel $\dfrac{1}{(2i\pi)^2}\left[\vphantom{\frac{\Bigg|}{\Bigg|}}\right.$ at -35 45
\pinlabel $\left.\vphantom{\frac{\Bigg|}{\Bigg|}}\right]$ at 650 45
\endlabellist
\centering 
\hspace{5pt}
\includegraphics[scale=0.53]{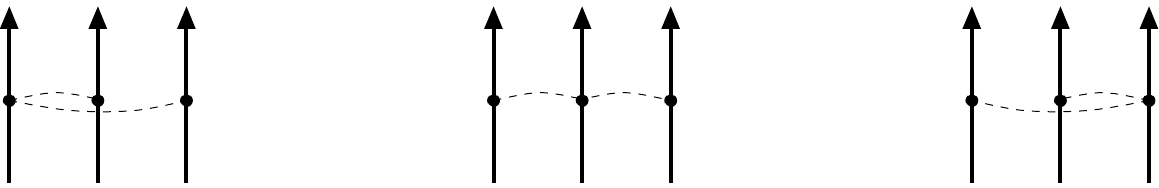}
\caption{The $2$-form $\Lambda_3$}
\label{pic:Lamb3}
\end{figure}

The integral is then defined over $\Pi$ by the formula

\begin{equation}
Z^1(\beta)=\sum_{n=1}^{\infty}\int_\Pi\sum_{i=1}^n (-1)^{i-1}\bigwedge_{j=1}^n\left[(\beta\circ (\Id\times\pr_j))^*\left(\begin{array}{cl}
\Lambda_p&\text{if $j=i$}\\
\Omega_p&\text{otherwise}
\end{array}\right)\right]
\label{eq:compact}
\end{equation}

A version of $16$T, $28$T and $\4$T relations can be naturally defined on $V$-diagrams based on $n$ strands like the $4$T relation from Figure~\ref{pic:4Tbraid}, consistently with the operation of connecting the strands from $1$ to $n$ into a single line.

\begin{lemma}
The integral $Z^1(\mu)$ is absolutely convergent.
\end{lemma}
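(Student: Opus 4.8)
The plan is to reduce the statement to the classical convergence argument for the Kontsevich integral seen as a $0$-cocycle, and then to control the two genuinely new features: the extra integration over the path parameter $\phi\in[a,b]$, and the collapsed level of the $V$ on each face $\Delta^{n+1}_i$. First I would fix $n$; since $\A^1$ is a graded completion and each value of $n$ contributes to a single graded piece, it suffices to prove that each summand is a finite, absolutely convergent integral. Fixing also a face $\Delta^{n+1}_i$, and using that the applicable pairings $M$ are finite in number while the $V$-diagrams $D_M$ range over a finite set, one may fix a norm on the relevant graded piece of $\A^1$ and reduce to the absolute integrability over $[a,b]\times\Delta^{n+1}_i$ of the scalar density obtained by expanding
$$\bigwedge_{j=1}^{n+1}\frac{dz_j-dz_j^\prime}{z_j-z_j^\prime}$$
in the coordinates $(\phi,t_1,\ldots,\widehat{t_{i+1}},\ldots,t_{n+1})$. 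By the remark following the definition, the $d\phi$ factor must be supplied by the $i$-th or $(i+1)$-st form, so this density is, up to sign, a product of $n$ \enquote{Kontsevich factors} $\dfrac{\partial_{t_j}(z_j-z_j^\prime)}{z_j-z_j^\prime}$ together with a single \enquote{new factor} $\dfrac{\partial_{\phi}(z_\sigma-z_\sigma^\prime)}{z_\sigma-z_\sigma^\prime}$ with $\sigma\in\{i,i+1\}$.

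Next I would localise the singularities. Outside a fixed compact range of altitudes only one point of each $\mu_\phi$ lies at any given altitude, so no chord is possible there; hence the effective domain in the $t$-variables is compact, uniformly in $\phi\in[a,b]$. Since each $\mu_\phi$ is an embedded Morse knot, the density is smooth and bounded wherever all $z_j\neq z_j^\prime$; and two distinct points at a common altitude can have coincident limit only when they merge at a critical point of the altitude function, so the only place where some $z_j-z_j^\prime$ vanishes is the limit in which the two endpoints of that chord approach such a critical point. It remains to estimate the density near the critical points, uniformly over the compact interval $[a,b]$.

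I would write $s=c(\phi)-t$ for the depth of a level below a moving critical value $c(\phi)$. Nondegeneracy gives $z_j-z_j^\prime\sim a(\phi)\sqrt{s}$ for a chord joining the two merging branches, with $a(\phi)$ and $c^\prime(\phi)$ uniformly bounded over $[a,b]$; consequently both a Kontsevich factor and the new factor of such a \enquote{short} chord reduce, up to bounded terms, to the single $1$-form $\tfrac12\,ds/s$, so the extra $\phi$-direction introduces no singularity worse than the classical $ds/s$. Moreover at the $V$ at most one of the two legs can be short at once, since three distinct points at one altitude cannot all lie on the two branches of a single critical point; thus the $V$ contributes at most one such factor, and there is no $s^{-2}$ behaviour. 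The analysis is then exactly of Kontsevich type: the only non-integrable contribution comes from the topmost short chord clustered over a critical point, which, when it is ordinary, is isolated and killed by the (local) $1$T relation, and, when it belongs to the $V$, is removed by the $2$T relation together with Arnold's lemma, as no diagram with an isolated $V$ contributes to $Z^1$. For the surviving diagrams each remaining short chord has its upper integration bound pinned by a higher level, so that an induction on the number of chords clustered at each critical point, integrating from the top downwards, turns every logarithmic contribution into a convergent integral; the compact integration in $\phi$ contributes only a bounded multiple.

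The main obstacle I anticipate is the bookkeeping at the $V$ on the collapsed face. One must verify carefully that the product of the new $\phi$-factor with the Kontsevich factors near a moving critical value is genuinely of order $s^{-1}$ and not $s^{-2}$, which rests on the observation that the two legs of the $V$ cannot shrink simultaneously; and one must match the isolated-$V$ configurations excluded by the $2$T relation and Arnold's lemma precisely with the configurations that would otherwise produce the non-integrable top-level term, this being the exact analogue, in the present boundary-of-simplex setting, of the role played by $1$T in the convergence of the ordinary Kontsevich integral.
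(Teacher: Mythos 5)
Your proposal is correct and follows essentially the same route as the paper's proof: uniform boundedness of the effective $t$-domain over the compact $\phi$-interval, the classical Kontsevich-type estimates near critical points (with the local $1$T relation killing isolated ordinary chords), and the single genuinely new singularity---an isolated chord serving as a leg of the $V$---cancelled by grouping the two $2$T-partner diagrams and applying Arnold's lemma to get $\omega_{ij}\wedge(\omega_{jk}-\omega_{ik})=\pm\,\omega_{ik}\wedge\omega_{jk}$, exactly as the paper does. The extra details you supply (the $\sqrt{s}$ expansion showing the $d\phi$-factor is no worse than $ds/s$, and the observation that at most one leg of the $V$ can be short near a nondegenerate critical point) simply make explicit what the paper compresses into \enquote{similar to the case of the Kontsevich integral}.
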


\begin{proof}
The $\pm\infty$ bounds of the integration domain are not a problem because the range of $t$-values that bring non-trivial contributions is bounded for each knot, and uniformly so since the range of $\phi$ is compact.

The rest of the proof is similar to the case of the Kontsevich integral (see e.g. \cite[Section~4.3.1]{BarNatanVKI}). The only new case consists of a singularity brought by an isolated chord that participates in a $V$. It is solved by the $2$T relation: indeed, when put together, the contributions of the two diagrams become, up to sign:
 $$\int\ldots\wedge \omega_{ij}\wedge (\omega_{jk}-\omega_{ik})\wedge\ldots$$
where $\left\lbrace i,j\right\rbrace$ is the isolated chord. By the Arnold identity \eqref{Arnold}, this amounts to $$\int\ldots\wedge \omega_{ik}\wedge \omega_{jk}\wedge\ldots$$
where both small denominators have disappeared.
\end{proof}

\subsection{Cocyclicity of $Z^1$}\label{subsec:cocy}

This subsection is devoted to the proof of the homotopy invariance of $Z^1$. In terms of connections, while the key point in the invariance of the Kontsevich integral was the flatness of the KZ connection ($d\Omega_p$ and $\Omega_p\wedge\Omega_p$ are both $0$), one of the major ingredients here is the  fact that $\Omega_p\wedge\Lambda_p-\Lambda_p\wedge\Omega_p=0$.

\begin{theorem}
$Z^1$ is a $1$-cocycle in the space of Morse knots.
\end{theorem}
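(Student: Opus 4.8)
The plan is to verify the two defining features of a $1$-cocycle separately: additivity under concatenation, and vanishing on boundaries of generic $2$-disks. The first is immediate, since $Z^1(\mu)$ is an integral over the path parameter $\phi\in[a,b]$, so splitting $[a,b]$ splits the integral and makes $Z^1$ a $1$-cochain on the space of Morse knots. It then suffices, by additivity, to show that $Z^1$ vanishes on the boundary loop of every generic $2$-disk $F\colon D^2\to\mathcal{K}$ whose boundary lies in the Morse stratum, where $\mathcal{K}$ is the space of knots. By general position, $F(D^2)$ meets the codimension-$1$ locus of non-Morse knots along arcs and the codimension-$2$ strata at isolated interior points; the strategy is to use Stokes' theorem to reduce $Z^1(F|_{\partial D^2})$ to a bulk term supported where the knot is genuinely Morse, plus a finite sum of local contributions at those isolated points, and to show that each piece vanishes.

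For the bulk term I would work chunk by chunk, using the braid reformulation: away from Morse critical points the integral is given by the compact formula built from $\Omega_p$ and $\Lambda_p$. Because the Arnold forms $\omega_{ij}$ are closed, applying Stokes inside a chunk collapses the interior variation to a purely algebraic identity relating $d\Omega_p$, $\Omega_p\wedge\Omega_p$, $d\Lambda_p$ and $\Omega_p\wedge\Lambda_p\pm\Lambda_p\wedge\Omega_p$. The chord diagrams $\Gamma_{ij}$ satisfy the infinitesimal braid ($4$T) relations that make $\Omega_p$ flat, exactly as in the $0$-cocycle case; what must be added is the compatible higher identity governing the $\Gamma_{ijk}$. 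I expect this identity to hold in $\mathcal{A}^1$ precisely by virtue of the $16$T, $28$T and $\4$T relations---this is the sense in which $\Lambda_p$ generalises the Knizhnik--Zamolodchikov connection, in line with \cite{CirioFaria}---so that the bulk contribution is zero.

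The heart of the proof is the analysis of the isolated codimension-$2$ points. Following Vassiliev's stratification, a generic $2$-disk meets only finitely many types of doubly-degenerate knot: those producing a spanning tree on four points (the first family of $V^2$-diagrams), those producing two $V$'s on two disjoint triples (the nine $V^2$-diagrams of Section~\ref{sec:4x4}), and the lower strata coming from isolated chords or from endpoints colliding inside a $V$. For each, the plan is to compute the value of $Z^1$ on a small loop encircling the point and to match it, after the Birman--Lin change of variable to linking numbers \cite{BirmanLin} and with the $(-1)^{\text{label}}$ and $\lk(P,P^\prime)$ weights of Notation~\ref{not:16trees}, against a single relator. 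The claim to establish is that these signed sums reproduce \emph{exactly} the $16$T relations (Figure~\ref{pic:16T}), the $28$T relations (Figure~\ref{pic:28T}) and the $\4$T relations, while the degenerate strata reproduce the $1$T and $2$T relations already used for convergence. Since all of these vanish in $\mathcal{A}^1$ by construction, so does every local contribution, and hence $Z^1(F|_{\partial D^2})=0$.

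The main obstacle is the sign and stratum bookkeeping in this last step: one must track the face signs $(-1)^{i-1}$ coming from the orientation of $\partial\Delta^{n+1}$, the signs $(-1)^{\downarrow(M)}$ from points on decreasing branches, and the linking-number weights, and show that at each codimension-$2$ stratum they assemble into precisely the signed combination defining a relator of $\mathcal{A}^1$---together with the correct matching of $V$-diagrams across Morse critical points when the braid chunks are glued (caps, cups, and births and deaths of critical pairs), where the $1$T and $2$T relations must absorb the isolated-$V$ terms via Arnold's lemma. Because the relations of $\mathcal{A}^1$ were reverse-engineered to make exactly these contributions cancel, the real content of the theorem is to show that the geometry produces those signed sums and nothing else.
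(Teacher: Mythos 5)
Your overall Stokes strategy is the right one, and your ``bulk'' observation---that the invariance collapses to a flatness-type identity for the pair $(\Omega_p,\Lambda_p)$---is genuinely the mechanism the paper uses. But the geometric setup is wrong for this theorem, and it misplaces where the relations of $\A^1$ enter. The statement concerns cocyclicity \emph{in the space of Morse knots}: the paper takes a smooth homotopy $H\colon[0,1]\times[a,b]\times\R\to\C\times\R$ in which \emph{every} knot $H(\psi,\phi,\cdot)$ is Morse, with fixed ends, and applies Stokes' theorem over $[0,1]\times[a,b]\times\partial\Delta^{n+1}$. Your reduction to generic $2$-disks in $\mathcal{K}$ crossing the non-Morse discriminant both overshoots and breaks down: $Z^1$ is only defined (and convergent) on paths of Morse knots, so the exact form you want to integrate over the disk degenerates along your non-Morse arcs; and invariance across the non-Morse wall is exactly what the uncorrected $Z^1$ \emph{lacks}---a sliding hump has nontrivial $Z^1$ cost, which is why the corrected $\hat{Z}^1$ and the hump-sliding argument of Mostovoy are introduced later, for the separate theorem about the space of all long knots. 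Within a Morse homotopy there are likewise no births or deaths of critical pairs, which you invoke when gluing braid chunks.

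Consequently what you call the heart of the proof---isolated codimension-$2$ points of a Vassiliev-type stratification where ``doubly-degenerate'' knots realise $V^2$-diagrams---is vacuous here: no singular, nor even non-Morse, knot ever occurs in the relevant homotopy, so there are no such points for your local loops to encircle. The $16$T, $28$T and $\4$T relations enter instead at boundary faces of the \emph{integration domain} while the knot stays perfectly generic: the triple collision $t_j=t_{j+1}=t_{j+2}$ contributes $\Omega_n\wedge\Lambda_n-\Lambda_n\wedge\Omega_n$, whose vanishing is shown in the appendix to be equivalent to the $16$T and $28$T relations alone (so $\4$T does not participate in your algebraic identity, contrary to what you assert), while the disjoint double collision $t_j=t_{j+1}$, $t_k=t_{k+1}$ with $j+1<k$ is a separate face cancelled precisely by the $\4$T relations. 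The part you dismiss as sign bookkeeping is, conversely, where the real case analysis lives: the faces $t_j=T(\psi,\phi)$, where a chord level passes a Morse critical value, split into four subcases (cancellation between the two branches near the critical point; the $1$T relation for an ordinary chord joining the branches; a vanishing wedge when a $V$ has one tip on each branch, since both chords then carry the same form; and the $2$T relation combined with Arnold's lemma for the remaining $V$ configurations), together with the faces $\phi=a$ and $\phi=b$, which die because along them the two $1$-forms of any $V$ are multiples of the same $dt_i$. Reorganising your attempt around these faces, and deleting the discriminant analysis, would turn it into the paper's proof.
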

\begin{proof}
Let $H$ be a smooth map $[0,1]\times[a,b]\times\R\rightarrow \C\times\R$ such that for every $\psi\in [0,1]$ and every $\phi\in [a,b]$ the map $H(\psi, \phi, \cdot)$ is a Morse knot, and the knots $H(\psi, a, \cdot)$ and $H(\psi, b, \cdot)$ do not depend on $\psi$. The assertion is that $$Z^1(H(1,\cdot,\cdot))=Z^1(H(0,\cdot,\cdot)).$$

The conditions above imply that all knots $H(\psi, \phi, \cdot)$ have the same number $c$ of Morse critical points and allow us to define $c$ smooth maps of the form $$T\colon [0,1]\times[a,b]\rightarrow \R$$ such that for every $(\psi, \phi)\in [0,1]\times[a,b]$, $T(\psi, \phi)$ is a critical point of $H(\psi, \phi, \cdot)$.

Similarly to the proof of the invariance of the Kontsevich integral, we use Stokes' theorem. On the left-hand side, we have $0$ as we integrate exact forms. On the right-hand side, the boundary of the integration domain is made of the following parts:

\begin{itemize}[leftmargin=*]
\item $\psi=0$ or $\psi=1$: contributes $Z^1(H(1,\cdot,\cdot))-Z^1(H(0,\cdot,\cdot)).$
\item $\phi=a$ or $\phi=b$: no contribution because the two differential forms coming from a $V$ are collinear (both multiples of the same $dt_i$) along these faces since there is no dependence on $\psi$ or $\phi$, so their exterior product vanishes.
\item $t_j=t_{j+1}=t_{j+2}$ for some $j$: no contribution because $\Omega_p\wedge\Lambda_p-\Lambda_p\wedge\Omega_p=0$ (this equality is equivalent to the set of $16$T and $28$T relations, see Appendix~\ref{Appen:1}).
\item $t_j=t_{j+1}$ and $t_k=t_{k+1}$ for some $j$ and some $k$ with $j+1<k$. All the contributions from this stratum cancel out by the $\4$T relations.
\item $t_j=T(\psi, \phi)$, meaning the $j$-th level reaches a critical point:\begin{itemize}

\item if only one branch of the knot near the critical point is involved in the $j$-th graph component, then this piece of boundary cancels off with the similar part of the integral that involves the other branch.
\item if the two branches are involved and the $j$-th component is an ordinary chord, then the contribution vanishes because of the $1$T relation.

\item in case of a $V$ whose tips occupy each one of the branches, there is no contribution since the two chords of the $V$ both bring the same differential form on this piece of boundary.

\item the two remaining cases---of a $V$ one of whose chords links the two branches---can be grouped together thanks to the $2$T relation, and cancel out because of the Arnold identity \eqref{Arnold}.\qedhere
\end{itemize}
\end{itemize}
\end{proof}
Note that all of the defining relations of $\A^1$ are used in this proof.

\subsection{Elementary functoriality}\label{sec:elemf}

The space of knots acts on paths via left and right connected sum: 
if $K_1$ and $K_2$ are (Morse) knots and $\mu$ is a path of (Morse) knots,  then $K_1\hash\mu\hash K_2$ is the path $\mu$ performed with two steady factors $K_1$ and $K_2$ on the left and on the right respectively. Similarly the space $\D^1$ of $V$-diagrams is endowed with the structure of a $\D^0$-bimodule, with operations defined by left and right concatenation. This fails to descend into an $\A^0$-bimodule structure on $\A^1$, because the classical $4$T relations that define $\A^0$ do not hold \textit{a priori} in $\A^1$. However, a weaker version of these relations holds in $\A^1$ as shown by Theorem~\ref{thm:bimod}.

Let $K$ and $\tilde{K}$ be two Morse knots that are isotopic within the space of Morse knots. Recall from Section~\ref{sec:techKI} that $\uZ(K)$ denotes the pre-Kontsevich integral of $K$ in $\D^0/\left\lbrace 1\text{T relations}\right\rbrace$, so that $\uZ(K)-\uZ(\tilde{K})$ is zero modulo $4$T, and that $Z(K)=Z(\tilde{K})\in \A^0$ denotes the class of $\uZ(K)$ modulo $4$T. In other words, $Z$ is the Kontsevich integral for Morse knots, \textit{without the corrective term $Z(\infty)^{-c/2}$}.

\begin{lemma}\label{Lem:prefoncto}{\color{white}.}
\begin{itemize}
\item[(a)] If $\mu_1$ and $\mu_2$ are paths in the space of Morse knots such that the composition $\mu_1\cdot \mu_2$ makes sense, then $$Z^1(\mu_1\cdot \mu_2)= Z^1(\mu_1)+Z^1(\mu_2)$$

In particular, for any path $\mu$, $$Z^1(\mu^{-1})= -Z^1(\mu)$$
\item[(b)] If $K$ and $\tilde{K}$ are isotopic Morse knots as above, and if $\gamma$ is a loop in the space of Morse knots, then
$$(\uZ(K)-\uZ(\tilde{K})) Z^1(\gamma)=0\in\A^1$$
so that the product $Z(K)Z^1(\gamma)$ is well-defined in $\A^1$. Similarly  the product $Z^1(\gamma)Z(K)$ is well-defined in $\A^1$.

\item[(c)] Given $K_1$, $K_2$ and $\gamma$ as above, one has $$Z^1(K_1\hash\gamma\hash K_2)= Z(K_1)Z^1(\gamma)Z(K_2)$$
\end{itemize}
\end{lemma}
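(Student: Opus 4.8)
The plan is to establish the three parts of Lemma~\ref{Lem:prefoncto} in order, since part (a) is essentially a formal property of the integral, part (b) is a structural statement about $\A^1$, and part (c) combines the two with the multiplicativity of the Kontsevich integral under connected sum.

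\textbf{Part (a).} This is the additivity of $Z^1$ under concatenation of paths, and I would prove it directly from the defining integral formula. Writing $\mu_1\cdot\mu_2$ as the reparametrisation of $\mu_1$ on $[a,c]$ followed by $\mu_2$ on $[c,b]$, the integration domain $[a,b]\times\partial\Delta^{n+1}$ splits as the union of $[a,c]\times\partial\Delta^{n+1}$ and $[c,b]\times\partial\Delta^{n+1}$ along the hyperplane $\phi=c$. Since the integrand is a top form on this domain and the integral is absolutely convergent (by the preceding Lemma), the integral over the union is the sum of the integrals over the two pieces, which are exactly $Z^1(\mu_1)$ and $Z^1(\mu_2)$. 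The statement $Z^1(\mu^{-1})=-Z^1(\mu)$ then follows because running the path backwards reverses the orientation of the $\phi$-interval, which flips the sign of the $d\phi$-factor that every non-trivial summand necessarily carries (as observed in the remarks following the definition). Concatenation with the constant path contributes $0$ for the same reason the $\phi=a,b$ faces do in the cocyclicity proof, so this is the easy part.

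\textbf{Part (b).} Here the content is that $(\uZ(K)-\uZ(\tilde K))$, which is a series of $4$T-relators in $\D^0$, annihilates $Z^1(\gamma)$ in $\A^1$ under the $\D^0$-bimodule action. Since $K$ and $\tilde K$ are isotopic through Morse knots, $\uZ(K)-\uZ(\tilde K)$ lies in the subspace of $\D^0$ spanned by $4$T relators (this is the standard fact recalled in the text: it is zero modulo $4$T). The task therefore reduces to showing that a single $4$T relator, concatenated on the left (or right) with any $V$-diagram appearing in $Z^1(\gamma)$, vanishes in $\A^1$. This is precisely where the new relations of Section~\ref{sec:1628} enter: left-concatenating a $4$T relator with a $V$ produces exactly a linear combination of the desingularised trees appearing in the $16$T and $28$T relations, and left-concatenating with an ordinary chord that is itself part of a larger configuration likewise lands in the span of these relators. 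I would make this correspondence explicit by tracking how the product of a $4$T relator and a $V$-diagram matches Notation~\ref{not:16trees}, using the linking-number signs, so that the product is identically $0$ in $\A^1$. The symmetric argument with right-concatenation gives well-definedness of $Z^1(\gamma)Z(K)$.

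\textbf{Part (c).} With (b) in hand, $Z(K_1)Z^1(\gamma)Z(K_2)$ is a well-defined element of $\A^1$, and it remains to identify it with $Z^1(K_1\hash\gamma\hash K_2)$. The path $K_1\hash\gamma\hash K_2$ keeps $K_1$ and $K_2$ steady while $\gamma$ moves, so by the remarks after the main definition only the moving middle factor can carry the $d\phi$ appearing in a non-trivial $V$, while the steady factors $K_1$ and $K_2$ contribute only ordinary chords. This factorises each applicable pairing into a triple (chords entirely in $K_1$) $\times$ (the $V$-diagram of $\gamma$) $\times$ (chords entirely in $K_2$), together with mixed chords linking the factors; the mixed-chord contributions are organised exactly so as to reproduce the multiplicative structure of the Kontsevich integral. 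Invoking the multiplicativity $\uZ(K_1\hash\gamma\hash K_2)$-style factorisation for the $0$-cocycle part and the bimodule structure on $\D^1$, the integral over the product domain separates into the concatenation product $Z(K_1)\cdot Z^1(\gamma)\cdot Z(K_2)$, which is well-defined in $\A^1$ by part (b).

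I expect the main obstacle to be Part (b), specifically the bookkeeping that identifies the concatenation of a $4$T relator with a $V$-diagram as an element of the span of the $16$T and $28$T relators. The signs are delicate: one must reconcile the $(-1)^{i-1}$ and $(-1)^k$ vertex signs, the $\lk(P,P')$ linking-number signs from Notation~\ref{not:16trees}, and the orientation conventions on $\partial\Delta^{n+1}$, all simultaneously. Getting this matching exactly right---so that the combinatorial relators of Section~\ref{sec:1628} are genuinely the image of the classical $4$T relators under concatenation, rather than merely resembling them---is the crux, and it is precisely this that the abstract of the paper describes as a \enquote{change of variable similar to the one that led Birman--Lin to discover the $4$T relations.}
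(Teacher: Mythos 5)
Your parts (a) and (c) match the paper's proof in substance: (a) is exactly the additivity of the integral under splitting of the $\phi$-domain, and (c) is the Fubini factorisation $Z^1(K_1\hash\mu\hash K_2)=\uZ(K_1)Z^1(\mu)\uZ(K_2)$ (steady factors contribute no $V$'s) combined with (b). The genuine gap is in your part (b). You reduce it to the claim that a single $4$T relator, concatenated with a $V$-diagram, lies in the span of the $16$T and $28$T relators, hence vanishes in $\A^1$. This is structurally wrong: in every summand of a $16$T or $28$T relator the $V$ is itself one of the desingularisations of a spanning tree on the four distinguished points---the $V$ varies from summand to summand and sits on those points---whereas in the product of a $4$T relator with a diagram from $Z^1(\gamma)$ the $V$ is held fixed, disjoint from the four points, which carry only ordinary chords. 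None of the defining relations of $\A^1$ ($1$T, $2$T, $16$T, $28$T, $\4$T) equates $V$-diagrams differing only in their ordinary-chord part with the $V$ frozen; even the $\4$T relations mix desingularisations of both triples. The paper says this explicitly just before the Lemma: the $\D^0$-bimodule structure \emph{fails} to descend to an $\A^0$-bimodule structure on $\A^1$ because the classical $4$T relations do not hold a priori there. If your reduction worked, all of $\A^1$ would be an $\A^0$-bimodule---precisely what the paper does not claim (Theorem~\ref{thm:bimod} establishes it only for the image of $\hat{Z}^1$ on loops, and its proof \emph{uses} part (b) rather than the other way around).

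The paper's actual proof of (b) is topological and needs no sign bookkeeping at all: apply $Z^1$ to the contractible loop of Figure~\ref{pic:Lem:prefonct}, composed of $K\hash\gamma$, a path carrying $K\hash L$ to $\tilde{K}\hash L$, then $\tilde{K}\hash\gamma^{-1}$, then the reverse connecting path. Cocyclicity of $Z^1$ (already proved) gives $0$ on this null-homotopic loop; part (a) cancels the two connecting paths against each other and turns $\gamma^{-1}$ into $-\gamma$; the Fubini pre-version of (c) then converts what remains into $(\uZ(K)-\uZ(\tilde{K}))Z^1(\gamma)=0$ in $\A^1$. So the well-definedness is extracted from the invariance of $Z^1$, not from identifying $4$T relators inside the relations of Section~\ref{sec:1628}---the combinatorial matching you identify as the crux is exactly the step that cannot be carried out, and the trivial-loop argument is the missing idea that circumvents it.
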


Just as the multiplicativity property of the Kontsevich integral allows one to define the corrected integral $\hat{Z}$ (see Section~\ref{sec:techKI}), Point \textsl{(c)} will allow us to correct $Z^1$ into a $1$-cocycle outside the class of Morse knots in the next section. Point \textsl{(b)} is useful not only for \textsl{(c)} to make sense, but also in the proof that some $4$T relations hold in $\A^1$ (Theorem~\ref{thm:bimod}). 
\begin{proof}
Point \textsl{(a)} follows from the additivity property of integrals.
The Fubini theorem gives a pre-version of \textsl{(c)},
$$Z^1(K_1\hash\mu\hash K_2)= \uZ(K_1)Z^1(\mu)\uZ(K_2)$$ after one notices that because $K_1$ and $K_2$ are steady, no $V$ will contribute non-trivially at their levels. Point \textsl{(b)} and subsequently \textsl{(c)} now follow from the application of $Z^1$ to the trivial loop depicted in Figure~\ref{pic:Lem:prefonct}.\qedhere
\begin{figure}[!ht]
\hspace*{-1cm}\labellist
\small\hair 2pt
\pinlabel $L$ at 30 442
\pinlabel $K$ at 30 353
\pinlabel $L$ at 314 442
\pinlabel $K$ at 314 353
\pinlabel $L$ at 30 144
\pinlabel $\tilde{K}$ at 30 58
\pinlabel $L$ at 314 144
\pinlabel $\tilde{K}$ at 314 58
\pinlabel $\stackrel{\text{\normalsize$K\hash\gamma$}}{\xrightarrow{\hphantom{\text{\normalsize$K\hash\gamma^{-1}$}}}}$ at 172 398
\pinlabel $\stackrel{\text{\normalsize$\tilde{K}\hash\gamma^{-1}$}}{\xleftarrow{\hphantom{\text{\normalsize$K\hash\gamma^{-1}$}}}}$ at 172 103
\pinlabel $\uparrow$ at 30 250
\pinlabel $\downarrow$ at 314 250

\endlabellist
\centering 
\hspace{5pt}
\includegraphics[scale=0.29]{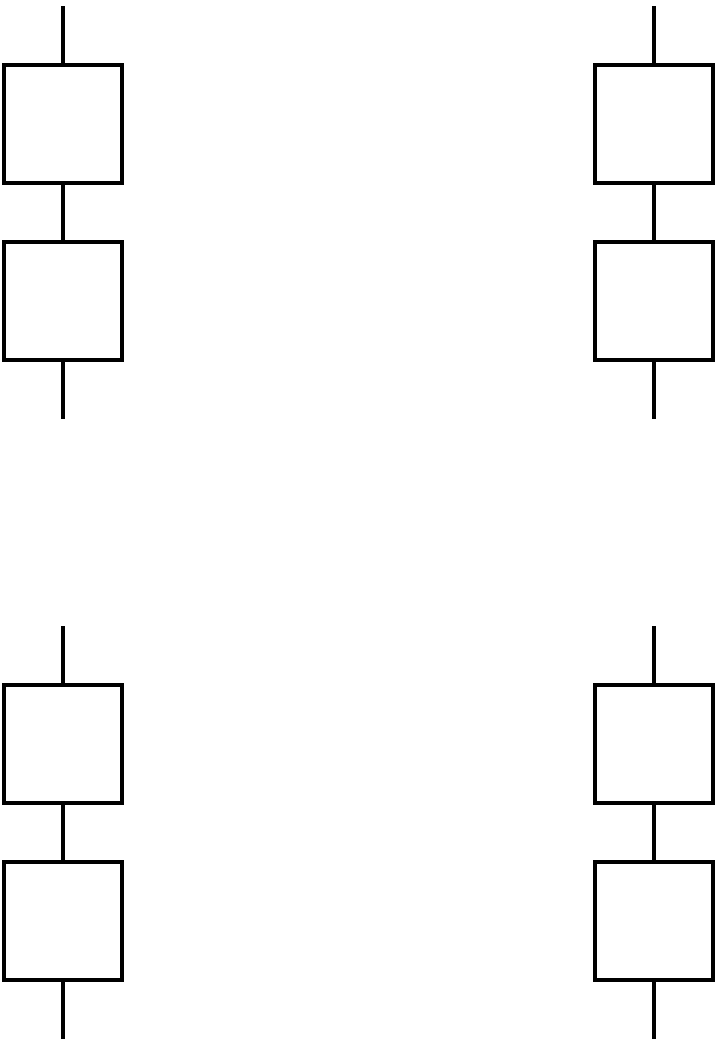}
\caption{$L$ denotes the knot $\gamma(0)$. This loop is trivial within the space of Morse knots, hence $Z^1$ vanishes. The vertical arrows are inverse of each other (pick any path from $K$ to $\tilde{K}$ and connect it with $L$), so their contributions to $Z^1$ cancel out.\qedhere}\label{pic:Lem:prefonct}
\end{figure}

\end{proof}

\section{The corrected integral $\hat{Z}^1$}\label{sec:correctZ1}

We are now ready to define the correction that will get rid of the framing-dependence and make $Z^1$ into a $1$-cocycle in the space of all knots. The correction is essentially the same for our invariant and the Kontsevich integral, which allows us to exhibit a functorial relationship between the two of them in Section~\ref{sec:morefunct}.

\subsection{$\hat{Z}^1$ on paths of Morse knots}
Recall from Section~\ref{sec:techKI} the correction that allows the Kontsevich integral to be a knot invariant outside the class of Morse knots:
$$\hat{Z}(K)=\frac{Z(K)}{Z(\infty)^{c(K)/2}}$$
where $c(K)$ is the number of Morse critical points of $K$ and the symbol $\infty$ stands for a hump (Figure~\ref{pic:hump}). 

Fix a parametrization of the hump, hereafter denoted by $\infty_0$. Its pre-Kontsevich integral $\uZ(\infty_0)$ has an inverse in $\D^0/\left\lbrace 1\text{T relations}\right\rbrace$ as usual via power series. Let $\mu\colon [a,b]\times\R\rightarrow \C\times\R$ be a path in the space of Morse knots. The number of critical points of the knot $\mu_\phi$ is then the same for every value of $\phi$. Call this number $c(\mu)$ and set$$\hat{Z}^1(\mu)= {\uZ(\infty_0)^{-c(\mu)/2}}{Z^1(\mu)}\in \A^1$$

We will see later on (Theorem~\ref{thm:bimod}) that in the case of loops much less care is required, and one can write 
$$\hat{Z}^1(\gamma)= \frac{Z^1(\gamma)}{Z(\infty)^{c(\gamma)/2}}$$

\subsection{$\hat{Z}^1$ on arbitrary paths}

Let $\mu\colon [a,b]\times\R\rightarrow \R^3$ be a path in the space of knots, generic with respect to the Morse function. By this we mean that the knot $\mu(\phi,\cdot)$ is Morse except at finitely many values of $\phi$ in $(a,b)$ where its number of critical points jumps up or down by $2$. We are going to associate to $\mu$ a path $\tilde{\mu}$ in the space of Morse knots. 

Let $K=\mu(a,\cdot)$, and $k$ an integer at least equal to the number of positive perestroikas (birth of a hump) in $\mu$. We set $$\tilde{K}=\infty_0^{\hash k}\hash K,$$
meaning that $K$ receives $k$ factors on the left, all of them isometric to $\infty_0$. The path $\tilde{\mu}$ starts at $\tilde{K}$.
Then, let the path $\mu$ unfold on the right factor, and whenever a perestroika occurs in $\mu$, replace it in $\tilde{\mu}$ by the sliding of a hump either to or from the stock of humps on the left---which can be done while staying within the class of Morse knots (see \cite[Lemma~$3.1$]{MostovoyMorse}).

For a path $\mu$ as above, we set $$\hat{Z}^1(\mu)=\hat{Z}^1(\tilde{\mu}).$$

It does not depend on the choice of $k$ thanks to the property $$Z^1(\infty\hash \mu)=\uZ(\infty)Z^1(\mu)$$ mentioned in the proof of Lemma~\ref{Lem:prefoncto},
and it does not depend on the exact paths followed by the sliding humps by the invariance property of $Z^1$.

By taking limits, one sees that an alternative definition is to set $\hat{Z}^1(\mu)$ to be the sum of $\hat{Z}^1$ on all regular parts of $\mu$ and then add/subtract the cost of moving an infinitesimal hump from $-\infty$ to the place where each perestroika occurs.

\begin{theorem}
$\hat{Z}^1$ is a $1$-cocycle in the space of long knots.
\end{theorem}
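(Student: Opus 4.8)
The plan is to derive the statement from the Morse case already proved, by tracking the behaviour of $\hat{Z}^1$ along a generic homotopy as it crosses the non-Morse strata. Because $\hat{Z}^1$ is additive under concatenation and sign-reversing under path reversal (Lemma~\ref{Lem:prefoncto}(a)), being a $1$-cocycle is equivalent to invariance under homotopies rel endpoints: given two paths $\mu,\mu'$ of long knots with common endpoints that are homotopic in the full space, I must show $\hat{Z}^1(\mu)=\hat{Z}^1(\mu')$. First I would observe that $\hat{Z}^1$ is already a cocycle on the Morse stratum, where the number of critical points $c$ is locally constant: along any homotopy that stays Morse one has $\hat{Z}^1=\uZ(\infty_0)^{-c/2}Z^1$ with constant corrective factor, so invariance follows from the cocyclicity of $Z^1$ established above, the fixed hump factors being commuted past the variation by the functoriality of Lemma~\ref{Lem:prefoncto} and the weak $4$T relations of Theorem~\ref{thm:bimod} (these are needed precisely because the genuine $4$T relations fail in $\A^1$).

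Next I would put the homotopy $H\colon[0,1]^2\to(\text{long knots})$ in general position with respect to the Morse function, so that the knots it meets are Morse outside a graphic $\Sigma\subset[0,1]^2$: a $1$-complex whose arcs are the perestroikas, across which $c$ jumps by $2$, together with finitely many vertices at the codimension-$2$ degeneracies of a generic $2$-parameter family, namely transverse double points of $\Sigma$, swallowtail (cusp) points, and the places where several of these coincide (see \cite{HatcherTopologicalmoduli} and \cite{MostovoyMorse}). Using the alternative description of $\hat{Z}^1$ as the sum of $\hat{Z}^1$ over the Morse sub-paths plus one infinitesimal-hump-slide correction per perestroika crossing, the difference $\hat{Z}^1(\mu')-\hat{Z}^1(\mu)$ decomposes into local contributions indexed by the cells of $\Sigma$; it therefore suffices to prove that a small loop encircling each vertex of $\Sigma$ contributes $0$.

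Then I would dispose of the vertices type by type, using the bimodule formalism to move the hump factors around. A small loop disjoint from the perestroika locus stays in the Morse stratum and contributes nothing by the first step. Along a perestroika arc the correction is the cost of sliding a hump in from $-\infty$, and applying the $Z^1$-invariance to the family consisting of the knot together with its sliding hump shows this cost is well defined independently of the crossing point --- the codimension-$1$ consistency already underlying the well-definedness of $\hat{Z}^1$. At a transverse double point the two perestroikas occur in disjoint altitude windows, so the Fubini computation behind Lemma~\ref{Lem:prefoncto} makes their corrections add independently and the loop cancels; and any configuration in which a birth is immediately reversed by a death contributes a pair of inverse hump slides, which cancel through $Z^1(\sigma^{-1})=-Z^1(\sigma)$.

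The genuinely delicate vertex --- and the step I expect to be the main obstacle --- is the swallowtail, where three critical points interact and three perestroika arcs reorganise. Here I would write down the local knot family explicitly, express the competing hump-slide corrections by means of the multiplicativity $Z^1(\infty\hash\mu)=\uZ(\infty)Z^1(\mu)$ and the functoriality of Lemma~\ref{Lem:prefoncto}, and check that they sum to $0$ in $\A^1$. Since the cancellation used in the classical invariance proof of the Kontsevich integral relies on the full $4$T relations, which are unavailable in $\A^1$, this is exactly the point where the relations on $\A^1$ designed in Section~\ref{sec:A} (and the weak $4$T relations of Theorem~\ref{thm:bimod}) must be invoked to force the three corrections to balance. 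Once every vertex of $\Sigma$ is shown to contribute trivially, the local contributions telescope to give $\hat{Z}^1(\mu)=\hat{Z}^1(\mu')$, proving that $\hat{Z}^1$ is a $1$-cocycle on the whole space of long knots.
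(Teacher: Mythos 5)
Your plan is structurally reasonable but it contains a genuine gap, and it is precisely the step you yourself flag: the swallowtail vertex. You reduce cocyclicity to the vanishing of local contributions around the vertices of the graphic $\Sigma\subset[0,1]^2$, dispose of the easy vertices, and then for the swallowtail you say you would ``write down the local knot family explicitly\ldots and check that they sum to $0$''. That check is never performed, and nothing in the paper guarantees it can be carried out the way you describe: the tool you propose for shuffling the hump-slide corrections around --- the bimodule statement of Theorem~\ref{thm:bimod} --- is only established for \emph{loops} in the space of Morse knots, whereas your local contributions are sub-\emph{paths}. Commuting $\uZ(\infty)$ (or a hump-slide cost) past $Z^1(\mu)$ for an arbitrary path $\mu$ is exactly one of the open questions recorded in Appendix~\ref{Appen:2}, conjectured there to \emph{fail}; so the algebraic manipulations you intend to use at the swallowtail are not available, and the hardest cancellation in your scheme is left both unproved and resting on an unavailable commutation.

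The paper avoids the entire stratification analysis by a global move that your proposal misses: Mostovoy's hump-sliding replacement (\cite[Lemma~$3.1$]{MostovoyMorse}), already used to define $\hat{Z}^1$ on paths, works just as well for two-parameter families. Given a homotopy $H\colon \mu_1\rightsquigarrow\mu_2$ rel endpoints, one connects sufficiently many hump factors $\infty_0^{\hash k}$ on the left and mutates $H$ itself into a homotopy $\tilde{H}\colon\tilde{\mu}_1\rightsquigarrow\tilde{\mu}_2$ that stays entirely within the Morse stratum, every perestroika being traded coherently over the square for the sliding of a hump to or from the stock at $-\infty$. The theorem then follows in one line from the cocyclicity of $Z^1$ on Morse knots, with the corrective factor $\uZ(\infty_0)^{-c/2}$ constant throughout and kept on the left, so that no commutation, no vertex-by-vertex analysis, and in particular no swallowtail computation is needed. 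If you want to salvage your local approach, you would have to actually produce the swallowtail cancellation in $\A^1$ from the relations of Section~\ref{sec:A} without invoking Theorem~\ref{thm:bimod} on paths --- a nontrivial task the paper never undertakes because the mutation argument makes it unnecessary.
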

\begin{proof}
Given two paths $\mu_1$ and $\mu_2$, and an isotopy $H\colon \mu_1 \rightsquigarrow \mu_2$, after connecting sufficiently large numbers of humps $k_1$ and $k_2$ one can mutate $H$ into an isotopy $\tilde{H}\colon \tilde{\mu}_1 \rightsquigarrow \tilde{\mu}_2$ that stays within the set of Morse knots. The theorem follows then from the invariance of $Z^1$.
\end{proof}

\subsection{More functoriality}\label{sec:morefunct}
\subsubsection*{Connected sum of loops}
The connected sum of two loops $\gamma$ and $\gamma^\prime$ is defined by the loop $\phi\mapsto \gamma^{\phantom{\prime}}_\phi\hash \gamma^\prime_\phi$ after a rescaling to make the time scales match. It is obviously isotopic to the loop $(\gamma \hash K^\prime)\cdot (K \hash \gamma^\prime)$, whereby one can extend the results of Lemma~\ref{Lem:prefoncto}, given that the multiplicative correction from $Z^1$ to $\hat{Z}^1$ and from  $Z$ to $\hat{Z}$ are the same.
\begin{proposition}
Let $\gamma_1$ and $\gamma_2$ be any two loops in the space of knots, respectively in the knot types $K_1$ and $K_2$. Then $$\hat{Z}^1(\gamma_1\hash\gamma_2)=\hat{Z}^1(\gamma_1)\hat{Z}(K_2)+\hat{Z}(K_1)\hat{Z}^1(\gamma_2)$$
\end{proposition}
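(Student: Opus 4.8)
The plan is to reduce the simultaneous connected sum to a sequential one. As noted just above the statement, the loop $\gamma_1\hash\gamma_2$ is isotopic, within the space of knots, to the composite loop $(\gamma_1\hash K_2)\cdot(K_1\hash\gamma_2)$: one first runs $\gamma_1$ with a steady copy of $K_2$ fixed on the right, then runs $\gamma_2$ with a steady copy of $K_1$ fixed on the left. Since $\hat{Z}^1$ is a $1$-cocycle it takes equal values on isotopic loops, so the first step is to write
$$\hat{Z}^1(\gamma_1\hash\gamma_2)=\hat{Z}^1\bigl((\gamma_1\hash K_2)\cdot(K_1\hash\gamma_2)\bigr).$$

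Next I would split the right-hand side additively. Part \textsl{(a)} of Lemma~\ref{Lem:prefoncto} provides additivity of $Z^1$ along composable paths; it transfers to $\hat{Z}^1$ here because the whole composite loop lies in the single knot type $K_1\hash K_2$, so that after Morse-ification the number of critical points, and hence the correction factor $\uZ(\infty_0)^{-(c_1+c_2)/2}$, is the same throughout and factors uniformly out of the sum. This gives
$$\hat{Z}^1(\gamma_1\hash\gamma_2)=\hat{Z}^1(\gamma_1\hash K_2)+\hat{Z}^1(K_1\hash\gamma_2).$$

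It then remains to identify the two summands as products, by lifting part \textsl{(c)} of Lemma~\ref{Lem:prefoncto} to the corrected integral. At the uncorrected level, with the left (respectively right) steady factor taken trivial, the lemma reads $Z^1(\gamma_1\hash K_2)=Z^1(\gamma_1)\,Z(K_2)$ and $Z^1(K_1\hash\gamma_2)=Z(K_1)\,Z^1(\gamma_2)$, the products being well-defined in $\A^1$ by part \textsl{(b)}. Because the two corrections are the same---both the passage from $Z^1$ to $\hat{Z}^1$ and that from $Z$ to $\hat{Z}$ amount to dividing by $Z(\infty)^{(\cdot)/2}$ indexed by the number of critical points, which is additive under $\hash$---the factor $\uZ(\infty_0)^{-(c_1+c_2)/2}$ splits as $\uZ(\infty_0)^{-c_1/2}\uZ(\infty_0)^{-c_2/2}$, one power accompanying the moving loop and the other the steady factor, so that
$$\hat{Z}^1(\gamma_1\hash K_2)=\hat{Z}^1(\gamma_1)\hat{Z}(K_2),\qquad \hat{Z}^1(K_1\hash\gamma_2)=\hat{Z}(K_1)\hat{Z}^1(\gamma_2).$$
Adding these two identities yields the asserted Leibniz formula; conceptually, it says that on loops $\hat{Z}^1$ behaves as a derivation with respect to $\hash$, valued in the $\A^0$-bimodule $\A^1$, with $\hat{Z}$ playing the role of the accompanying algebra morphism.

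I expect the bookkeeping of the correction factors in the last step to be the main obstacle. Distributing the power $\uZ(\infty_0)^{-c_2/2}$ from the globally left-acting correction onto the right-hand steady factor means sliding it across $Z^1(\gamma_1)$ from the left action to the right action of the concatenation structure, i.e.\ invoking that this power of the grouplike hump is central in the relevant sense---exactly the centrality of $Z(\infty)$ used classically for the multiplicativity of the Kontsevich integral under connected sum. Since $\A^1$ is only a weak $\A^0$-bimodule, the classical $4$T relations holding there only in the restricted form of Theorem~\ref{thm:bimod}, I would take care to confirm that every product written above is among those shown to be well-defined by part \textsl{(b)} of the lemma, and that the weak relations of Theorem~\ref{thm:bimod} suffice to justify moving the hump correction past $Z^1(\gamma_1)$.
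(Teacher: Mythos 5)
Your proposal is correct and follows essentially the same route as the paper, which likewise isotopes $\gamma_1\hash\gamma_2$ to the composite loop $(\gamma_1\hash K_2)\cdot(K_1\hash\gamma_2)$ and then invokes Lemma~\ref{Lem:prefoncto} together with the observation that the multiplicative corrections from $Z^1$ to $\hat{Z}^1$ and from $Z$ to $\hat{Z}$ coincide. The commutation worry you flag at the end---sliding the hump correction past $Z^1(\gamma_1)$---is precisely what Theorem~\ref{thm:bimod} supplies for loops, so your argument closes exactly as the paper's does.
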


It was pointed out to us by Victoria Lebed that this makes $\hat{Z}^1$ a Hochschild $1$-cocycle, where $\A^1$ is seen as a module on the monoid algebra $A$ of loops under connected sum, acting via $\gamma\cdot x=\hat{Z}(K)x$, where $x\in \A^1$ and $\gamma$ is a loop in the knot type $K$.

\subsubsection*{The shadow $4$T relations}

Lemma~\ref{Lem:prefoncto} allows us to multiply $Z^1(\gamma)$ and $Z(K)$ within $\A^1$, where $\gamma$ is a Morse loop and $K$ a Morse knot. This a priori says nothing about other common procedures that require $4$T relations, such as division by $Z(K)$, commutativity of the product in $\A^0$, generalized $1$T relations, etc. We shall fix this issue by the following theorem, whose proof makes up the rest of this subsection.

\begin{theorem}\label{thm:bimod}
$\hat{Z}^1(H_1(\mathcal{K}\setminus \Sigma))\subset \A^1$ is an $\A^0$-bimodule. Moreover, for every $\gamma\in H_1(\mathcal{K}\setminus \Sigma)$ and every knot $K$,
$$\hat{Z}^1(\gamma)\hat{Z}(K)=\hat{Z}(K)\hat{Z}^1(\gamma)$$
\end{theorem}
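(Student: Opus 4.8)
The plan is to derive both claims from the functoriality identities of Lemma~\ref{Lem:prefoncto}, each time upgrading from $Z^1$ to $\hat Z^1$ by using that the multiplicative corrections $Z^1\to\hat Z^1$ and $Z\to\hat Z$ coincide. After the standard hump-completion $\gamma\rightsquigarrow\tilde\gamma$, any class in $H_1(\K\setminus\Sigma)$ is represented by a loop of Morse knots---for a loop the numbers of hump births and deaths agree, so $\tilde\gamma$ returns to its starting knot---and $\hat Z^1(\gamma)=\hat Z^1(\tilde\gamma)$; hence I may run every argument with loops of Morse knots.

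First I would set up the bimodule structure. The left and right actions of $\A^0$ come from concatenating a chord diagram onto a $V$-diagram; they commute by construction, and the mixed-associativity axiom is the corrected Lemma~\ref{Lem:prefoncto}(c), namely $\hat Z(K_1)\bigl(\hat Z^1(\gamma)\hat Z(K_2)\bigr)=\hat Z^1(K_1\hash\gamma\hash K_2)=\bigl(\hat Z(K_1)\hat Z^1(\gamma)\bigr)\hat Z(K_2)$. As recalled before the statement, the only obstruction is well-definedness: the classical $4$T relations are not imposed in $\A^1$, so replacing a representative $D$ of a class in $\A^0$ by $D+r$ with $r$ a $4$T relator could alter $D\cdot v$. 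The theorem thus reduces to the \emph{shadow $4$T relations}: every $4$T relator annihilates $\hat Z^1(H_1(\K\setminus\Sigma))$ on either side. Lemma~\ref{Lem:prefoncto}(b) supplies this for the relators $\uZ(K)-\uZ(\tilde K)$ attached to isotopic Morse knots, and I would then invoke the standard feature of the Kontsevich integral that such differences fill out the whole $4$T subspace of $\D^0/\{1\text{T}\}$---the $4$T relations being exactly what invariance of $\uZ\bmod 4\text{T}$ forces, each elementary relator appearing, in its lowest nonzero degree, as the jump of $\uZ$ across a single Morse isotopy. By linearity and grading this upgrades Lemma~\ref{Lem:prefoncto}(b) to $r\cdot v=v\cdot r=0$ for every $4$T relator, so the action descends. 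Closure is again Lemma~\ref{Lem:prefoncto}(c): $\hat Z(K)\hat Z^1(\gamma)=\hat Z^1(K\hash\gamma)$ lies in the image because $K\hash\gamma$ is again a loop, and since the group-like $\hat Z(K)$ topologically generate $\A^0$ while the image is a graded subspace, stability under all of $\A^0$ follows.

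For the commutation identity I would argue by sliding. Fix a Morse-loop representative $\tilde\gamma$ based at $L=\tilde\gamma(0)=\tilde\gamma(1)$ and a Morse knot $K$. By the corrected Lemma~\ref{Lem:prefoncto}(c), $\hat Z(K)\hat Z^1(\gamma)=\hat Z^1(K\hash\tilde\gamma)$ and $\hat Z^1(\gamma)\hat Z(K)=\hat Z^1(\tilde\gamma\hash K)$, so it suffices to equate these two loops. Sliding the steady factor $K$ along $\R$ from the left of $\tilde\gamma$ to its right, while $\tilde\gamma$ runs, is a homotopy through Morse knots whose basepoint sweeps a Morse path $\delta$ from $K\hash L$ to $L\hash K$; it exhibits $\tilde\gamma\hash K$ as homotopic rel endpoints to $\delta^{-1}\cdot(K\hash\tilde\gamma)\cdot\delta$. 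By the additivity and inversion of Lemma~\ref{Lem:prefoncto}(a), in the abelian group $\A^1$ the two $\delta$-contributions cancel, giving $Z^1(K\hash\tilde\gamma)=Z^1(\tilde\gamma\hash K)$; the slide preserves the number of critical points, whence $\hat Z^1(K\hash\tilde\gamma)=\hat Z^1(\tilde\gamma\hash K)$ and the stated commutation follows.

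The step I expect to be the main obstacle is the shadow $4$T relations---specifically, verifying that the differences $\uZ(K)-\uZ(\tilde K)$ exhaust the $4$T subspace \emph{degree by degree}, not merely sit inside it. One must isolate a prescribed elementary $4$T relator in a given degree as such a difference, modulo the already-imposed $1$T relations and modulo higher-degree terms (which are themselves $4$T combinations, absorbed by induction on the grading), so that Lemma~\ref{Lem:prefoncto}(b) can be read grade by grade. A subordinate point needing care is that every sliding path $\delta$ and every use of Lemma~\ref{Lem:prefoncto}(c) stays inside the space of Morse knots, so that $Z^1$ remains defined and no correction beyond the matching factor $\uZ(\infty_0)^{-c/2}$ is incurred.
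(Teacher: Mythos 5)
Your architecture coincides with the paper's: reduce to loops of Morse knots, obtain well-definedness of the two actions from Lemma~\ref{Lem:prefoncto}(b) plus the claim that the $4$T subspace is exhausted by Kontsevich jumps, and prove the commutation identity by sliding the steady factor from one side of the loop to the other---that last argument is exactly the paper's, and your version of it (writing $\tilde\gamma\hash K$ as homotopic to $\delta^{-1}\cdot(K\hash\tilde\gamma)\cdot\delta$ and cancelling the $\delta$-contributions by Lemma~\ref{Lem:prefoncto}(a)) is correct. The genuine gap is precisely the step you flag as the main obstacle, and it is the heart of the theorem, not a verification one can defer. Writing $I_m$ for the span of the degree-$m$ jumps $\uZ_m(K)-\uZ_m(\tilde{K})$ over isotopic Morse knots and $J_m$ for the span of degree-$m$ $4$T relators, Lemma~\ref{Lem:prefoncto}(b) and bilinearity give annihilation of $Z^1(\gamma)$ by $I_m$ on either side; what you need is $J_m\subseteq I_m$. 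This is \emph{not} a ``standard feature of the Kontsevich integral'': the standard fact is the easy opposite inclusion $I_m\subseteq J_m$ (invariance of $\uZ$ modulo $4$T). Your proposed realization---a prescribed elementary relator arising, in its lowest nonzero degree, as the jump across a single Morse isotopy---fails as stated: the Stokes analysis exhibits the jump as an integral, over the isotopy parameter, of $4$T combinations attached to \emph{all} applicable pairings, hence an uncontrolled mixture of relators within degree $m$; and since $K$ and $\tilde{K}$ must be isotopic (no crossing change is permitted in the definition of $I_m$), the usual singular-knot leading-term trick for isolating a prescribed diagram is unavailable. Moreover your ``induction on the grading'' is vacuous: $I_m$ and $J_m$ are homogeneous of degree $m$ by definition, so higher-degree terms never interfere---the entire difficulty sits inside the fixed degree $m$.

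The paper closes this gap by duality rather than by geometric realization. It proves the lemma $I_m=J_m$ as follows: a ``strict'' weight system, i.e.\ a functional on $\D^0_m/\left\lbrace 1\text{T};I_m\right\rbrace$, composed with $\uZ$ yields a genuine knot invariant, so the proof of \cite[Theorem 1]{BarNatanVKI} can be repeated verbatim, showing that every such functional is pulled back from $\D^0_m/\left\lbrace 1\text{T};J_m\right\rbrace$; the projection between these (degreewise finite-dimensional) quotients is therefore an isomorphism, i.e.\ $I_m=J_m$. In other words, the inclusion you left open is essentially as deep as the fundamental theorem of Vassiliev invariants, which is why the dual argument, not a direct construction of isotopies, is the effective tool; any rescue of your approach would need a localization and scaling argument you have not supplied. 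One further small point: the theorem only requires the two-sided action of $\A^0$ on $\hat{Z}^1(H_1(\mathcal{K}\setminus\Sigma))$ to be well-defined, so your additional closure step via the unproven claim that the elements $\hat{Z}(K)$ topologically generate $\A^0$ is unnecessary and should be dropped.
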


\begin{proof} Pick an integer $m>0$ and let $I_m$ denote the subspace of $\D^0/\left\lbrace 1\text{T}\right\rbrace$ spanned by all differences $\uZ_m(K)-\uZ_m(\tilde{K})$, where $K$ and $\tilde{K}$ are isotopic Morse knots and $\uZ_m$ stands for the part of $\uZ$ of degree exactly $m$. On the other hand, let $J_m$ denote the subspace of $\D^0/\left\lbrace 1\text{T}\right\rbrace$ spanned by all $4$T relators of degree $m$. The invariance of the Kontsevich integral for Morse knots can be stated as $I_m\subseteq J_m$.

We claim that for every $m>0$, 
$$I_m = J_m$$

\textit{Proof of the claim. }Considering a \enquote{strict} Kontsevich integral whose every $m$-th degree part lives in $\D_m^0/\left\lbrace 1\text{T}, I_m\right\rbrace$ rather than ${\D_m}^{\!\!\!\!0}/\left\lbrace 1\text{T}, J_m\right\rbrace$, one can repeat entirely the proof of \cite[Theorem $1$]{BarNatanVKI}, so that every \enquote{strict} weight system $w\colon  \D_m^0/\left\lbrace 1\text{T}, I_m\right\rbrace \rightarrow \C$ is the pull-back of a weight system in the usual sense by the projection $\D_m^0/\left\lbrace 1\text{T}, I_m\right\rbrace \twoheadrightarrow{\D_m}^{\!\!\!\!0}/\left\lbrace 1\text{T}, J_m\right\rbrace$. Hence this projection is an isomorphism. \qed
 
Together with Lemma~\ref{Lem:prefoncto}(b), this implies that if $\delta\in \D^0$ represents the trivial class in $\A^0$, then $\delta Z^1(\gamma)$ and $Z^1(\gamma)\delta$  both represent the trivial class in $\A^1$, where $\gamma$ is a loop in the space of Morse knots. Now since $\hat{Z}^1$ is essentially defined via such loops, we have the first part of the theorem.

For the second part, note that if a loop is connected on the right to a steady factor, this factor can be brought to the left before the loop starts, and brought back to the right afterwards. The resulting loop is isotopic to the original one.
\end{proof}

\section{Relation to Vassiliev $1$-cocycles}\label{sec:Vass}

Similarly to Vassiliev knot invariants, Vassiliev $1$-cocycles are $1$-cocycles whose Alexander duals have a finite depth in the resolved discriminant of the space of knots, and the degree corresponds to the maximal depth required to write down the dual (see Section~\ref{sec:VC}). Again, the chain of maximal depth is represented by a linear combination of diagrams called the principal part of the cocycle---see examples in Fig.~\ref{pic:TTfixed}. Unlike knot invariants, a Vassiliev $1$-cocycle does not have a unique or preferred principal part. However, a principal part still entirely determines a cocycle up to cocycles of lower degree---see Kontsevich's realization theorem, discussed in \cite[Section 4.2.1]{VassilievTT2}. Actual computations to navigate through the resolved discriminant can be found in \cite[Section 3]{VassilievCalc} and \cite[Section 3.3]{FT1cocycles}, respectively to derive a combinatorial formula for a $1$-cocycle given its principal part, and to derive a principal part (and hence show that a cocycle is of finite complexity) given a combinatorial formula.

\subsection{Weight systems of order $1$}\label{subsec:weight}
%
The origin of the $4$T relations lies in Birman--Lin's \cite{BirmanLin} clever rewriting of Vassiliev's equations describing the first level of his spectral sequence \cite{VassilievBook}, whose kernel is made of principal parts of Vassiliev invariants---see Section~\ref{sec:VC}. They show that only one kind of diagram carries essential information in a principal part, namely ordinary chord diagrams, and that after an innocuous change of variables, the equations that rule Vassiliev invariants take a very simple form freed from any local parameter. Combinations of chord diagrams that are solutions to these equations are known today as weight systems, and span the dual space to the target of the Kontsevich integral. The situation is exactly similar here.

We define a \textit{weight system of order $1$ (and degree $m$)} as a linear combination of $V$-diagrams of degree $m$ which, regarded as a functional $w\colon \D^1\rightarrow \C$ via the Kronecker pairing of diagrams, descends to a functional on $\A^1$. Two examples are presented in Figure~\ref{pic:TTfixed2}.

Let $D$ be a $V$-diagram. The \textit{sign} of $D$ is defined by $$S(D)=(-1)^{\sum_{\left\lbrace P,P^\prime\right\rbrace}\lk(P,P^\prime)} $$ where the sum runs over all pairs of either two chords of $D$, or one chord and the $V$. We define the involution $\sigma$ on $\D^1$ by $\sigma(D)=S(D)D$.

\begin{remark}
Topologically, this change of variable amounts to making the opposite choice of orientation for some of the cells in Vassiliev's resolved discriminant. The original orientations of the cells and incidence signs can be found in~\cite[Section $3.3$]{Vassiliev1990}. This is a natural generalization of the sign $S$ defined by Birman--Lin in \cite[p.241]{BirmanLin}, except that their version has an ingredient depending on the degree of $D$, which provides consistency across Vassiliev's \enquote	{actuality tables}. It would be harmless for our purposes to add this ingredient here, but we would gain nothing as we don't have actuality tables for $1$-cocycles---yet?
\end{remark}

\begin{theorem}\label{thm:Vass}
Let $\alpha$ be a Vassiliev $1$-cocycle of degree $m$. Then, after the change of variable $\sigma$, the projection of a principal part of $\alpha$ onto $\D^{1}$ is a weight system of order $1$ and degree $m$. 
\end{theorem}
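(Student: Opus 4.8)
The plan is to show that the projected, $\sigma$-transformed principal part vanishes on each defining relation of $\A^1$---the $1$T, $2$T, $16$T, $28$T and $\4$T relations---by identifying these relations, one family at a time, with the cocycle equations that the principal part of $\alpha$ must satisfy. The whole strategy is modelled on Birman--Lin~\cite{BirmanLin}: there, the equations governing the principal parts of order-$m$ Vassiliev \emph{invariants} become the $4$T relations after a sign change; here, the equations governing order-$1$ Vassiliev \emph{cocycles} of degree $m$ should become the relations of Section~\ref{sec:A} after the change of variable $\sigma$.

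First I would recall, following \cite[Chapter V]{VassilievBook}, the explicit incidence maps of the first term of Vassiliev's spectral sequence in cohomological degree $1$. The projection onto $\D^1$ of a principal part is a functional on the $(3,2,\ldots,2)$-configurations, i.e.\ on degree-$m$ $V$-diagrams. Being (the $\D^1$-part of) a cocycle means that this functional is annihilated by the incidence map coming from every codimension-one degeneration of a $(3,2,\ldots,2)$-configuration. By the Remark following the definition of $V^2$-diagrams, these degenerations are of exactly three kinds: internal degenerations that keep the configuration of type $(3,2,\ldots,2)$ (an ordinary chord becoming isolated, or colliding with the $V$), incidences with $(4,2,\ldots,2)$-configurations, and incidences with $(3,3,2,\ldots,2)$-configurations.

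The core of the proof is then a term-by-term matching. The internal degenerations yield the conditions dual to the $1$T and $2$T relations. The incidences with a $(4,2,\ldots,2)$-configuration occur when the triple point and one double point collide into a quadruple point; the incidence map records the $16$ spanning-tree desingularisations of the complete graph on four vertices, with the signs $(-1)^k\lk(P,P^\prime)$ of Notation~\ref{not:16trees}, so these cocycle equations are exactly the $16$T and $28$T relations of Figures~\ref{pic:16T}--\ref{pic:28T}. The incidences with a $(3,3,2,\ldots,2)$-configuration occur when a second triple point is created, producing the nine $V^2$-diagrams on two triples and the formal products of pairs of $4$T relators, i.e.\ the $\4$T relations. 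In each case I would check that conjugation by $\sigma$ absorbs the linking-number signs $\lk(P,P^\prime)$ carried natively by the Vassiliev incidence coefficients---just as $S$ does in \cite{BirmanLin}---and turns the raw incidence equation into the clean $\pm$-coefficient relation of the figure.

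The main obstacle is the sign bookkeeping. The incidence coefficients in \cite{VassilievBook} depend on chosen co-orientations of strata, on the simplicial ordering of the resolution, and on the factor $\zeta$ from \cite[Chapter V, Sections $3.3.4$, $3.3.6$]{VassilievBook}; one must verify that their product, after the change of variable $\sigma$, reproduces precisely the combination of $(-1)^k$ (label of the desingularised vertex) and $\lk(P,P^\prime)$ together with the lexicographic vertex-labelling conventions used in defining the $16$T, $28$T and $\4$T relations. The combinatorics of which strata border which is comparatively routine once the resolution is fixed; the delicate point is showing that all these signs conspire correctly, which is exactly what makes $\sigma$ the right change of variable. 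Once every cocycle equation is matched with a defining relation of $\A^1$, the $\sigma$-transformed projection annihilates all of them and hence descends to $\A^1$, which is precisely the assertion that it is a weight system of degree $m$.
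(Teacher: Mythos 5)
Your overall strategy---matching the relations of Section~\ref{sec:A} with Vassiliev's cocycle equations after the sign change $\sigma$, \`a la Birman--Lin---is the right one, and your identification of the three kinds of degenerations agrees with the paper. But there is a genuine gap at the core of your middle paragraph: you assert that the $\D^1$-projection of the principal part is annihilated by \emph{each} incidence equation, i.e.\ that ``these cocycle equations are exactly the $16$T and $28$T relations.'' This is false, and a dimension count already shows it: the principal part of a Vassiliev $1$-cocycle is not concentrated in $\Dm{1}$; it has components in $\Dmt{2}$ (diagrams obtained from $V^2$-diagrams by adding an extra chord) and in $\Dm{0,\star}$, and the incidence equation attached to each $\Dm{2}$-generator mixes the $\Dm{1}$-coefficients with the $\Dmt{2}$-coefficients. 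For a fixed family of $16$ tree-type $V^2$-diagrams one gets a $16\times(15+72)$ matrix $M$, whose $16$ rows are the raw cocycle equations; these cannot ``be exactly'' the six relations ($16$T and $28$T). The paper's proof hinges on an elimination step you are missing: the left block $M_1$ has rank $10$, so there are exactly six independent row combinations killing the $\Dmt{2}$-columns, and it is \emph{these combinations} (the kernel of $M_1^{\mathrm{T}}$), not the individual equations, that become the three $16$T and three $28$T relations after $\sigma$; likewise the $9\times(6+36)$ matrix for two triples yields four combinations, the $\4$T relations. One also needs the preliminary reduction discarding the $\Dm{0,\star}$-component modulo the image of the preceding differential, and---for the equations on $\Dm{1}$ to be the complete and unambiguous list---the verification that $\Ker M_1$ consists of boundaries and that any solution of the reduced equations extends to a full kernel element.

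Ironically, the elimination device you are missing is precisely the Birman--Lin move you invoke as your model: as the paper remarks, the $4$T relations themselves were first obtained by combining rows to eliminate non-essential variables in Vassiliev's spaces. Your final paragraph on sign bookkeeping ($\sigma$ absorbing the linking numbers, the co-orientation factor $\varepsilon\zeta$) is sound and corresponds to what the paper checks in Appendix~\ref{Appen:3}, but without the row-combination step the term-by-term matching you propose cannot get off the ground.
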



In other words, there is a natural (multivalued) map $$\V^1_m/\V^1_{m-1}\to \W^1_m$$
where $\V^1_{m}$ denotes the set of Vassiliev $1$-cocycles of degree $m$ and $\W^1_{m}$ the set of weight systems of order $1$ and degree $m$.  
We conjecture that $\hat{Z}^1$ provides an inverse map in the following sense.
\begin{conjecture}\label{conj}
For every $\C$-valued $1$-cocycle $\alpha$ of degree $m$ and corresponding weight system $w$, the $1$-cocycle $\left<w,\hat{Z}^1(\cdot)\right>$ is of finite complexity and differs from $\alpha$ by a $1$-cocycle of degree at most $m-1$.
\end{conjecture}

Since Vassiliev's $1$-cocycles and $\hat{Z}^1$ are ruled by the same algebraic structure, it is natural to expect that a given weight system will define the same cohomology class in both settings. The result holds and is well-known in the case of knot invariants, see \cite[Theorem $1$]{BarNatanVKI}. The rest of this subsection is devoted to the proof of Theorem~\ref{thm:Vass}.


\subsubsection*{Settings of the spectral sequence}
The linear map from Vassiliev's spectral sequence whose kernel consists of principal parts of $1$-cocycles of a given degree $m$ can be described as $$\Psi\colon \Dmt{2}\, \oplus \Dm{1}\oplus \Dm{0,\star}\rightarrow \Dm{2}\oplus \Dm{1,\star}\oplus \Dm{0,\star\star}$$
where 
\begin{itemize}
\item $\Dmt{2}$ is the vector space freely generated by the collection of diagrams that one can obtain by enhancing a $V^2$-diagram of degree $m$ with a chord between two points already indirectly (bigons are not allowed) linked by a $V$ or a $3$-edge tree.
\item $\Dm{0,\star}$ is generated by $m$-chord diagrams enhanced by a lonely star.
\item $\Dm{2}$ is the vector space freely generated by $V^2$-diagrams of degree $m$.
\item $\Dm{1,\star}$ is generated by two kinds of diagrams: $m$-chord diagrams with a star attached to the endpoint of a chord, and $V^1$-diagrams of degree $m$ with a lonely star in $\R$ (away from the chords).
\item $\Dm{0,\star\star}$ is generated by $m$-chord diagrams enhanced by two lonely stars.
\end{itemize}

Up to incidence signs defined in \cite[Section $3.3$]{Vassiliev1990} and simplified in a short note by the author \cite{MortierSimplification}, $\Psi$ maps a generator of $\Dmt{2}$ to the sum of all possible ways to remove a chord whose endpoints remain indirectly connected, and a generator of $\Dm{1}$ or $\Dm{0,\star}$ to the sum of all ways to shrink an admissible interval of $\R$ to a point, which becomes a star when the interval was initially bounded by the two endpoints of an isolated chord. \textit{Admissible }here means that the interval cannot contain the endpoint of a chord in its interior, and has to be bounded by either a star and the endpoint of a chord, or two endpoints of chords, in which case these cannot be the two tips of the $V$.

It is not difficult to see that modulo the image of the preceding map in the spectral sequence, any element in $\Ker\Psi$ has a representative that does not involve diagrams in $\Dm{0,\star}$. Hence we can consider the restriction $$\Psi\colon \Dmt{2}\, \oplus \Dm{1}\rightarrow \Dm{2}\oplus \Dm{1,\star}$$

\subsubsection*{$1$T and $2$T relations}
Considering the preimage of both kinds of generators of $\Dm{1,\star}$ shows respectively that the part in $\Dm{1}$ of any element of $\Ker\Psi$ has to satisfy $1$T and $2$T relations---note that these are unaffected by the change of variable $\sigma$. Assuming these relations, we are now left with a restriction
$$\Psi\colon \Dmt{2}\, \oplus \Dm{1}\rightarrow \Dm{2}$$

\subsubsection*{$16$T and $28$T relations}
We show here how the $16$T and $28$T relations arise from Vassiliev's linear maps. The key parts of the matrices are displayed in Appendix~\ref{Appen:3}.
To understand the equations coming from the generators of $\Dm{2}$ with a $3$-edge tree, we restrict our attention to $16$ such  $V^2$-diagrams that differ only by the way their trees' vertices are connected. The corresponding submatrix of $\Psi$ has: 
\begin{itemize}
\item $16$ rows, one for each diagram from Figure~\ref{pic:spantrees};
\item $15$ columns (say, on the left) corresponding to generators of $\Dmt{2}$;
\item $72$ columns (on the right) corresponding to $V$-diagrams.
\end{itemize}

Denote this $16\!\times\!87$ matrix by $M$ and its $16\!\times\!15$ left submatrix by $M_1$. First we observe that $M_1$ has rank $10$. This means that there are six independent ways to combine the rows of $M$ so as to end up with $15$ zeroes on the left. Denote by $M_2$ the $6\!\times \!72$ matrix on the right of these zeroes: it is the list of all $6$ equations that must be satisfied by the $\Dm{1}$-part of any element of $\Ker M$. It is now a general fact that we have a decomposition
$$\Ker M=\Ker M_1\oplus  E$$
where $E$ is a subspace of $\Ker M$ that is mapped isomorphically onto $\Ker M_2$ by the second projection $\Dmt{2}\, \oplus \Dm{1}\rightarrow \Dm{1}$. In other words, any solution to the six equations in $M_2$ will extend to a solution of the equations in $M$.

One easily checks that $\Ker M_1$ is generated by boundaries from the preceding map in the spectral sequence, so that we are left with the six equations from $M_2$. After the change of variable $\sigma$, they are exactly the three $16$T relations and the three $28$T relations.

\begin{remark}
This process of getting rid of non-essential variables in Vassiliev's spaces by row combinations was already used by Birman--Lin in \cite[Lemma $3.4$]{BirmanLin} to obtain the $4$T relations \cite[Equation $3.10$]{BirmanLin} for the first time.
\end{remark}

\subsubsection*{$\4$T relations}
Finally, to understand the equations coming from those generators of $\Dm{2}$ that have two $V$'s, we enhance an ordinary chord diagram with two full triangles and consider all $9$ \mbox{$V^2$-diagrams} obtained by removing one chord from each triangle. The corresponding submatrix of $\Psi$ has $9$ rows, $6$ $\Dmt{2}$-columns and $36$ $\Dm{1}$-columns. The previous arguments can be repeated and this time we obtain four equations in the end, which are exactly the $\4$T relations described in Subsection~\ref{sec:4x4}. Again the key $9\times 6$ matrix is displayed in Appendix~\ref{Appen:3}.

\subsection{Integration over the Gramain cycle}\label{sec:rot}

The Gramain cycle, denoted by $\rot(K)$, consists of rotating a long knot $K$ once around its axis. It is a loop consistently defined across all path-components of the space of knots, and therefore evaluating a $1$-cocycle on these loops yields an honest knot invariant. The branches of the knot can be parametrized by $$z_i(\phi, t)=z_i(0,t)e^{\sqrt{-1}\phi}\qquad \phi\in [0,2\pi]$$
so that the differential form $\omega_{ij}$ becomes $$\frac{\partial \log(z_i-z_j)|_{\phi=0}}{\partial t}dt + \sqrt{-1}d\phi $$

Integrating then with respect to $\phi$ results in pieces of the Kontsevich integral. Put together, these pieces form a Vassiliev invariant of degree one less than that of $w$, whose weight system can be derived directly from $w$. Namely, let $D$ be a $V$-diagram with $V$ given by $\{\{a,b\},\{b,c\}\}$ with $a<c$. If the interval $(-\infty, a)$ (respectively, $(c, \infty)$) contains no endpoints of any chords in $D$, define $\rho_{-}(D)$ (respectively, $\rho_{+}(D)$) to be the ordinary chord diagram obtained by removing the chord $\{a,b\}$ (respectively, $\{b,c\}$) from $D$. Otherwise set $\rho_{-}(D)$ (respectively, $\rho_{+}(D)$) to $0$. We set:
$$\rho:\D^1\to \D^0, \qquad D\mapsto\frac{\rho_{-}(D)-\rho_{+}(D)}{2}.$$

One can prove by the above argument that for any weight system of order $1$, say $w$, $\rho(w)$ is a weight system of order $0$ and one has $$w(\hat{Z}^1(\rot(K)))=\rho(w)(\hat{Z}(K))$$ 

For example, there are two principal parts of the Teiblum--Turchin cocycle in the literature, \cite[Formula ($10$)]{VassilievCalc} and \cite[Figure $4$]{VassilievTT2}  which we reproduce here in Fig.~\ref{pic:TTfixed} (with a typo fixed in the first one).
Applying Theorem~\ref{thm:Vass} to them yields weight systems $w_1$ and $w_2$ (see Fig.~\ref{pic:TTfixed2}), both of which evaluate on  $\hat{Z}^1(\rot(K))$ as the coefficient of \raisebox{-.15cm}{\includegraphics[scale= 0.3]{v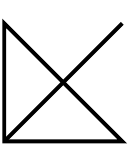}} in $\hat{Z}(K)$, that is the Casson invariant $v_2(K)$.
\begin{figure}[!ht]
\centering
\labellist
\small\hair 2pt
\pinlabel $+$ at 145 63
\pinlabel $+$ at 315 63
\pinlabel $-$ at 485 63
\pinlabel $-$ at 655 63

\endlabellist
\hspace{-3pt}
\includegraphics[scale=0.318]{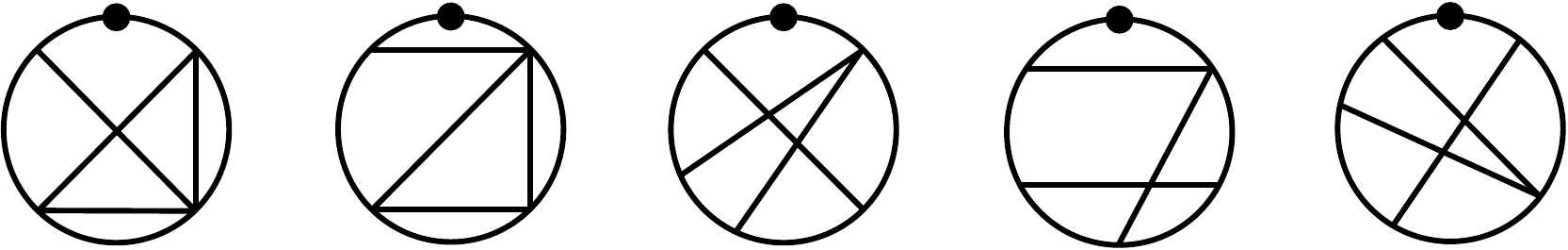}
\labellist
\small\hair 2pt
\pinlabel $-2$ at 140 63
\pinlabel $-$ at 315 63
\pinlabel $+$ at 485 63
\pinlabel $-$ at 145 203
\pinlabel $+$ at 315 203
\pinlabel $-$ at 485 203
\pinlabel $+$ at 655 203
\endlabellist
\hspace*{-2pt}
\includegraphics[scale=0.318]{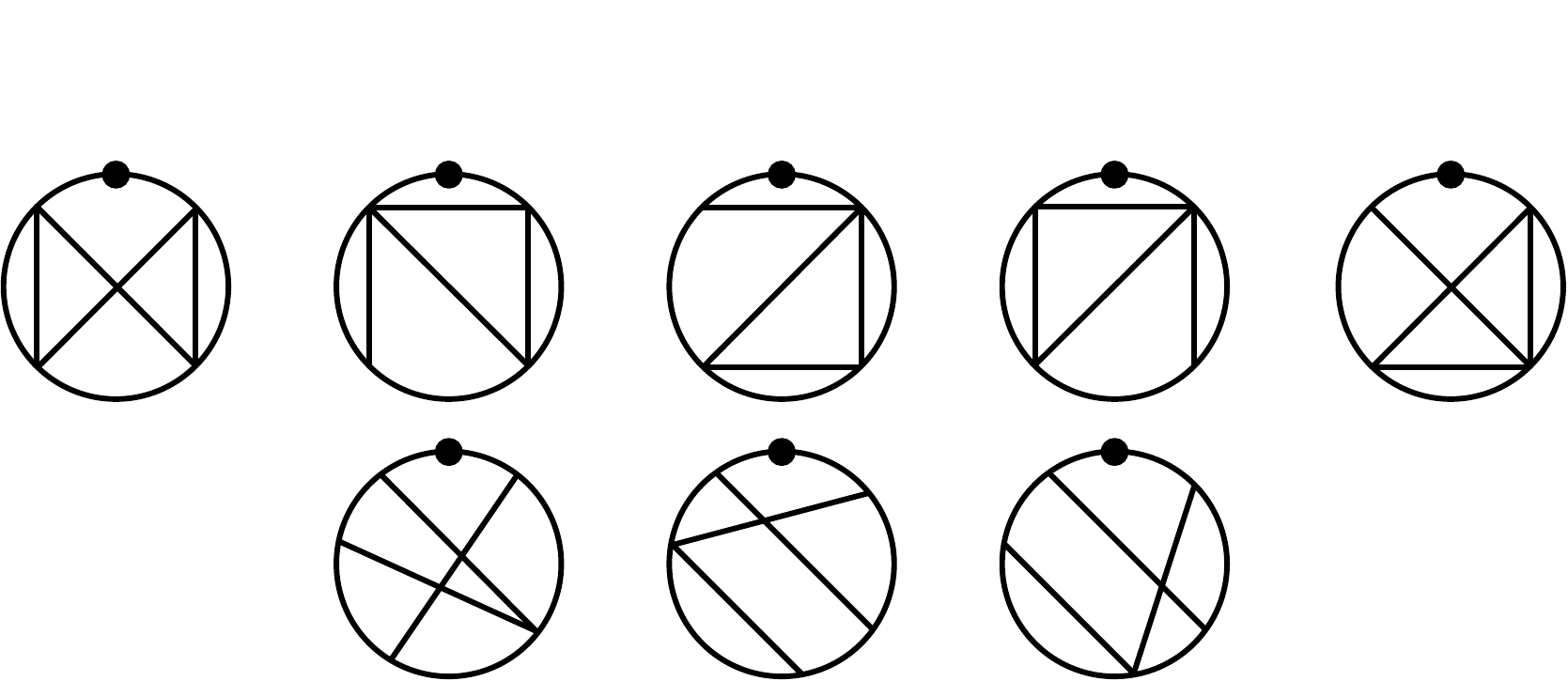}
\caption{Two principal parts of the Teiblum--Turchin $1$-cocycle, due to Vassiliev.\protect\footnotemark}
\label{pic:TTfixed}
\end{figure}

The appearance of the Casson invariant here is exactly what one would expect and brings further support to Conjecture~\ref{conj}. 
Indeed, if the conjecture holds then $w_1(\hat{Z}^1(\cdot))$ is the Teiblum--Turchin cocycle, and there are many reasons to believe that this cocycle evaluates on $\rot(K)$ into the Casson invariant: it was conjectured (and proved over $\Z_2$) by Turchin in \cite{Turchin}, and further support was given by the author in \cite{FT1cocycles, MortierCADS}.

\begin{figure}[!ht]
\centering
\labellist
\small\hair 2pt
\pinlabel $+$ at 145 63
\pinlabel $+$ at 315 63
\pinlabel $w_1=$ at -50 63
\endlabellist
\hspace{-2pt}
\includegraphics[scale=0.318]{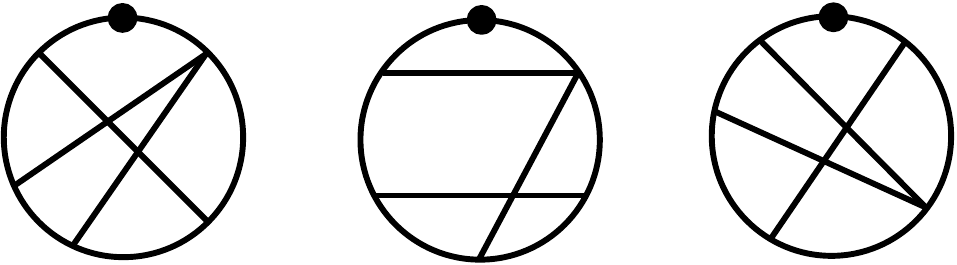}
\labellist
\small\hair 2pt
\pinlabel $w_2=2$ at -59 63
\pinlabel $-$ at 145 63
\pinlabel $+$ at 315 63
\endlabellist
\hspace*{3.3cm}
\includegraphics[scale=0.318]{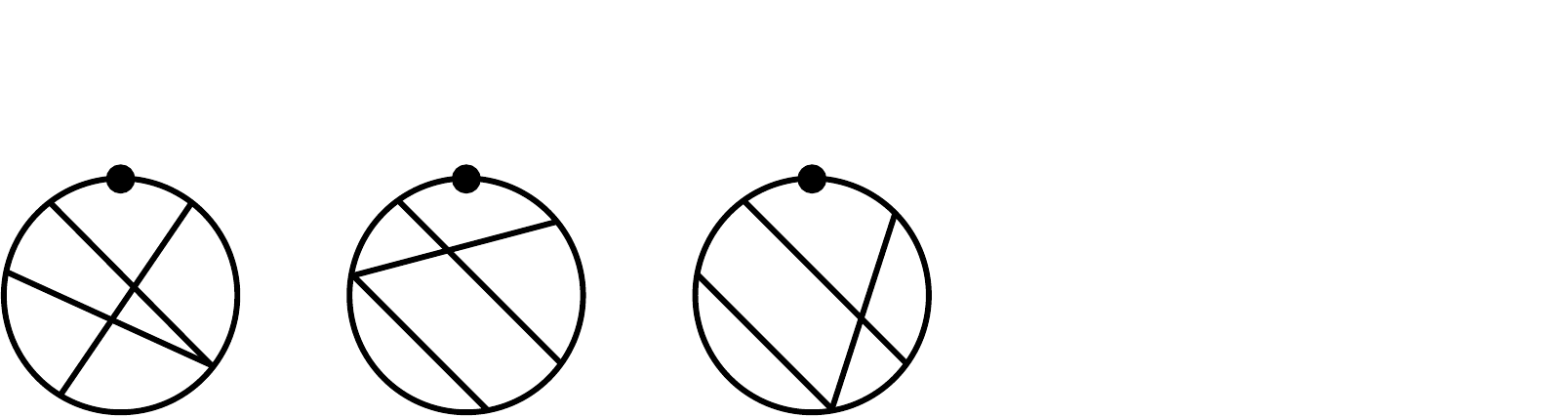}
\caption{Two weight systems of order $1$ and degree $3$ derived from the Teiblum--Turchin $1$-cocycle (Figure~\ref{pic:TTfixed}) as predicted by Theorem~\ref{thm:Vass}. One can see the $2$T relation at work in the last two diagrams of $w_2$.}
\label{pic:TTfixed2}
\end{figure}
\footnotetext{Vassiliev has confirmed in a personal communication that there was a typo in the earlier mentions of the first linear combination, where the second to last sign appeared as a $+$.}

\section{Open problems}\label{sec:open}
\subsection{How strong is $\hat{Z}^1$ and how to find weight systems of order $1$?}
To this day, only one example of a Vassiliev $1$-cocycle is known, namely the Teiblum--Turchin cocycle (see Section~\ref{sec:rot}). The present results could lead to systematic methods to find more. A brute-force attack would be possible, as the $1$T to $28$T relations are perhaps easier to encode than the original Vassiliev equations, however the size of the system of equations still grows extremely fast with the degree of the invariants.
Here are some possible ideas to try to circumvent this.

\begin{question}\label{quest:1}
Is there a non-trivial map $\theta:\D^1\to \D^0$ such that $$\theta(\Span\left\{1\text{T}, 2\text{T}, \4\text{T}, 16\text{T}, 28\text{T} \right\})\subseteq \Span\left\{1\text{T}, 4\text{T} \right\}?$$
\end{question}
If so, then the dual map $\theta^\star$ will send any weight system of order $0$ (of which there are many, see e.g. Bar-Natan's account in  \cite[Section 1.4]{BarNatanVKI}) to a weight system of order $1$. Naive attempts in this direction so far have only led to homologically trivial weight systems---that is, lying in the image of the map $\partial_\nabla$ from Section~\ref{sec:4Tin}.

One could make this question even more ambitious:
\begin{question}
Is there a map $\theta$ as in Question~\ref{quest:1} whose dual is a right inverse to the integration map $\rho:\D^1\to \D^0$ from Section~\ref{sec:rot}?
\end{question}
This would imply that not only there are infinitely many weight systems of order $1$, but $\hat{Z}^1(\rot (K))$ would already contain as much information about $K$ as $\hat{Z}(K)$ does, since every classical Vassiliev invariant would arise as the evaluation of some weight system of order $1$ on the Gramain cycle.

Finally, one remarkable way to produce weight systems of degree $0$ was found by Bar-Natan, taking as input Lie algebraic data, and where the $4$T relation translates to the Jacobi identity---see \cite[Section $2.4$ and Theorem $4$]{BarNatanVKI} and \cite{BN1995weights}.

\begin{problem}
Is there a way to derive weight systems from Lie algebras or possibly higher structures? If the $4\!$T relations relate to the Jacobi identity, what do the $16/28$T relations relate to?
\end{problem}

\subsection{Further structural properties of $\hat{Z}^1$.}
At this point we have exhibited strong evidence that our integral $\hat{Z}^1$ is related with Vassiliev $1$-cocycles, and Theorem~\ref{thm:Vass} can be seen as one half of a generalization of \cite[Theorem 1]{BarNatanVKI}. The other half would be

\begin{problem}[Conjecture~\ref{conj}]
Prove that a weight system evaluated on $\hat{Z}^1$ outputs a \emph{Vassiliev} $1$-cocycle, of the same degree, whose principal part is essentially the initial weight system.
\end{problem}


As a teaser to our last problem, let us mention a beautiful formula suggested to us by Faria Martins, who has foreseen from a categorical point of view that this simply ought to be true---and it is indeed. Let $\mu$ be a path in the space of Morse knots, from $K_0$ to $K_1$. Then
$$\partial Z^1(\mu)=\uZ(K_1)-\uZ(K_0)$$

where $\partial$ is the dual of Vassiliev's boundary map $\partial_V$ mentioned in Section~\ref{sec:4Tin}, up to the incidence signs. We already knew from the invariance of the Kontsevich integral that $\uZ(K_1)-\uZ(K_0)$ lies in the subspace spanned by the $4$T relators. It turns out that it is not just any combination of $4$T relators, it is the shadow of a higher invariant that keeps track of how exactly $K_0$ goes to $K_1$.

\begin{problem}
How far can one develop the functoriality properties of $Z$ and $Z^1$, in particular the categorical aspects related to Cirio--Faria Martins' \cite{CirioFaria}?
\end{problem}

%
%

%

\appendix
\section{Appendix: Some details in the proof of cocyclicity of $Z^1$}\label{Appen:1}

The fact that $Z^1$ vanishes on a stratum of type $t_j=t_{j+1}=t_{j+2}$ is proved by the identity $\Omega_n\wedge\Lambda_n-\Lambda_n\wedge\Omega_n=0$. Indeed, this stratum corresponds to an ordinary chord reaching the level of the $V$, which can occur from two directions with opposite incidence signs.

Now in the expansion of $\Omega_n\wedge\Lambda_n-\Lambda_n\wedge\Omega_n$, one can immediately discard the contributions where five strands are involved (because a ($\C$-valued) $2$-form will commute with a $1$-form and the chord diagrams also commute in this case), as well as those with only three strands involved (because the exterior product of the corresponding forms vanishes already).

One is left with the four-strand contributions, with diagrams that are desingularizations of spanning trees of a complete $4$-vertex graph. The following lemma is proved in \cite[Lemma $5.1$]{MortierSimplification}.

\begin{lemma}
Let $T$ be a tree with $p$ vertices, labelled from $1$ to $p$. The following differential form on $\C^p$, defined up to sign,
$$\omega_T=\bigwedge_{\text{all edges $\left\lbrace i, j\right\rbrace$ of }T} dz_i-dz_j$$
is equal (up to sign) to the form $$\omega_p=\sum_{i=1}^p (-1)^i dz_1\ldots \widehat{dz_i}\ldots dz_p$$
\end{lemma}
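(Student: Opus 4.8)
The plan is to identify both $\omega_T$ and $\omega_p$ as generators of a single one-dimensional space, and then to pin down the normalisation by one coefficient count. Let $v=\partial_{z_1}+\cdots+\partial_{z_p}$ be the diagonal vector field on $\C^p$. Every factor of $\omega_T$ satisfies $\iota_v(dz_i-dz_j)=0$, so the interior product $\iota_v\omega_T$ vanishes, being an alternating sum of terms each of which contracts one of the difference factors. On the other hand one checks directly that $\iota_v(dz_1\wedge\cdots\wedge dz_p)=\sum_{i=1}^p(-1)^{i-1}\,dz_1\wedge\cdots\wedge\widehat{dz_i}\wedge\cdots\wedge dz_p=-\omega_p$, whence $\iota_v\omega_p=-\iota_v\iota_v(dz_1\wedge\cdots\wedge dz_p)=0$ as well. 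Thus both forms lie in the kernel of $\iota_v$ on $(p-1)$-forms.

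Next I would show that this kernel is a line. Writing $V^\ast=\Span(dz_1,\dots,dz_p)$ and $W=\ker(\iota_v)\cap V^\ast$, the space $W$ of difference forms has dimension $p-1$; choosing any covector $e^\ast$ with $e^\ast(v)=1$ yields a splitting under which every $(p-1)$-form can be written $\alpha=\beta+e^\ast\wedge\gamma$ with $\beta\in\bigwedge^{p-1}W$ and $\gamma\in\bigwedge^{p-2}W$, and a short computation gives $\iota_v\alpha=\gamma$. Hence $\iota_v\alpha=0$ exactly when $\alpha\in\bigwedge^{p-1}W$, which is one-dimensional. Since $\omega_p\neq 0$ lies in it, it is a generator, and we obtain $\omega_T=\lambda\,\omega_p$ for some scalar $\lambda$.

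It then remains to see that $\lambda=\pm 1$, for which I would expand $\omega_T$ in the basis $\{dz_1\wedge\cdots\wedge\widehat{dz_m}\wedge\cdots\wedge dz_p\}_{m=1}^p$. In the expansion of $\bigwedge_{\{i,j\}}(dz_i-dz_j)$ one selects, for each edge, one of its two endpoints, and the monomial omitting $dz_m$ arises precisely from the selections that cover every vertex except $m$ exactly once. Rooting the tree at $m$ exhibits a unique such selection---each vertex $\neq m$ is matched to the first edge on its path to $m$---so the coefficient of every basis monomial is $\pm 1$; comparison with the coefficient $(-1)^m$ of $\omega_p$ forces $\lambda=\pm 1$. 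The same rooting remark reproves that the edge forms of a tree are linearly independent (a cycle would instead create cancelling selections), so $\omega_T\neq 0$; this is the only point where the tree hypothesis genuinely enters.

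The main obstacle I anticipate is entirely bookkeeping: confirming the uniqueness of the endpoint selection and checking that the induced signs are coherent as the omitted index $m$ varies. The contraction argument of the first two paragraphs is exactly what removes the need to track these relative signs by hand, since it already fixes the ratios of the coefficients; one is then left to verify only that a single coefficient has absolute value $1$, which the rooting count supplies.
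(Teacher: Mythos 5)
Your proof is correct. One caveat on the comparison: the paper does not actually prove this lemma in the text --- it states that the proof is in \cite{MortierSimplification} --- so there is no in-text argument to measure yours against, and it should be judged on its own terms, where it stands up. The two halves fit together as you intend: contraction with the diagonal field $v=\sum_i \partial_{z_i}$ kills each factor $dz_i-dz_j$, hence $\iota_v\omega_T=0$ by the antiderivation rule, and $\iota_v\omega_p=-\iota_v\iota_v(dz_1\wedge\cdots\wedge dz_p)=0$; your splitting $V^*=W\oplus\C e^*$ with $W=\ker(\iota_v)\cap V^*$ correctly computes $\iota_v(\beta+e^*\wedge\gamma)=\gamma$, so the kernel on constant-coefficient $(p-1)$-forms is the line $\bigwedge^{p-1}W$ (constancy of coefficients is what makes this purely linear-algebraic, and it holds here). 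The normalisation step is also sound: since $T$ has exactly $p-1$ edges, a nonvanishing monomial in the expansion of $\bigwedge_e(dz_{i(e)}-dz_{j(e)})$ is an injective choice of one endpoint per edge, hence a bijection from the edges onto $V\setminus\{m\}$ for some vertex $m$, and rooting at $m$ gives the unique such bijection (each edge selects its endpoint farther from $m$). The one point to spell out in a final write-up is the \emph{uniqueness} of that selection, which is where the tree hypothesis genuinely enters: if some edge selected its endpoint nearer to $m$, the $k$-vertex subtree beyond that edge would have to be covered by its $k-1$ internal edges alone, a pigeonhole contradiction; one sentence of this kind closes the only visible gap. With uniqueness in hand, each coefficient of $\omega_T$ is exactly $\pm1$, and comparison with the coefficient $(-1)^m$ of $\omega_p$ forces $\lambda=\pm1$ as you say. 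Your structure --- rigidity from $\iota_v$ first, then a single coefficient count --- is an efficient division of labour, since a direct expansion would require checking sign coherence across all $p$ choices of omitted vertex, which your contraction argument makes unnecessary.
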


It follows that, in every contribution involving four strands, say $z_1$, $z_2$, $z_3$, $z_4$, one can set aside as an overall factor the form $$\frac{\omega_4}{(z_1-z_2)(z_1-z_3)(z_1-z_4)(z_2-z_3)(z_2-z_4)(z_3-z_4)}$$
and every summand in what remains is the product of a polynomial of degree $3$ in the variables $z_i$ with some $V$-diagram on $n$-strands.

There are $20$ monomials of degree $3$ in four variables, but those of the form $z_i^3$ never contribute, since it would mean that $z_i$ is not involved in some denominator, a contradiction with $T$ being a tree.

So the identity $\Omega_n\wedge\Lambda_n-\Lambda_n\wedge\Omega_n=0$ is equivalent to the vanishing of $16$ combinations of $V$-diagrams. The corresponding $16\!\times\!72$ matrix has rank $6$ and the same kernel as the matrix $M_2$ from Subsection~\ref{subsec:weight}---so both sets of rows span the same space, which means that $\Omega_n\wedge\Lambda_n-\Lambda_n\wedge\Omega_n=0$ is equivalent to the $16$T and $28$T relations.

The study of strata \enquote{$t_j=t_{j+1} < t_k=t_{k+1}$} is similar using the $\4$T relations.

\begin{remark}
Using the Arnold identity \eqref{Arnold}, the $2$-form $\Lambda_n$ can be rewritten up to a constant factor as the following, summed over all $1\leq i <j<k\leq  n$: $$ \left(\, \protect\lmbn\,\right)\omega_{ij}\wedge\omega_{jk} \,\,\,+\,\,\, \left(\, \protect\lmbnn\,\right)\omega_{ij}\wedge\omega_{ik}$$

Hence, the theory of Cirio--Faria Martins~\cite{CirioFaria} can be applied: using the left action $\smalltriangleright$ of chord diagrams on $n$ strands on $V$-diagrams on $n$-strands given by $a\smalltriangleright b=ab-ba$ and the couple $(A,B)=(\Omega_n, \Lambda_n)$, the $2$-curvature of $(A,B)$ is exactly $\Omega_n\wedge\Lambda_n-\Lambda_n\wedge\Omega_n$, while the $16$T and $28$T relations are equivalent to the six relations from \cite[Theorem $10$]{CirioFaria}. The first result discussed in this appendix can therefore be regarded as a particular case of this theorem.
\end{remark}

\section{Appendix: Key matrices in Vassiliev's spectral sequence}\label{Appen:3}

\setcounter{MaxMatrixCols}{30}
We give here the left submatrices from Subsection~\ref{subsec:weight} which are the key to find the $16$T, $28$T and $\4$T relations in the Vassiliev settings. The right submatrices are too large to be displayed here but can be computed easily using a simplification of Vassiliev's incidence signs by the author in \cite[Theorem $4.1$]{MortierSimplification}. Here is the $9\!\times\! 6$ matrix from Paragraph $\4$T relations.
\[\begin{array}{c||c|c|c||c|c|c|}
&\includegraphics[ scale=0.2]{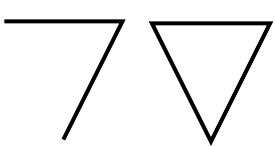}&\includegraphics[ scale=0.2]{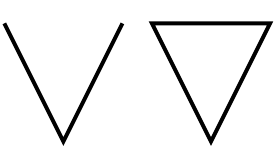}&\includegraphics[ scale=0.2]{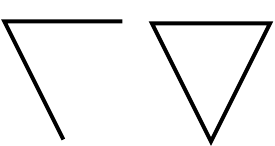}&\includegraphics[ scale=0.2]{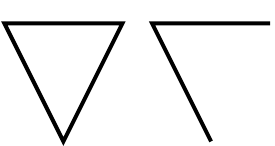}&\includegraphics[ scale=0.2]{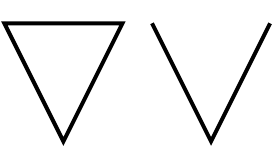}&\includegraphics[ scale=0.2]{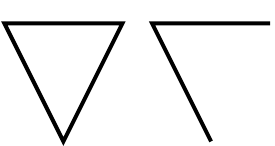}\\\hline \hline 
\raisebox{-0.1cm}{\includegraphics[ scale=0.2]{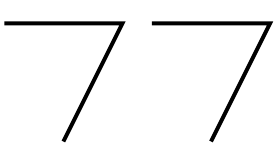}}&+&&&+&&\\\hline 
\raisebox{-0.1cm}{\includegraphics[ scale=0.2]{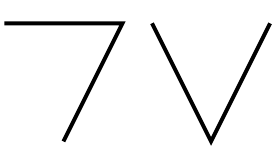}}&-&&&&+&\\\hline
\raisebox{-0.1cm}{\includegraphics[ scale=0.2]{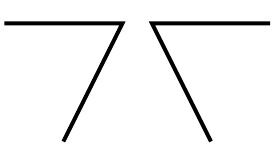}}&+&&&&&+\\\hline\hline
\raisebox{-0.1cm}{\includegraphics[ scale=0.2]{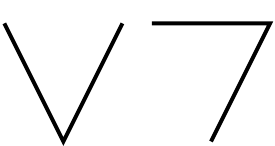}}&&+&&-&&\\\hline 
\raisebox{-0.1cm}{\includegraphics[ scale=0.2]{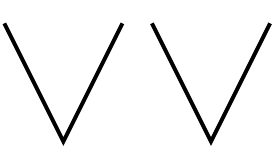}}&&-&&&-&\\\hline
\raisebox{-0.1cm}{\includegraphics[ scale=0.2]{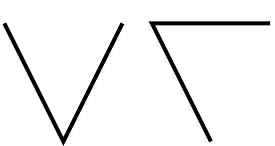}}&&+&&&&-\\\hline\hline
\raisebox{-0.1cm}{\includegraphics[ scale=0.2]{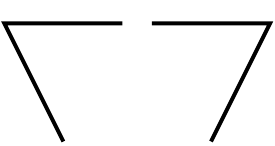}}&&&+&+&&\\\hline
\raisebox{-0.1cm}{\includegraphics[ scale=0.2]{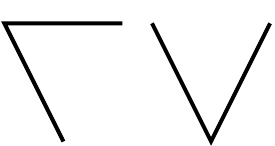}}&&&-&&+&\\\hline 
\raisebox{-0.1cm}{\includegraphics[ scale=0.2]{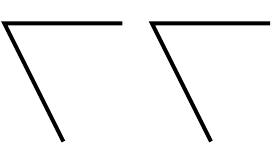}}&&&+&&&+\\\hline
\end{array}\]

Its kernel is $1$-dimensional generated by the boundary of the diagram \raisebox{-0.1em}{\includegraphics[ scale=0.2]{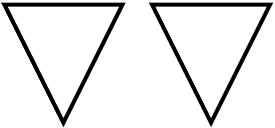}}, so it does not contribute to the homology. The kernel of its transpose, however, is generated by the following, which yield the $\4$T relations.
\[\begin{matrix}
(&1&1&\cdot&&1&1&\cdot&&\cdot&\cdot&\cdot&)\\(&\cdot&1&1&&\cdot&1&1&&\cdot&\cdot&\cdot&)\\(&\cdot&\cdot&\cdot&&1&1&\cdot&&1&1&\cdot&)\\(&\cdot&\cdot&\cdot&&\cdot&1&1&&\cdot&1&1&)
\end{matrix}\]

Now below is the $16\!\times\!15$ matrix $M_1$ from Subsection~\ref{subsec:weight}, Paragraph $16$T and $28$T relations. The incidence signs are $(-1)^{j-1}$ where $j$ is the label of the removed chord, with the chords labeled from $1$ to $4$ lexicographically according to the ordering of the vertices: {\raisebox{-1em}{\labellist
\small\hair 2pt
\pinlabel $1$ at 5 90
\pinlabel $2$ at 5 5
\pinlabel $3$ at 95 5
\pinlabel $4$ at 95 90
\endlabellist \includegraphics[height=2.5em]{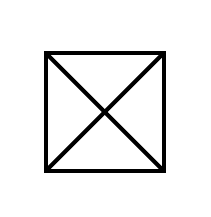}}}

\[\begin{array}{c||c|c|c||c|c|c||c|c|c||c|c|c||c|c|c|}
&\includegraphics[ scale=0.2]{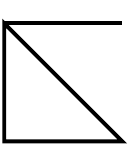}&\includegraphics[ scale=0.2]{2}&\includegraphics[ scale=0.2]{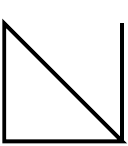}&\includegraphics[ scale=0.2]{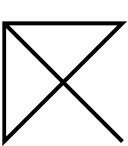}&\includegraphics[ scale=0.2]{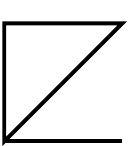}&\includegraphics[ scale=0.2]{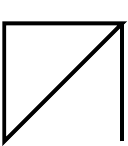}&\includegraphics[ scale=0.2]{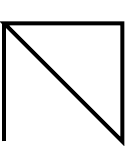}&\includegraphics[ scale=0.2]{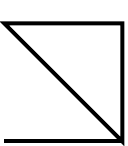}&\includegraphics[ scale=0.2]{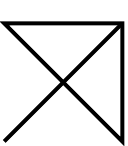}&\includegraphics[ scale=0.2]{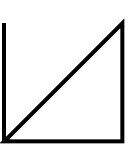}&\includegraphics[ scale=0.2]{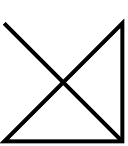}&\includegraphics[ scale=0.2]{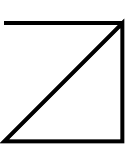}&\includegraphics[ scale=0.2]{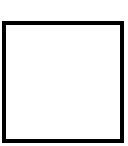}&\includegraphics[ scale=0.2]{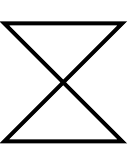}&\includegraphics[ scale=0.2]{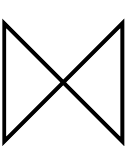}\\\hline \hline 
\raisebox{-0.1cm}{\includegraphics[ scale=0.2]{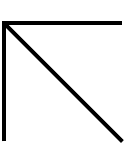}}&-&&&-&&&-&&&&&&&&\\\hline 
\raisebox{-0.1cm}{\includegraphics[ scale=0.2]{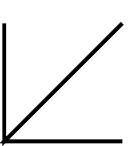}}&&-&&&-&&&&&-&&&&&\\\hline
\raisebox{-0.1cm}{\includegraphics[ scale=0.2]{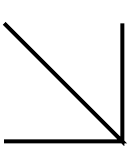}}&&&+&&&&&-&&&+&&&&\\\hline
\raisebox{-0.1cm}{\includegraphics[ scale=0.2]{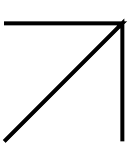}}&&&&&&+&&&+&&&-&&&\\\hline\hline 
\raisebox{-0.1cm}{\includegraphics[ scale=0.2]{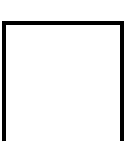}}&&&&&&+&-&&&&&&+&&\\\hline
\raisebox{-0.1cm}{\includegraphics[ scale=0.2]{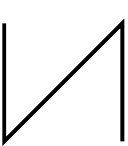}}&&&&&&-&&&&-&&&&&-\\\hline
\raisebox{-0.1cm}{\includegraphics[ scale=0.2]{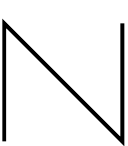}}&&&+&&&&+&&&&&&&&+\\\hline
\raisebox{-0.1cm}{\includegraphics[ scale=0.2]{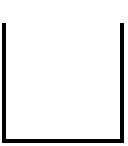}}&&&-&&&&&&&+&&&-&&\\\hline\hline 
\raisebox{-0.1cm}{\includegraphics[ scale=0.2]{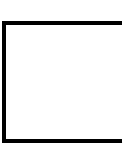}}&-&&&&-&&&&&&&&-&&\\\hline
\raisebox{-0.1cm}{\includegraphics[ scale=0.2]{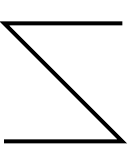}}&+&&&&&&&-&&&&&&-&\\\hline
\raisebox{-0.1cm}{\includegraphics[ scale=0.2]{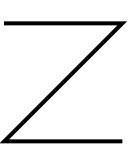}}&&&&&+&&&&&&&-&&+&\\\hline
\raisebox{-0.1cm}{\includegraphics[ scale=0.2]{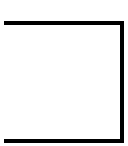}}&&&&&&&&+&&&&+&+&&\\\hline\hline 
\raisebox{-0.1cm}{\includegraphics[ scale=0.2]{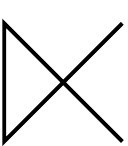}}&&+&&+&&&&&&&&&&&-\\\hline
\raisebox{-0.1cm}{\includegraphics[ scale=0.2]{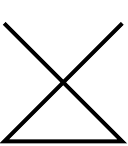}}&&+&&&&&&&&&-&&&-&\\\hline
\raisebox{-0.1cm}{\includegraphics[ scale=0.2]{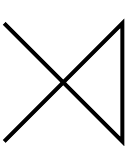}}&&&&&&&&&-&&-&&&&+\\\hline
\raisebox{-0.1cm}{\includegraphics[ scale=0.2]{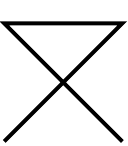}}&&&&+&&&&&-&&&&&+&\\\hline
\end{array}\]

One can see that $\Ker M_1$ is generated by any five of the boundaries of the diagrams \raisebox{-0.1em}{\includegraphics[ scale=0.2]{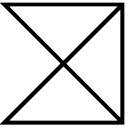}}, \raisebox{-0.1em}{\includegraphics[ scale=0.2]{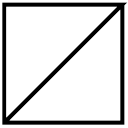}}, \raisebox{-0.1em}{\includegraphics[ scale=0.2]{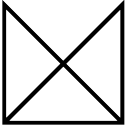}}, \raisebox{-0.1em}{\includegraphics[ scale=0.2]{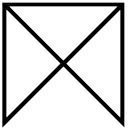}}, \raisebox{-0.1em}{\includegraphics[ scale=0.2]{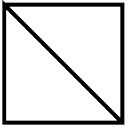}}, \raisebox{-0.1em}{\includegraphics[ scale=0.2]{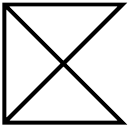}} by the preceding map in the spectral sequence, so it does not contribute to the homology.

On the other hand, $\Ker M_1^\text{T}$ is generated by the six following vectors.
\[\begin{matrix}
(&\cdot&\cdot&\cdot&\cdot&&1&1&1&1&&\cdot&\cdot&\cdot&\cdot&&\cdot&\cdot&\cdot&\cdot&)\\(&\cdot&\cdot&\cdot&\cdot&&\cdot&\cdot&\cdot&\cdot&&1&1&1&1&&\cdot&\cdot&\cdot&\cdot&)\\(&\cdot&\cdot&\cdot&\cdot&&\cdot&\cdot&\cdot&\cdot&&\cdot&\cdot&\cdot&\cdot&&1&-1&1&-1&)\\(&1&1&\cdot&\cdot&&\cdot&\cdot&1&1&&-1&\cdot&\cdot&\cdot&&1&\cdot&\cdot&\cdot&)\\(&\cdot&1&1&\cdot&&\cdot&\cdot&\cdot&1&&\cdot&\cdot&1&1&&\cdot&1&\cdot&\cdot&)\\(&\cdot&\cdot&1&1&&\cdot&1&\cdot&1&&\cdot&\cdot&\cdot&1&&\cdot&\cdot&1&\cdot&)
\end{matrix}\]

After the change of variable $\sigma$, Vassiliev's incidence signs coincide with the signs defining our compact variables in Notation~\ref{not:16trees}, up to Vassiliev's $\varepsilon\zeta$ which is $-1$ exactly in the case of\,  \raisebox{-0.1em}{\includegraphics[scale=0.2]{I}}\,, \raisebox{-0.1em}{\includegraphics[scale=0.2]{J}} and \raisebox{-0.1em}{\includegraphics[scale=0.2]{P}} (see \cite[Theorem $3.1$, Example $4.3$]{MortierSimplification}. After changing the signs accordingly in Columns $8$, $9$ and $16$ in the six vectors above, one recovers the $16$T and $28$T relations as expected.

\bibliographystyle{plain}
\bibliography{bibli}

\begin{thebibliography}{10}

\bibitem{BarNatanVKI}
D~Bar-Natan.
\newblock On the {V}assiliev knot invariants.
\newblock {\em Topology}, 34(2):423--472, 1995.

\bibitem{bar1995vassiliev}
D~Bar-Natan.
\newblock Vassiliev homotopy string link invariants.
\newblock {\em Journal of Knot Theory and its Ramifications}, 4(01):13--32,
  1995.

\bibitem{BN1995weights}
D~Bar-Natan.
\newblock Weights of {F}eynman diagrams and the {V}assiliev knot invariants.
\newblock 1995.

\bibitem{BirmanLin}
J~S Birman and X-S Lin.
\newblock Knot polynomials and {V}assiliev's invariants.
\newblock {\em Invent. Math.}, 111(2):225--270, 1993.

\bibitem{BottTaubes}
R~Bott and C~Taubes.
\newblock On the self-linking of knots.
\newblock {\em J. Math. Phys.}, 35(10):5247--5287, 1994.
\newblock Topology and physics.

\bibitem{BudneyHomotopyType}
R~Budney.
\newblock Topology of knot spaces in dimension 3.
\newblock {\em Proc. Lond. Math. Soc. (3)}, 101(2):477--496, 2010.

\bibitem{BudneyHomology}
R~Budney and F~Cohen.
\newblock On the homology of the space of knots.
\newblock {\em Geom. Topol.}, 13(1):99--139, 2009.

\bibitem{CattaneoCottaLongoni}
A~S Cattaneo, P~Cotta-Ramusino, and R~Longoni.
\newblock Configuration spaces and {V}assiliev classes in any dimension.
\newblock {\em Algebr. Geom. Topol.}, 2:949--1000, 2002.

\bibitem{chen1977}
K-T Chen.
\newblock Iterated path integrals.
\newblock {\em Bull. Amer. Math. Soc.}, 83(5):831--879, 1977.

\bibitem{CDM}
S~Chmutov, S~Duzhin, and J~Mostovoy.
\newblock {\em Introduction to {V}assiliev knot invariants}.
\newblock Cambridge University Press, Cambridge, 2012.

\bibitem{CirioFaria}
L~S Cirio and J~Faria~Martins.
\newblock Categorifying the {K}nizhnik-{Z}amolodchikov connection.
\newblock {\em Differential Geom. Appl.}, 30(3):238--261, 2012.

\bibitem{DrinfeldQHKZ}
V~G Drinfeld.
\newblock Quasi-{H}opf algebras and {K}nizhnik-{Z}amolodchikov equations.
\newblock In {\em Problems of Modern Quantum Field Theory}, pages 1--13.
  Springer Berlin Heidelberg, 1989.

\bibitem{drinfeld1990quasitri}
V~G Drinfeld.
\newblock On quasitriangular quasi-{H}opf algebras and on a group that is
  closely connected with {$\operatorname{Gal}(\mathbb Q/\mathbb Q)$}.
\newblock {\em Algebra i Analiz}, 2(4):149--181, 1990.

\bibitem{FiedlerQuantum1cocycles}
T~{Fiedler}.
\newblock {Quantum one-cocycles for knots (v2)}.
\newblock {\em ArXiv Mathematics e-prints}, April 2013.

\bibitem{FiedlerSingularization}
T~{Fiedler}.
\newblock {Singularization of knots and closed braids}.
\newblock {\em ArXiv e-prints}, May 2014.

\bibitem{FiedlerPolyCocy}
T~{Fiedler}.
\newblock {Knot polynomials from $1$-cocycles}.
\newblock {\em ArXiv e-prints}, September 2017.

\bibitem{HatcherTopologicalmoduli}
A~{Hatcher}.
\newblock {Spaces of Knots}.
\newblock {\em ArXiv Mathematics e-prints}, September 1999.

\bibitem{kohno1987monodromy}
T~Kohno.
\newblock Monodromy representations of braid groups and {Y}ang-{B}axter
  equations.
\newblock In {\em Annales de l'institut Fourier}, volume~37, pages 139--160,
  1987.

\bibitem{KohnoCFT}
T~Kohno.
\newblock {\em Conformal Field Theory and Topology}, volume 210 of {\em
  Translations of Mathematical Monographs, Iwanami series in modern
  mathematics}.
\newblock Amer. Math. Soc., Providence, RI, 2002.

\bibitem{Kontsevich}
M~Kontsevich.
\newblock Vassiliev's knot invariants.
\newblock In {\em {I}. {M}. {G}el'fand {S}eminar}, volume~16 of {\em Adv.
  Soviet Math.}, pages 137--150. Amer. Math. Soc., Providence, 1993.

\bibitem{labastida1998kontsevich}
JMF Labastida and E~Perez.
\newblock Kontsevich integral for {V}assiliev invariants from {C}hern--{S}imons
  perturbation theory in the light-cone gauge.
\newblock {\em Journal of Mathematical Physics}, 39(10):5183--5198, 1998.

\bibitem{LescopK}
C~Lescop.
\newblock {Introduction to the Kontsevich integral of framed tangles}.
\newblock CNRS Institut Fourier preprint (pdf file available online at
  http://www-fourier.ujf-grenoble.fr/~lescop/publi.html), 2000.

\bibitem{lescop2002configuration}
C~Lescop.
\newblock On configuration space integrals for links.
\newblock {\em Geometry \& Topology Monographs}, 4:183--199, 2002.

\bibitem{Longoni}
R~Longoni.
\newblock Nontrivial classes in
  ${H}^*(\operatorname{Imb}(\mathbb{S}^1,\mathbb{R}^n))$ from nontrivalent
  graph cocycles.
\newblock {\em International Journal of Geometric Methods in Modern Physics},
  01(05):639--650, 2004.

\bibitem{MortierCADS}
A~Mortier.
\newblock Combinatorial cohomology of the space of long knots.
\newblock {\em Algebr. Geom. Topol.}, 15(6):3435--3465, 2015.

\bibitem{FT1cocycles}
A~Mortier.
\newblock Finite-type 1-cocycles of knots given by {P}olyak-{V}iro formulas.
\newblock {\em J. Knot Theory Ramifications}, 24(10):1540004, 30, 2015.

\bibitem{MortierSimplification}
A~Mortier.
\newblock A simplification in {V}assiliev's spectral sequence.
\newblock {\em Journal of Knot Theory and Its Ramifications}, 27(01):1850008,
  2018.

\bibitem{MostovoyMorse}
J~Mostovoy and T~Stanford.
\newblock On a map from pure braids to knots.
\newblock {\em J. Knot Theory Ramifications}, 12(3):417--425, 2003.

\bibitem{papadima}
\c{S} Papadima.
\newblock The universal finite-type invariant for braids, with integer
  coefficients.
\newblock {\em Topology and Its Applications - TOPOL APPL}, 118:169--185, 2002.

\bibitem{PelattSinha}
K~Pelatt and D~Sinha.
\newblock {\em A geometric homology representative in the space of knots},
  pages 167--188.
\newblock 01 2017.

\bibitem{Sakai2}
K~Sakai.
\newblock Nontrivalent graph cocycle and cohomology of the long knot space.
\newblock {\em Algebr. Geom. Topol.}, 8(3):1499--1522, 2008.

\bibitem{Sakai1}
K~Sakai.
\newblock An integral expression of the first nontrivial one-cocycle of the
  space of long knots in {$\Bbb R^3$}.
\newblock {\em Pacific J. Math.}, 250(2):407--419, 2011.

\bibitem{Turchin}
V~Turchin.
\newblock Computation of the first nontrivial $1$--cocycle in the space of long
  knots.
\newblock {\em Mat. Zametki translation in Math. Notes}, 80(1):101--108, 2006.

\bibitem{Vassiliev1990}
V~A Vassiliev.
\newblock Cohomology of knot spaces.
\newblock In {\em Theory of singularities and its applications}, volume~1 of
  {\em Adv. Soviet Math.}, pages 23--69. Amer. Math. Soc., Providence, RI,
  1990.

\bibitem{VassilievBook}
V~A Vassiliev.
\newblock {\em Complements of discriminants of smooth maps: topology and
  applications}, volume~98 of {\em Translations of Mathematical Monographs}.
\newblock Amer. Math. Soc., Providence, RI, 1992.
\newblock Translated from Russian by B. Goldfarb.

\bibitem{VassilievTT2}
V~A Vassiliev.
\newblock Topology of two-connected graphs and homology of spaces of knots.
\newblock In {\em Differential and symplectic topology of knots and curves},
  volume 190 of {\em Amer. Math. Soc. Transl. Ser. 2}, pages 253--286. Amer.
  Math. Soc., Providence, RI, 1999.

\bibitem{VassilievCalc}
V~A Vassiliev.
\newblock On combinatorial formulas for cohomology of spaces of knots.
\newblock {\em Moscow Mathematical Journal}, 1, 2014.

\end{thebibliography}

\end{document}